\documentclass[a4paper,10pt]{article}
\usepackage{graphicx,amsmath,amsfonts,amssymb,amsthm,color,
framed, placeins %,stmaryrd
}

\usepackage{bbm}
\usepackage[font={small,it}]{caption}

\newcommand{\dint}{\mathrm{d}}

%english versions
\newtheorem{thm}{Theorem}[section]

\newtheorem{prop}[thm]{Proposition}
\newtheorem{assumption}{Assumption}[section]
\newtheorem{rem}[thm]{Remark}
\numberwithin{equation}{section}

%opening
\title{Exact simulation of the first-passage time of diffusions}
\begin{document}
\author{S. Herrmann$^1$ and C. Zucca$^2$\\[5pt]
\small{$^1$Institut de Math{\'e}matiques de Bourgogne (IMB) - UMR 5584, CNRS,}\\
\small{Universit{\'e} de Bourgogne Franche-Comt\'e, F-21000 Dijon, France} \\
\small{Samuel.Herrmann@u-bourgogne.fr}\\[5pt]
\small {$^2$Department of Mathematics 'G. Peano', }\\
\small{University of Torino, Via Carlo Alberto 10,
10123 Turin, Italy,}\\ 
\small{cristina.zucca@unito.it}
}

 \date{\today}
\maketitle

\begin{abstract}  Since diffusion processes arise in so many different fields, efficient technics for the simulation of sample paths, like discretization schemes, represent crucial tools in applied probability. Such methods permit to obtain approximations of the first-passage times as a by-product. For efficiency reasons, it is particularly challenging to simulate directly this hitting time by avoiding to construct the whole paths. In the Brownian case, the distribution of the first-passage time is explicitly known and can be easily used for simulation purposes. The authors introduce a new rejection sampling algorithm which permits to perform an exact simulation of the first-passage time for general one-dimensional diffusion processes. The efficiency of the method, which is essentially based on Girsanov's transformation, is described through theoretical results and numerical examples.
\end{abstract}
\textbf{Key words and phrases:} first-passage time, Brownian motion, diffusion processes, Girsanov's transformation, exact simulation, randomized algorithm.\par\medskip

\noindent \textbf{2010 AMS subject classifications:} primary 65C05 ;
secondary:  65N75, 60G40.\par\medskip

\section*{Introduction} 
First-passage times of diffusion processes appear in several fields, i.e. economics \cite{Hu2012}, mathematical finance \cite{Janssen, Linetsky}, neuroscience \cite{Burkitt-2006,Sacerdote-2013}, physics \cite{Redner}, psychology \cite{Navarro} and reliability theory \cite{Pieper}.
In neuroscience particular diffusion processes model the evolution of the membrane potential and the first-passage times of the considered process throught a suitable boundary describes the neuronal firing times (cf. \cite{Burkitt-2006,Sacerdote-2013}). Likewise, in metrology or in quality control, diffusion processes are used to describe the error of some instrument which is required to be bounded. The first-passage time through a suitable boundary reproduces the first time the error becomes out of control \cite{ZucTav}. In this setting, rapid detection of anomalies corresponds to recognize the optimal stopping time of a diffusion process \cite{ZucTavPes}. The importance of practical applications dealing with the first passage time motivated many mathematical studies on the subject. 

To introduce the corresponding mathematical problem, let us consider the stochastic process $(X_t,\ t\ge 0)$ the solution of the SDE:
\[
dX_t=b(X_t)dt+\sigma(X_t)dB_t,\quad X_0=x\in\mathbb{R},
\]
where $(B_t,\ t\ge 0)$ stands for the standard one-dimensional Brownian motion. We denote by $\tau_L$ the first time the diffusion reaches the level $L$:
\begin{equation}\label{eq:def:tau}
\tau_L:=\inf\{t\ge 0: \ X_t=L  \}.
\end{equation}
The study of the hitting times $\tau_L$  and of its approximations for general diffusion processes is an active area of research.

Explicit expressions or even asymptotic solutions to boundary crossing problems are only available in rare situations which moreover do not correspond to interesting instances for applications. Consequently
%As the cases when closed form solutions or asymptotic solutions are available
%are rare and do not correspond to instances of interest for applications, 
studies on this subject have been directed to the development of alternative methods like numerical or simulative ones. 

%The aim of this paper is to emphasize an efficient method for simulating the first-passage time of diffusions. In particular,

As far as numerical methods are concerned, they are efficient but need specific algorithms for each function of the hitting time $\tau_L$. 
Most of the studies focus on  the probability density function $p$ defined by $p(t)dt=\mathbb{P}(\tau_L\in dt)$
which satisfies Volterra-type integral equations. It suffices therefore to approximate certain integral. %or differential equations by a series expansion method. 
Examples of studies  in this direction are Durbin \cite{Durbin-1985,Durbin-1992}, Ferebee \cite{Ferebee-1983}, Ricciardi et al. \cite{Ricciardi-al-1984}; Giorno et al. \cite{Giorno-al-1989},
Sacerdote and Tomassetti \cite{Sacerdote-Tomassetti-1996}. The numerical approach proposed in \cite{Buonocore-1987} seems to be particularly efficient. This type of approach is also used in \cite{Benedetto2013, SacTambZuc} to deal with first passage times of a two-dimensional correlated diffusion process.
For particular families of diffusion which can be constructed using simple transformations of a Brownian motion, P\"{o}tzelberger and Wang \cite{Wang-Potzelberger-1997,Wang-Potzelberger-2001,Wang-Potzelberger-2007} proposed a different approach based on the explicit Brownian crossing probabilities. 

The second important course of action uses simulations to determine the hitting times $\tau_L$. These methods often require a strong computational effort and are affected by a non negligible approximation error. Since stopping times for time discrete Markov processes can be quite easily simulated, a natural way to deal with diffusion processes is to introduce a discretization scheme for the corresponding stochastic differential equation. However, only an upper bound of the stopping time can be determined through this approximation. It is therefore  important to improve the algorithm. In \cite{Broadie-Glasserman-Kou-1997} and \cite{Gobet-Menozzi-10} a shift of the boundary is proposed to improve the approximation, while in \cite{Giraudo-Sacerdote-1998,Giraudo-Sacerdote-1999, Giraudo-al-2001} the proposed method compute on each small interval of the time discretization, the probability for the process to cross the boundary in this time window. Such a method needs precise asymptotics of hitting probabilities for pinned diffusions.  Gobet \cite{Gobet-2000} exteded the study to a multidimensional diffusion and described the error for both the discrete and the continuous Euler schemes. %first proposed a way to freeze the coefficients of the diffusion on each small time interval in order to obtain precise asymptotics of the crossing probabilities. 
%Nevertheless it has been pointed out, by the numerical treatment of some precise examples, that such approximations are not satisfactory: see Giraudo and Sacerdote \cite{Giraudo-Sacerdote-1999} (O.U. process and Feller model), who also suggest some formulas for the computation of the crossing probability, see also \cite{Giraudo-al-2001}. 
Precise asymptotics for general pinned diffusions are pointed out by Baldi and Caramellino \cite{Baldi-Caramellino-2002} permitting to deal with a larger class of diffusions. 

\noindent Let us just notice that besides the simulated approximation methods there also exist simulated exact methods dealing with the sample of diffusion paths on a fixed time interval, 
%where the main challenge is to control and reduce the approximation error as far as possible. A pillar in this direction is the approach introduced by 
 first introduced by Beskos \& Roberts \cite{beskos2005exact}. Several modifications of this algorithm have been proposed \cite{Beskos-2006, Beskos-2008, Jenkins}.
However, the two main approaches are not exclusive: recently Herrmann and Tanr\'e \cite{Herrmann-Tanre-2016} proposed a new simulation method which  has no link with the two approaches just described. It is based on an iterative algorithm and leads to an efficient approximation of Brownian hitting times for curved boundaries, and, by extension, can deal with diffusion processes expressed as functionals of a Brownian motion.

This paper is a contribution in the framework of the simulated exact methods. Here we present an efficient method for simulating directly the first-passage time $\tau_L$ without building the whole paths of the diffusion process and without any approximation error. The method can be summarized as follows: we need an algorithm producing a random variable $Y$ such that $Y$ and $\tau_L$ are identically distributed. In particular, this approach permits to reduce the approximation error of $\mathbb{E}[\psi(\tau_L)]$, where $\psi:\mathbb{R}\to \mathbb{R}$ is any measurable and bounded function, to the only Monte-Carlo one. Our approach rests on two important facts: it is easy to simulate the first-passage time of the standard Brownian motion on one hand and Girsanov's transformation allows to link the Brownian paths to any diffusion paths on the other hand. Combining these features, we highlight the following  acceptance-rejection algorithm (A0):
\begin{align*}
\begin{array}{l}
\mbox{Step 1. Simulate}\ T_L \ \mbox{the first hitting time of the Brownian motion} \\
\mbox{Step 2. If the event } A\  \mbox{is true, then} \ Y= T_L\ \mbox{otherwise go to Step 1.} 
\end{array}
\end{align*} 
The first challenge is to find an appropriate event $A$. Subsequently we present alternatives in order to reduce the averaged number of steps of this acceptance/rejection algorithm. Our main idea originates in two different mathematical studies; let us now focus our attention on them.
\begin{itemize}
\item Ichiba \& Kardaras \cite{ichiba2011efficient} proposed an efficient estimation of the first-passage density using on one hand Girsanov's transformation and on the second hand a Monte Carlo procedure in order to estimate the Radon-Nikodym derivative which involves a Bessel bridge of dimension $\delta=3$. This interesting approach permits to deal with a large family of diffusion processes but introduces a Monte-Carlo approximation error.
\item   The second pillar is the procedure of exact simulation introduced by Beskos \& Roberts \cite{beskos2005exact}. Using a judicious rejection sampling algorithm, they simulate in a surprisingly simple way a diffusion process on any finite time interval. They propose also to use their algorithm in order to simulate extremes and hitting times by building a skeleton of the whole trajectory.  
\end{itemize}
The algorithm proposed here, based on these two pillars,  outperform \cite{beskos2005exact} when the objective is the simulation of $\tau_L$ instead of the whole diffusion paths.\\

The material is organized as follows: after reminding the famous Girsanov transformation, we present in details the rejection sampling algorithm in Section \ref{sec:1} and prove that the outcome $Y$ has the expected distribution. In Section \ref{sec:efficiency}, we focus our attention on the efficiency of the procedure: the number of steps is described and upper-bounds are computed. Unfortunately our procedure  can not be applied to any diffusion processes since it requests different technical assumptions.  We discuss in Section \ref{sec:before} and Section \ref{sec:unbounded} two modifications of the main algorithm in order to weaken these assumptions. Finally, for completeness, numerical examples permit to appreciate and illustrate the efficiency of the method (Section \ref{sec:numerics}).

\section{Rejection sampling method for first-passage time of diffusions}\label{sec:1}
Let us first consider $(X_t,\ t\ge 0)$ the solution of the following stochastic differential equation:
\begin{equation}\label{eq:0}
dX_t=b(X_t)dt+\sigma(X_t)dB_t,\quad X_0=x\in\mathbb{R},
\end{equation}
where $(B_t,\ t\ge 0)$ stands for the standard one-dimensional Brownian motion. We denote by $\tau_L$ the first time the diffusion reaches the level $L$ as defined in \eqref{eq:def:tau}. It suffices to assume the existence of a unique weak solution to the equation, see for instance \cite{K-S} for the corresponding conditions.  In order to simplify the presentation of all algorithms, we restrict our study to the constant diffusion case: $\sigma(x)\equiv 1$. Using the \emph{Lamperti transform},  we observe that this restriction is not sharp at all. Indeed if $X$ is solution to \eqref{eq:0} then by fixing $x\in\mathbb{R}$, we define
\[
Y_t=\eta(X_t):=\int_x^{X_t}\frac{1}{\sigma(u)}\,\dint u
\] 
which satisfies therefore the equation: $dY_t=b_0(Y_t)\,dt+dB_t$ with \[
b_0(x)=\frac{b\circ\eta^{-1}(x)}{\sigma\circ \eta^{-1}(x)}-\frac{1}{2}\, \sigma'\circ\eta^{-1}(x).
\]
That is why, from now on, $X$ stands for the unique solution of 
\begin{equation}
\label{eq:simple-sde}
dX_t=b(X_t)dt+dB_t, \quad t\ge 0, \quad X_0=x.
\end{equation}
and the level to reach satisfies $L>x$. Let us note that, for the particular Brownian case: $b(x)\equiv 0$, the first-passage time is extremely simple to simulate since $\tau_L$ and $(L-x)^2/G^2$ have the same distribution where $G$ is a standard gaussian random variable.  The aim is to use this simple form when considering the general diffusion case and the important tool for such a strategy is Girsanov's formula. We first recall it before introducing the main rejection algorithm.

\subsection{Girsanov's transformation}
 Let us first introduce $3$ important expressions related to the drift term $b$ : 
\begin{equation}\label{eq:def:3}
\beta(x)=\int_0^x b(y)\dint y,\quad  p(x)=\int_0^x e^{-\beta(y)}\,\dint y\quad \mbox{and} \quad \gamma:=\frac{b^2+b'}{2}. 
\end{equation}
\begin{assumption} \label{assu-1} The drift term $b\in\mathcal{C}^1(]-\infty, L])$. %and $\displaystyle\lim_{x\to-\infty}p(x)=-\infty$. 
\end{assumption}
Grisanov's transformation permits to link the diffusion process $X$ solution of \eqref{eq:simple-sde} to the Brownian motion (a slight modification of Proposition 2.1 in \cite{ichiba2011efficient}).
\begin{prop} \label{prop:Girsanov} Under Assumption \ref{assu-1}, for any bounded  measurable function $\psi:\mathbb{R}\to\mathbb{R}$, we obtain
\begin{equation}\label{prop:eq:1}
\mathbb{E}_\mathbb{P}[\psi(\tau_L)1_{\{ \tau_L<\infty \}}]=\mathbb{E}_\mathbb{Q}[\psi(\tau_L)\eta(\tau_L)]\exp \Big\{\beta(L)-\beta(x)\Big\},
\end{equation}
where $\mathbb{P}$ (resp. $\mathbb{Q}$) corresponds to the distribution of the diffusion $X$ (resp. the Brownian motion $B$) and
\begin{equation}
\label{prop:eq:2}
\eta(t):=\mathbb{E}\Big[ \exp - \int_0^t \gamma(L-R_s)\dint s\Big | R_t=L-x\Big].
\end{equation}
Here $(R_t, \, t\ge 0)$ stands for a $3$-dimensional Bessel process with $R_0=0$.
\end{prop}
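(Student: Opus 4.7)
The plan is to combine Girsanov's theorem with a Williams-type time reversal. First, I would view $\mathbb{P}$ and $\mathbb{Q}$ as two measures on the canonical path space with $X_0=x$; under $\mathbb{Q}$ the coordinate process is a Brownian motion and for fixed $t$,
$$\frac{d\mathbb{P}}{d\mathbb{Q}}\Big|_{\mathcal{F}_t}=\exp\Big(\int_0^t b(X_s)\,dX_s-\tfrac12\int_0^t b^2(X_s)\,ds\Big).$$
By Assumption~\ref{assu-1}, Itô's formula applied to $\beta(X_t)$ under $\mathbb{Q}$ yields $\int_0^t b(X_s)\,dX_s=\beta(X_t)-\beta(x)-\tfrac12\int_0^t b'(X_s)\,ds$, so the density rewrites as $\exp\bigl(\beta(X_t)-\beta(x)-\int_0^t\gamma(X_s)\,ds\bigr)$ with $\gamma=(b^2+b')/2$ as in \eqref{eq:def:3}.

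Next, applying this identity at the truncated stopping time $\tau_L\wedge n$ and letting $n\to\infty$, I would use $X_{\tau_L}=L$ on $\{\tau_L<\infty\}$ to pull out the deterministic factor $e^{\beta(L)-\beta(x)}$. Since under $\mathbb{Q}$ the Brownian motion reaches $L>x$ almost surely, the indicator disappears on the $\mathbb{Q}$-side and one obtains
$$\mathbb{E}_\mathbb{P}[\psi(\tau_L)1_{\{\tau_L<\infty\}}]=e^{\beta(L)-\beta(x)}\,\mathbb{E}_\mathbb{Q}\Big[\psi(\tau_L)\exp\Big(-\int_0^{\tau_L}\gamma(X_s)\,ds\Big)\Big].$$

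For the final step I would invoke Williams' time-reversal identity: conditionally on $\tau_L=t$, the reversed path $R_u:=L-X_{t-u}$ for $u\in[0,t]$ has the law of a $3$-dimensional Bessel bridge from $0$ to $L-x$ over $[0,t]$. The change of variable $u=t-s$ then transforms $\int_0^t\gamma(X_s)\,ds$ into $\int_0^t\gamma(L-R_u)\,du$, so conditioning on $\tau_L$ identifies the inner $\mathbb{Q}$-expectation with $\eta(\tau_L)$, which is exactly \eqref{prop:eq:1}.

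The main obstacle is the justification of Girsanov at the stopping time $\tau_L$, which may be infinite under $\mathbb{P}$ but is a.s.\ finite under $\mathbb{Q}$: Novikov's condition need not hold on the infinite horizon, so one must localize via $\tau_L\wedge n$ and pass to the limit by monotone/dominated convergence, the requisite regularity of $\gamma$ being supplied by Assumption~\ref{assu-1}. Once this technicality is settled, both the Itô reduction and the Bessel-bridge time-reversal are standard.
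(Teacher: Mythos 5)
Your argument is correct and follows essentially the same route as the paper: Girsanov's change of measure, It\^o's formula applied to $\beta$ to replace the stochastic integral by $\beta(B_{\tau_L})-\beta(x)-\int_0^{\tau_L}\gamma(B_s)\,ds$, recurrence of Brownian motion under $\mathbb{Q}$ to set $B_{\tau_L}=L$, and Williams' time reversal identifying the conditioned path with a $3$-dimensional Bessel bridge, which yields $\eta(\tau_L)$. The only difference is presentational: where you justify the identity at the random time by localizing at $\tau_L\wedge n$ and passing to the limit, the paper invokes a stopping-time version of Girsanov's theorem directly (Proposition 1.7.5.4 in \cite{Jeanblanc-2009}) together with the Feller test to rule out explosion before $\tau_L$ under $\mathbb{P}$.
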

\begin{proof}%[Sketch of proof] 
Let us denote $\zeta$ the explosion time of the diffusion $X$ defined by \eqref{eq:simple-sde}. The Feller test for explosion (see for instance Proposition 5.22 in \cite{K-S}) claims that Assumption \ref{assu-1} leads to $\tau_L<\zeta$, $\mathbb{P}$-almost surely. By applying successively Girsanov's transformation (Proposition 1.7.5.4 in \cite{Jeanblanc-2009}) and  It\^o's formula, we obtain
\begin{align*}
\mathbb{E}_\mathbb{P}[\psi(\tau_L)1_{\{ \tau_L<\infty  \}}]&=\mathbb{E}_\mathbb{Q}\Big[ \psi(\tau_L)\exp\left( \int_0^{\tau_L} b(B_s)\dint B_s-\frac{1}{2}\int_0^{\tau_L} b^2(B_s)\dint s \right) \Big]\\
&=\mathbb{E}_\mathbb{Q}\Big[ \psi(\tau_L)\exp\left( \beta(B_{\tau_L})-\beta(x)-\int_0^{\tau_L} \gamma(B_s)\dint s \right) \Big].
\end{align*}
Under $\mathbb{Q}$, $(B_t)$ corresponds to a one-dimensional Brownian motion with starting point $x$ and $\tau_L=\inf\{t\ge 0:\ B_t=L  \}<\infty$ a.s.  (recurrence of the Brownian paths). Hence $B_{\tau_L}=L$ and consequently
\begin{align*}
\mathbb{E}_\mathbb{P}[\psi(\tau_L)1_{\{ \tau_L<\infty  \}}]&=e^{\beta(L)-\beta(x)}\mathbb{E}_\mathbb{Q}\Big[ \psi(\tau_L)\exp -\int_0^{\tau_L} \gamma(B_s)\dint s\Big]\\
&=e^{\beta(L)-\beta(x)}\mathbb{E}_\mathbb{Q}\Big[ \psi(\tau_L)\eta(\tau_L)\Big],
\end{align*}
where 
\[
\eta(t):=\mathbb{E}_{\mathbb{Q}}\Big[\exp -\int_0^{t} \gamma(B_s)\dint s\Big| \tau_L=t\Big].
\]
In order to conclude it suffices to note that, given $\{ \tau_L=t \}$, $R_s:= L-B_{t-s}$ is a Bessel bridge \cite{Williams-1974} for $s\in[0,t]$.
\end{proof}

%Let us note that, under the probability $\mathbb{Q}$, the FPT $\tau_L$ is easy to simulate. Indeed the Brownian first passage time of level $L$ has the same distribution as the random variable $L^2/G^2$ where $G$ is a standard Gaussian r.v. The simulation therefore becomes straightforward providing that the function $\eta$ is well-known. This is not the case in practice, that is why we need an algorithm in order to deal with $\eta$.

\subsection{Description of the algorithm}
The rejection sampling algorithm for the simulation of $\tau_L$ is based on Proposition \ref{prop:Girsanov}. We  need to introduce the following 
\begin{assumption}\label{assu-2}
The function $\gamma$ defined by \eqref{eq:def:3} is non-negative and the first-passage time  satisfies $\tau_L<\infty$ a.s.
\end{assumption}
These main conditions permit the function $\eta$ defined by \eqref{prop:eq:2} to belong to the interval $[0,1]$ and therefore  to represent a probability of rejection in the procedure. 
\begin{rem}\label{rem}
Let us just note that $\tau_L<\infty$ a.s. if the drift term satisfies one of the additional conditions: either $\displaystyle\lim_{x\to-\infty}p(x)=-\infty$ (see \eqref{eq:def:3} for the definition of $p$) or $\displaystyle \lim_{x\to-\infty}v(x)<\infty$ where
\begin{equation}\label{eq:cond:add} 
v(x)=\int_0^x\int_0^y \frac{2\,p'(y)}{p'(z)} \ \dint z\, \dint y,
\end{equation}
according to the Feller test for explosion (Proposition 5.5.32 in \cite{K-S}).
\end{rem}
Let us now focus our attention to a first theoretical description of the acceptance-rejection algorithm.

\begin{framed}\emph{
\noindent {\bf\sc Algorithm (A0).} 
\\[3pt]
Let us fix the level $L>x$.\\
{\bf\rm  Step 1:} Simulate a non-negative random variable $T$ with p.d.f. $f_T$. \\[3pt]
{\bf\rm Step 2:} Simulate a $3$-dimensional Bessel process $(R_t)$ on the time interval $[0,T]$ with endpoint $R_T=L-x$. We define by $D_{R,T}$ the stochastic domain:
\[
D_{R,T}:=\Big\{(t,v)\in[0,T]\times\mathbb{R}_+:\ v\le \gamma(L-R_t)  \Big\}.
\]
This domain depends on both random elements $(R_t)$ and $T$.\\[3pt]
{\bf\rm Step 3:} Simulate a Poisson point process $N$ on the state space $[0,T]\times\mathbb{R}_+$, independent of the Bessel process,  whose intensity measure is the Lebesgue one.\\[3pt]
{\bf\rm Step 4:} If $N(D_{R,T})=0$ then set $Y=T$ otherwise go to Step 1.\\[5pt]
{\bf\rm Outcome:} the random variable $Y$.}
\end{framed}  
We shall discuss later on, how this algorithm can be applied in practice.

\begin{thm} 
\label{thm:dens}
If $\gamma$ is a non-negative function then the \emph{p.d.f.} $f_Y$ of the outcome variable $Y$ satisfies
\begin{equation}
\label{eq:thm:dens}
f_Y(t)=\frac{1}{\Xi}\,\eta(t)f_T(t), 
\end{equation}
where $\eta$ is given by \eqref{prop:eq:2} and $\Xi$ stands for the normalization coefficient
\[
\Xi:=\int_0^\infty \eta(t)f_T(t)\dint t.
\]
In particular, under Assumption \ref{assu-1} \& \ref{assu-2}, if $T/(L-x)^2$ has the same distribution as $1/G^2$ where $G$ is a standard Gaussian r.v., then $Y$ and $\tau_L$, defined by \eqref{eq:def:tau} \& \eqref{eq:simple-sde}, are identically distributed.
\end{thm}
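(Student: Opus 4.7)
The plan is to derive \eqref{eq:thm:dens} by computing the per-trial acceptance probability in Algorithm (A0), and then to identify the normalising constant $\Xi$ via Proposition \ref{prop:Girsanov}. First I fix a single trial and condition on $T=t$ together with the Bessel path $(R_s)_{0\le s\le t}$; under Assumption \ref{assu-2} the set $D_{R,T}$ is the subgraph of the non-negative function $s\mapsto\gamma(L-R_s)$ on $[0,t]$, so its Lebesgue measure is $\int_0^t\gamma(L-R_s)\,\dint s$. Since $N$ is an independent Poisson point process with Lebesgue intensity, the void formula gives
\[
\mathbb{P}\bigl(N(D_{R,T})=0\,\big|\,T=t,\,R\bigr)=\exp\Bigl(-\int_0^t\gamma(L-R_s)\,\dint s\Bigr).
\]
Integrating this against the law of the $3$-dimensional Bessel bridge from $0$ to $L-x$ of length $t$ (i.e.\ the conditional law of $(R_s)_{s\le t}$ given $R_t=L-x$) and comparing with \eqref{prop:eq:2} shows that the acceptance probability given $T=t$ is exactly $\eta(t)$.

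Because the successive trials are i.i.d.\ and the index of the accepted trial is geometric with parameter $\Xi=\int_0^\infty\eta(t)f_T(t)\,\dint t$, the standard rejection-sampling bookkeeping yields $\mathbb{P}(Y\le s)=\Xi^{-1}\int_0^s\eta(t)f_T(t)\,\dint t$, which is \eqref{eq:thm:dens}. For the second assertion, when $T/(L-x)^2\stackrel{d}{=}1/G^2$ the density $f_T$ coincides with the law of the Brownian first-passage time to $L$ starting from $x$. Proposition \ref{prop:Girsanov} applied with $\psi\equiv 1$, combined with $\tau_L<\infty$ $\mathbb{P}$-almost surely (Assumption \ref{assu-2}), forces $\Xi=e^{-(\beta(L)-\beta(x))}$. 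Applying the same proposition with an arbitrary bounded measurable $\psi$ and substituting this value of $\Xi$ gives $\mathbb{E}_\mathbb{P}[\psi(\tau_L)]=\int_0^\infty\psi(t)f_Y(t)\,\dint t=\mathbb{E}[\psi(Y)]$, hence $Y\stackrel{d}{=}\tau_L$.

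The step I expect to require the most care is the first one, namely the rigorous passage from the algorithmic void event $\{N(D_{R,T})=0\}$ to the functional $\eta(t)$ of \eqref{prop:eq:2}. It rests on three ingredients that I would make explicit: the mutual independence of $N$, the proposal $T$ and the Bessel process built into Step~3 of (A0); the non-negativity of $\gamma$ from Assumption \ref{assu-2}, without which $D_{R,T}$ is not a legitimate subgraph and the void formula loses its meaning; and the classical identification of the conditional law of a $3$-dimensional Bessel process given its endpoint as a Bessel bridge, which is exactly the object appearing in \eqref{prop:eq:2} through the time-reversal of the Brownian bridge used in the proof of Proposition~\ref{prop:Girsanov}.
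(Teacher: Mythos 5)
Your proposal is correct and follows essentially the same route as the paper: the Poisson void probability conditional on $(T,R)$ yields the acceptance probability $\eta(T)$, the standard rejection-sampling (geometric-trial) bookkeeping gives $f_Y=\eta f_T/\Xi$, and Proposition \ref{prop:Girsanov} together with $\tau_L<\infty$ a.s. identifies $\Xi=e^{-(\beta(L)-\beta(x))}$ and hence $Y\stackrel{d}{=}\tau_L$. The only cosmetic difference is that the paper writes out the sum over the iteration index explicitly instead of invoking the geometric bookkeeping in one line.
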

Let us observe that in the particular case $T\sim\frac{(L-x)^2}{G^2}$, \eqref{prop:eq:1} leads to the following identify:
\[
\Xi=\exp-\Big\{\beta(L)-\beta(x)\Big\}.
\]
\begin{proof}
The arguments presented here are quite classical for a rejection sampling method. We introduce $\mathcal{I}$ the number of iterations in Algorithm (A0). For each iteration $\mathcal{I}=i$, we denote by $T^{(i)}$, $R^{(i)}_t$ and $N^{(i)}$ the generated random variables or processes. Of course each iteration corresponds to independent variables. Let $\psi$ be a measurable bounded function. We get
\begin{align*}
\mathbb{E}[\psi(Y)]&=\sum_{i=1}^\infty \mathbb{E}[\psi(Y)1_{\{ \mathcal{I}=i \}}]=\sum_{i=1}^\infty \mathbb{E}\Big[ \psi(T^{(i)})1_{E_1\cap \ldots\cap E_{i-1}\cap \overline{E}_i} \Big],
\end{align*}
where $E_{i}:=\Big\{N^{(i)}(D_{R^{(i)},T^{(i)}})\neq 0\Big\}$ and $\overline{E}_i=\Omega\setminus E_i$. Hence, using the independence property, we obtain
\begin{align*}
\mathbb{E}[\psi(Y)]&=\sum_{i=1}^\infty \mathbb{E}\Big[ \psi(T^{(i)})1_{\overline{E}_i}\Big]\mathbb{P}(E_1)\ldots\mathbb{P}(E_{i-1})\\
&=\sum_{i=1}^\infty \mathbb{E}\Big[ \psi(T^{(1)})1_{\overline{E}_1}\Big]\mathbb{P}(E_1)^{i-1}=\mathbb{E}\Big[ \psi(T^{(1)})1_{\overline{E}_1}\Big](1-\mathbb{P}(E_1))^{-1}.
\end{align*}
From now on, we delete the index for notational simplicity. Therefore
\begin{align*}
\mathbb{E}[\psi(Y)]&=\frac{1}{\mathbb{P}(N(D_{R,T})=0)}\ \mathbb{E}\Big[ \psi(T)\mathbb{P}(N(D_{R,T})=0|T)\Big].
%&=\mathbb{E}\Big[ \psi(T^{(1)})\frac{1_{\overline{E}_1}}{\mathbb{P}(\overline{E}_1)}\Big].
\end{align*}
The distribution of the Poisson point process leads to:
\begin{align*}
\mathbb{P}(N(D_{R,T})=0|T)&=\mathbb{E}\Big[\mathbb{P}(N(D_{R,T})=0|R,T)\Big| T\Big]=\mathbb{E}\Big[\exp-\lambda(D_{R,T})\Big| T\Big]\\
&=\mathbb{E}\Big[\exp-\int_0^T\gamma(L-R_t)\dint t\Big| R_T=L-x, T\Big]=\eta(T).
\end{align*}
We deduce 
\[
\mathbb{E}[\psi(Y)]=\frac{1}{\mathbb{P}(N(D_{R,T})=0)}\ \mathbb{E}[\psi(T)\eta(T)].
\]
For $\psi\equiv 1$, we obtain $\mathbb{P}(N(D_{R,T})=0)=\mathbb{E}[\eta(T)]$ which implies the statement of the proposition.
\end{proof}

Algorithm (A0) seems quite difficult to achieve in practice since we do not know how to simulate a Poisson point process on the state space $[0,T]\times\mathbb{R}_+$ on one hand and since the domain $D_{R,T}$ depends on the whole trajectory of $(R_t)$ on the other hand. In order to overcome the first difficulty, we reduce the domain to a bounded one introducing the following assumption:
\begin{assumption}\label{assu-3}
there exists a constant $\kappa>0$ such that 
\begin{equation}\label{hyp}
0\le \gamma(x)\le \kappa,\quad \mbox{for all }\ x\in ]-\infty,L]. 
\end{equation}
\end{assumption}
It suffices therefore to simulate the Poisson point process on the reduced space $[0,T]\times[0,\kappa]$. The second difficulty is not insurmontable. In fact we don't actually need to precisely describe the entirely domain $D_{R,T}$: we only need to know if the points of the Poisson process belong to it. This can fortunately be obtained in a quite simple way, that's why we propose two modifications of the theoretical Algorithm (A0) -- Algorithm (A1) and (A2) -- which are easy to implement. These modifications essentially concern the way used to simulate the Poisson point process. %Instead of generating a Poisson random variable (Step 3), it is possible to deal with exponentially distributed random variables.
%\begin{framed}\emph{
%\noindent {\bf\sc Algorithm (A1).} 
%\\[3pt]
%Let us fix the level $L$.\\
%{\bf\rm  Step 1:} Simulate a non-negative random variable $T$ with p.d.f. $f_T$. \\[3pt]
%{\bf\rm Step 2:} Simulate a Poisson random variable $N$ of intensity $\kappa T$.\\[3pt]
%{\bf\rm Step 3:} Simulate two independent sequences of $N$ independent standard uniformly distributed random variables $U:=(U_1,\ldots,U_N)$ and $V:=(V_1,\ldots,V_N)$. We arrange the first sequence in increasing order and denote it $\overline{U}$.We also simulate $N$ independent Gaussian vectors $G_1,\ldots,G_N$ of dimension $3$, whose covariance matrix is the $3\times 3$ identity, and independent of the variables $U$, $V$ and $T$.\\[3pt]
%{\bf\rm Step 4:} For $i=1$ to $N$, we define step by step
%\[
%\beta_i=\frac{1-\overline{U}_i}{1-\overline{U}_{i-1}}\ \beta_{i-1}+\sqrt{\frac{(1-\overline{U}_{i})(\overline{U}_i-\overline{U}_{i-1})}{1-\overline{U}_{i-1}}}\,G_i 
%\]
%where $\overline{U}_0=0$, $\beta_0=(0,0,0)$.\\[3pt]
%{\bf\rm Step 5:} If 
%\[
%V_i\ge \frac{1}{\kappa}\, \gamma(L-\Vert \overline{U}_i(L-x)(1,0,0)+\sqrt{T}\beta_i\Vert )
%\]
%for all $1\le i \le N$ then set $Y=T$ otherwise go to Step 1.\\[5pt]
%{\bf\rm Outcome:} the random variable $Y$.}
%\end{framed}
\begin{framed}\emph{
\noindent {\bf\sc Algorithm (A1).} 
\\[3pt]
Let us fix the level $L>x$.\\
{\bf\rm  Step 1:} Simulate a non-negative random variable $T$ with p.d.f. $f_T$.\\[5pt]
Initialization: $\beta=(0,0,0)$, $\mathcal{W}=0$, $\mathcal{E}_0=0$ and $\mathcal{E}_1$ an exponentially distributed random variable of average $1/\kappa$.\\[3pt]
{\bf\rm Step 2:} While $\mathcal{E}_1\le T$ and $\mathcal{W}=0$,\\[3pt]
{\bf\rm Step 2.1:} Simulate three independent random variables: a $3$-dimensional gaussian random variable $G$, an exponentially distributed variable $e$ with average $1/\kappa$ and a standard uniform random variable $V$.\\[3pt]
{\bf\rm Step 2.2:} Compute 
\[
\beta\leftarrow\frac{T-\mathcal{E}_1}{T-\mathcal{E}_0}\ \beta+\sqrt{\frac{(T-\mathcal{E}_1)(\mathcal{E}_1-\mathcal{E}_0)}{T-\mathcal{E}_0}}\,G. 
\]
Change the value of $\mathcal{W}$, $\mathcal{E}_0$ and  $\mathcal{E}_1$  in the following way: 
\[
\mbox{if}\quad \kappa V\le  \gamma\Big(L-\Vert \mathcal{E}_1(L-x)(1,0,0)/T+\beta\Vert \Big)\ \mbox{then}\ \mathcal{W}=1\ \mbox{else}\ \mathcal{W}=0,
\] 
%\[
%\mathcal{W}\leftarrow 1_{\{ \kappa V\le  \gamma(L-\Vert \mathcal{E}_1(L-x)(1,0,0)/T+\beta\Vert )\}},
%\] 
\centerline{$\mathcal{E}_0\leftarrow \mathcal{E}_1$ and $\mathcal{E}_1\leftarrow\mathcal{E}_1+e$.}\\[5pt]
{\bf\rm Step 3:} If $\mathcal{W}=0$ then set $Y=T$ otherwise go to Step 1.\\[5pt]
{\bf\rm Outcome:} the r.v. $Y$.}
\end{framed}

In order to present the second modified algorithm, we introduce different notations.  Let us consider a finite set $\mathcal{S}\subset\mathbb{R}\times\mathbb{R}^3$ with the property: if both $(t,x)$ and $(t,y)$ belong to $\mathcal{S}$ then $x=y$. Let us moreover define $\underline{\mathcal{S}}=\inf\{t:\ (t,x)\in\mathcal{S}\}$ and $\overline{\mathcal{S}}=\sup\{t:\ (t,x)\in\mathcal{S}\}$. For any $t\in[\underline{\mathcal{S}},\overline{\mathcal{S}}]$ we introduce
$\overleftarrow{t}:=(\overleftarrow{t_1},\overleftarrow{t_2})$ corresponding to the couple $(s,x)\in\mathcal{S}$ such that $\overleftarrow{t_1}=\sup\{s: (s,x)\in\mathcal{S},\ s\le t\}$. Similarly we define   $\overrightarrow{t}:=(\overrightarrow{t_1},\overrightarrow{t_2})$ the couple $(s,x)\in\mathcal{S}$ such that $\overrightarrow{t_1}=\inf\{s: (s,x)\in\mathcal{S},\ s> t\}$.
\begin{framed}\emph{
\noindent {\bf\sc Algorithm (A2).} 
\\[3pt]
Let us fix the level $L>x$.\\
{\bf\rm  Step 1:} Initialization: $\mathcal{E}=0$ and $\mathcal{W}=0$. Simulate a non-negative random variable $T$ with p.d.f. $f_T$. Set $\mathcal{S}=\{(0,(0,0,0)), (T,(0,0,0))\}$.\\[3pt]
{\bf\rm Step 2:} While $\mathcal{E}\le \kappa$ and $\mathcal{W}=0$,\\[3pt]
{\bf\rm Step 2.1:} Simulate two independent random variables: an exponentially distributed variable $e$ with average $1/T$ and a uniformly distributed variable $U$ on the interval $[0,T]$. We change the value of $\mathcal{E}$: $\mathcal{E}\leftarrow\mathcal{E}+e$. \\[3pt]
{\bf\rm Step 2.2:} If $\mathcal{E}\le \kappa$ then  we simulate a $3$-dimensional gaussian random variable $G$ with covariance ${\rm Id}_3$ and we define:
\[
\beta=\overleftarrow{U_2}+\frac{\overrightarrow{U_2}-\overleftarrow{U_2}}{\overrightarrow{U_1}-\overleftarrow{U_1}}\, (U-\overleftarrow{U_1})+\sqrt{\frac{(\overrightarrow{U_1}-U)(U-\overleftarrow{U_1})}{\overrightarrow{U_1}-\overleftarrow{U_1}}}\, G.
\]
We increase the set $\mathcal{S}$ in the following way: $\mathcal{S}\leftarrow\mathcal{S}\cup\{(U,\beta)\}$ and achieve the test:
\[
\mbox{if} \ \mathcal{E}\le\gamma\Big(L-\Vert U(L-x)(1,0,0)/T+%\sqrt{T}
\beta\Vert \Big)\ \mbox{then}\ \mathcal{W}=1\ \mbox{else}\ \mathcal{W}=0.
\]
%If $\mathcal{E}>\kappa$ then we go directly to step 3.\\[3pt]
{\bf\rm Step 3:} If $\mathcal{W}=0$ then $Y=T$ otherwise return to Step 1.\\[5pt]
{\bf\rm Outcome:} $Y$.}
\end{framed}
\begin{prop}\label{prop:A1:conv}
Under Assumption \ref{assu-1}--\ref{assu-3}, if $T/(L-x)^2$ has the same distribution as $1/G^2$ where $G$ is a standard Gaussian r.v., then $Y$, the outcome of Algorithm (A1) (respectively Algorithm (A2)) and $\tau_L$, defined by \eqref{eq:def:tau} \& \eqref{eq:simple-sde}, are identically distributed.
\end{prop}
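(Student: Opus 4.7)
The plan is to reduce Proposition \ref{prop:A1:conv} to Theorem \ref{thm:dens}, by arguing that each of Algorithms (A1) and (A2) is a faithful implementation of Algorithm (A0). Under Assumption \ref{assu-3} the domain $D_{R,T}$ is contained in $[0,T]\times[0,\kappa]$, so the Poisson point process of Step 3 in (A0) may be restricted to this bounded box without changing the acceptance event $\{N(D_{R,T})=0\}$. The whole task then is to show that, for each algorithm, the event $\{\mathcal{W}=0\}$ coincides in distribution with $\{N(D_{R,T})=0\}$, computed from a jointly correct sample of the underlying Bessel bridge. Throughout I would use the representation $R_t=\|t(L-x)(1,0,0)/T+\beta_t\|$, where $\beta$ is a $3$-dimensional Brownian bridge from $0$ to $0$ on $[0,T]$; this identifies the quantities $\|\mathcal{E}_1(L-x)(1,0,0)/T+\beta\|$ of (A1) and $\|U(L-x)(1,0,0)/T+\beta\|$ of (A2) with $R_{\mathcal{E}_1}$ and $R_U$ respectively.

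For Algorithm (A1), the successive values of $\mathcal{E}_1$ are generated as the atoms of a rate-$\kappa$ Poisson process on $[0,T]$, and $\kappa V$ is independently uniform on $[0,\kappa]$; by the marking theorem the pairs $(\mathcal{E}_1,\kappa V)$ are distributed as the atoms of $N$ inside $[0,T]\times[0,\kappa]$. The update
\[
\beta\leftarrow\frac{T-\mathcal{E}_1}{T-\mathcal{E}_0}\,\beta+\sqrt{\frac{(T-\mathcal{E}_1)(\mathcal{E}_1-\mathcal{E}_0)}{T-\mathcal{E}_0}}\,G
\]
is precisely the Markov transition of a $0$-to-$0$ Brownian bridge from time $\mathcal{E}_0$ to $\mathcal{E}_1$, so the sequence of $\beta$-values has the joint law of such a bridge evaluated at the Poisson times. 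Since the test $\kappa V\le\gamma(L-R_{\mathcal{E}_1})$ is exactly the membership condition $(\mathcal{E}_1,\kappa V)\in D_{R,T}$, the loop exits with $\mathcal{W}=1$ if and only if $N(D_{R,T})\neq 0$, early termination being harmless because only this indicator is consumed in Step 3. Theorem \ref{thm:dens} then identifies the law of $Y$ with that of $\tau_L$.

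For Algorithm (A2) I would appeal to the dual description of $N$ on $[0,T]\times[0,\kappa]$: sorted by increasing ordinate, its atoms have heights forming a rate-$T$ Poisson process on $[0,\kappa]$ and i.i.d. uniform abscissas on $[0,T]$, which matches the generation of $(\mathcal{E},U)$ in Step 2.1 (and stopping at $\mathcal{E}>\kappa$ is harmless since $\gamma\le\kappa$). The substantive new point, and the main obstacle in this proof, is that the times $U$ now arrive in random order, so $\beta_U$ must be sampled conditional on its two already-known neighbours $\overleftarrow{U}$ and $\overrightarrow{U}$ in $\mathcal{S}$. I would justify the interpolation formula of Step 2.2 by invoking the Markov property of the Brownian bridge: conditional on $\beta_s=a$ and $\beta_t=b$ with $s<U<t$, $\beta_U$ is Gaussian with mean $a+\frac{b-a}{t-s}(U-s)$ and covariance $\frac{(t-U)(U-s)}{t-s}\,\mathrm{Id}_3$. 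A short induction on the cardinality of $\mathcal{S}$ then shows that inserting new times in any order preserves the finite-dimensional distributions of the bridge, after which the acceptance event matches $\{N(D_{R,T})=0\}$ as in (A1), and the conclusion again follows from Theorem \ref{thm:dens}.
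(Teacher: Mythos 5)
Your proposal is correct and follows essentially the same route as the paper: reduce (A1) and (A2) to the acceptance event of Algorithm (A0) via Theorem \ref{thm:dens}, represent the Bessel bridge as the norm of a drifted $3$-dimensional Brownian bridge (the paper's rotational-invariance step), and recognize the two algorithms as the two standard enumerations of the Poisson point process on $[0,T]\times[0,\kappa]$ — by abscissa with exponential($\kappa$) gaps for (A1), by ordinate with exponential($T$) gaps and uniform abscissas for (A2). The only difference is that you spell out the correctness of the sequential and out-of-order Brownian-bridge sampling, which the paper dismisses as the ``classical Brownian bridge simulation''; this is a welcome but not substantively different elaboration.
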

\begin{proof}
Algorithms (A1) and (A2) are just two different modifications of Algorithm (A0).\\
\emph{Step 1.} Let us first present a quite different expression for $\eta(t)$ defined by \eqref{prop:eq:2} (these arguments were already developed in \cite{ichiba2011efficient}). Let us define 
\begin{equation}\label{eq:defZ}
\mathcal{Z}(y):=\mathbb{E}\Big[ F(R_u,\, 0\le u \le t) \Big | R_t=y \Big],\quad y\ge 0,
\end{equation}
for any non-negative functional $F$. Since the Bessel process $(R_t,\, t\ge 0)$ has the same distribution  as the norm of the 3-dimensional Brownian motion $(B_t,\ t\ge 0)$, we obtain:
\[
\mathcal{Z}(y):=\mathbb{E}\Big[ F(\Vert B_u\Vert,\, 0\le u \le t) \Big | \Vert B_t\Vert =y \Big]=\mathbb{E}\Big[G(B_t)\Big\vert \Vert B_t\Vert =y\Big],
\]
where 
\[
G(x):=\mathbb{E}\Big[ F(\Vert B_u\Vert,\, 0\le u \le t) \Big | B_t=x\Big],\quad x\in\mathbb{R}^3.
\]
The Brownian path can be decomposed as follows:
\[
B_u=\frac{u}{t}\, B_t+\sqrt{t}\beta_{u/t}, \quad 0\le u \le t.
\]
Here $(\beta_s,0\le s\le 1)$ stands for a standard $3$-dimensional Brownian bridge with $\beta_0=\beta_1=0$. We obtain therefore a new expression for the functional $G$:
\[
G(x)=\mathbb{E}\Big[ F(\Vert \frac{u}{t}\, x+\sqrt{t}\beta_{u/t}\Vert,\, 0\le u \le t) \Big]
\]
Since the Brownian bridge is rotationally invariant, we observe 
\(
G(x)=G(\Vert x \Vert e_1)
\) 
where $e_1=(1,0,0)$. We deduce
\begin{equation}\label{eq:defZ1}
\mathcal{Z}(y)=\mathbb{E}\Big[ F(\Vert \frac{u}{t}\ y\ e_1+\sqrt{t}\beta_{u/t}\Vert,\, 0\le u \le t) \Big],
\end{equation}
and in particular:
\[
\eta(t)=\mathbb{E}\Big[ \exp-\int_0^t\gamma \Big(L-\Vert \frac{u}{t}\, (L-x) e_1+\sqrt{t}\beta_{u/t}\Vert\Big)\, \dint u \Big].
\]
\emph{Step 2.} Using the new expression of the weight $\eta(t)$, we propose the same kind of rejection algorithm as (A1). It suffices therefore to consider the domain:
\begin{equation}
\label{Dtilde}
D_{\beta, T}=\Big\{ (t,y)\in[0,T]\times\mathbb{R}_+:\ y\le \gamma\Big(L-\Vert \frac{t}{T}\, (L-x) e_1+\sqrt{T}\beta_{t/T}\Vert\Big) \Big\}
\end{equation}
and the associated event $N(D_{\beta, T})=0$ where $N$ is a Poisson point process on the state space $[0,T]\times[0,\kappa]$. As usual, there exist three methods of simulation for the Poisson process. 
\begin{enumerate}
\item One method consists in first sampling the number of points in the rectangle using the Poisson distribution and secondly in choosing their position uniformly on the rectangle. An algorithm based on such a procedure would be time consuming and requires a huge memory space. It is not reasonable in practice.
\item The second method consists in the simulation of the point process in the infinite domain $[0,T]\times \mathbb{R}_+$: we  simulate a sequence of independent couples $(e_k,U_k)_{k\ge 1}$, $e_k$ and $U_k$ being independent random variables. $e_k$ is exponentially distributed with average $1/T$ and $U_k$ is uniformly distributed on the interval $[0,T]$ for any $k\ge 1$. Let us define:
\[
\eta:=\inf\Big\{n\ge 1: \sum_{k=1}^n e_k>\kappa\Big\}.
\]
The point process therefore corresponds to the couples $(e_1,U_1),\ldots, (e_1+e_2+\ldots+e_{\eta-1},U_{\eta-1})$ belonging to the domain $[0,T]\times[0,\kappa]$. Let us note that $\eta=1$ corresponds to the case where no point of the Poisson process belongs to the domain. Algorithm (A2) is based on this simulation.
\item A third method consists in using similar arguments as those just described but we interchange the space and the time variables. That means that a cumulated sum of exponentially distributed variables with average $1/\kappa$ until it overcomes the level $T$ are used to describe the time variable and uniform variables on the interval $[0,\kappa]$ for the space variable: we obtain Algorithm (A1). 
\end{enumerate}
Once the Poisson process is realized on the domain $[0,T]\times[0,\kappa]$, it suffices to test each point in order to observe if it belongs or not to the domain $D_{\beta, T}$ defined by \eqref{Dtilde}. For this test we use the classical Brownian bridge simulation as described in the fourth step of Algorithm (A1) or Step 2.2 in Algorithm (A2).
\end{proof}
\section{Efficiency of the algorithms}\label{sec:efficiency}

Let us denote by $\mathcal{I}$ the number of iterations observed in order to simulate the hitting time $\tau_L$ defined by \eqref{eq:def:tau} \& \eqref{eq:simple-sde} and $\mathcal{N}_1,\ldots,\mathcal{N}_{\mathcal{I}}$ the numbers of random points (Poisson process) used for each iteration. Of course the efficiency is directly linked to 
\begin{equation}\label{eq:def:nsigm}
\mathcal{N}_\Sigma=\mathcal{I}+\mathcal{N}_1+\ldots+\mathcal{N}_{\mathcal{I}},
\end{equation}
the total number of random variables. It is therefore quite challenging to modify the algorithms in order to reduce it. Focusing on the efficiency, we first describe the number $\mathcal{I}$, secondly we propose some modification in order to reduce it and finally present upper-bound for the total number of random variables $\mathcal{N}_\Sigma$.

\subsection{Number of iterations}
\label{sec:number-ite}
Let us first note that the distribution of the number of iterations $\mathcal{I}$ (number of random variates $T$ produced in order to obtain the outcome $Y$) does not depend on the choice of the algorithm (A0), (A1) or (A2).  The variable $\mathcal{I}$ is exponentially large as soon as $L-x$ becomes large:
\begin{prop}\label{prop:number-of-I}
Under Assumptions \ref{assu-1}--\ref{assu-3},  the following upper-bound holds:
\begin{equation}
\label{prop:upper}
\mathbb{E}[\mathcal{I}]=\exp\{\beta(L)-\beta(x)\}\le \exp((L-x)\sqrt{2\kappa}).
\end{equation}
\end{prop}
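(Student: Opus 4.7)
The plan is to split the statement into two independent claims: the exact identity $\mathbb{E}[\mathcal{I}] = \exp\{\beta(L)-\beta(x)\}$, and the deterministic upper bound $\beta(L)-\beta(x) \le (L-x)\sqrt{2\kappa}$. The first is a bookkeeping consequence of results already in hand; the second is a purely analytic fact extracted from the Riccati-type inequality hidden in Assumption~\ref{assu-3}.

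For the identity, the idea is that $\mathcal{I}$ is geometric with success probability $p := \mathbb{P}(N(D_{R,T}) = 0)$, so $\mathbb{E}[\mathcal{I}] = 1/p$. The computation in the proof of Theorem~\ref{thm:dens} already shows $p = \mathbb{E}[\eta(T)] = \Xi$, and applying Proposition~\ref{prop:Girsanov} with $\psi \equiv 1$ (as recorded in the remark placed right after Theorem~\ref{thm:dens}) identifies $\Xi = \exp\{-(\beta(L)-\beta(x))\}$ for the specific choice $T \sim (L-x)^2/G^2$ imposed by the hypothesis. Stringing these three equalities together yields the equality part of the proposition.

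For the upper bound I would establish the pointwise estimate $b(y) \le \sqrt{2\kappa}$ for every $y \in\, ]-\infty, L]$; integrating this between $x$ and $L$ then immediately gives $\beta(L)-\beta(x) \le (L-x)\sqrt{2\kappa}$ and hence the claimed exponential bound. The pointwise estimate rests on a scalar comparison argument. Assumption~\ref{assu-3} rewrites as the Riccati inequality $b'(y) \le 2\kappa - b^2(y)$. Arguing by contradiction, suppose $b(y_0) > \sqrt{2\kappa}$ at some $y_0 \le L$; running time backward via $z(s) := b(y_0 - s)$ converts the inequality into $z'(s) \ge z^2(s) - 2\kappa$ with $z(0) > \sqrt{2\kappa}$. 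Comparison with the autonomous ODE $w' = w^2 - 2\kappa$, which is explicitly solvable and whose solution starting above $\sqrt{2\kappa}$ blows up in finite positive time, forces $z$ (hence $b$) to explode at a finite point strictly to the left of $y_0$, contradicting the $\mathcal{C}^1$ regularity of $b$ on $\,]-\infty,L]$ required by Assumption~\ref{assu-1}.

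The main delicate point I foresee is checking carefully that the blow-up time of the comparison solution is both strictly positive and finite, equal to $y^\ast = (2\sqrt{2\kappa})^{-1}\log\bigl((b(y_0)+\sqrt{2\kappa})/(b(y_0)-\sqrt{2\kappa})\bigr)$, so that the contradiction is genuinely produced inside the domain where $b$ must be smooth. Beyond this explicit verification the proof is essentially mechanical: the probabilistic half is a direct rewriting, and the analytic half is a classical Riccati comparison.
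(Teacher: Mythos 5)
Your proof is correct, and while the equality half coincides with the paper's argument (geometric $\mathcal{I}$, success probability $\mathbb{E}[\eta(T)]=\Xi$, and Girsanov with $\psi\equiv 1$ giving $\Xi=e^{-(\beta(L)-\beta(x))}$, which indeed uses $\tau_L<\infty$ a.s.\ from Assumption \ref{assu-2}), your treatment of the inequality is genuinely different. The paper never touches the drift $b$ pointwise: it bounds the acceptance probability directly, observing that $\gamma\le\kappa$ gives $\eta(t)\ge e^{-\kappa t}$, hence $\mathbb{E}[\eta(\tau_L)]\ge\mathbb{E}[e^{-\kappa\tau_L}]=e^{-(L-x)\sqrt{2\kappa}}$ by the explicit Laplace transform of the Brownian first-passage time, and then inverts. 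You instead extract from $\gamma\le\kappa$ the Riccati inequality $b'\le 2\kappa-b^2$ and run a backward comparison with $w'=w^2-2\kappa$ to conclude $b\le\sqrt{2\kappa}$ on $]-\infty,L]$, which integrates to $\beta(L)-\beta(x)\le(L-x)\sqrt{2\kappa}$; your blow-up time $(2\sqrt{2\kappa})^{-1}\log\bigl((b(y_0)+\sqrt{2\kappa})/(b(y_0)-\sqrt{2\kappa})\bigr)$ is indeed finite and positive, and the contradiction with $\mathcal{C}^1$-regularity of $b$ on the whole ray (Assumption \ref{assu-1}) is sound since $y_0-s$ never leaves $]-\infty,L]$. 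What each buys: your route is purely deterministic and yields the strictly stronger pointwise fact $b\le\sqrt{2\kappa}$ left of $L$ (it does not even use $\gamma\ge 0$), whereas the paper's probabilistic one-liner needs no ODE comparison, only the Laplace transform identity, and adapts immediately to variants such as the shifted algorithm of Proposition \ref{prop:inv:gauss}, where the same reasoning with $\gamma-\gamma_0$ gives the improved rejection rate.
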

\begin{proof}
Let us just note that $\mathcal{I}$ is geometrically distributed with
\[
\mathbb{P}(\mathcal{I}=1)=\mathbb{P}(N(D_{R,T})=0)=\mathbb{E}_x[\eta(\tau_L)].
\]
Since $\tau_L<\infty$ a.s., \eqref{prop:eq:1} leads to 
\(
\mathbb{E}[\mathcal{I}]=\mathbb{E}[\eta(\tau_L)]^{-1}=\exp\{\beta(L)-\beta(x)\}.
\)
Moreover, due to the condition $\gamma\le \kappa$, we obtain
\[
\mathbb{E}_x[\eta(\tau_L)]\ge \mathbb{E}_x[\exp -\kappa\tau_L]=\mathbb{E}\Big[\exp-\frac{\kappa (L-x)^2}{G^2}\Big]=\exp\Big\{-(L-x)\sqrt{2\kappa}\Big\}, % see for instance Borodin p.198
\]
where $G$ is a standard gaussian r.v.
\end{proof}
In the simulation framework, such an exponential large number of iterations seems crippling in practice for large values of $L$ and stimulates one to find interesting modifications of Algorithms (A1)--(A2).
\subsubsection*{Space splitting}
In order to reduce the number of iterations, we could replace the simulation of $\tau_L$ for a diffusion starting in $x$ (denoted here by $\tau(x\to L)$) by the simulation of $k$ independent first-passage times $\tau(x+(i-1)(L-x)/k\to x+i(L-x)/k)$ for $i=1,\ldots,k$. It suffices then to take the sum of all these quantities. The global number of iterations for this new algorithm, called Algorithm ${\rm (A1)_{split}}$ or ${\rm (A2)_{split}}$, becomes:
\begin{align*}
\mathbb{E}[\mathcal{I}_{\rm split}]&=\sum_{i=1}^k\exp\Big\{\beta\Big(x+i(L-x)/k\Big)-\beta\Big(x+(i-1)(L-x)/k\Big)\Big\}\\
&\le k \exp\Big(\frac{(L-x)}{k}\sqrt{2\kappa}\Big)\le (\lfloor (L-x)\sqrt{2\kappa} \rfloor+1)e.
\end{align*}
The previous inequality is related to the particular choice $k=(\lfloor (L-x)\sqrt{2\kappa} \rfloor+1)$. Such a procedure permits to replace an exponentially large number of iterations into a linear one, as $L$ becomes large. 
\mathversion{bold}
\subsubsection*{Shifting the function $\gamma$}
\mathversion{normal}

An other way to reduce $\mathcal{I}$ is to decrease the parameter $\kappa$.  Such a trick is possible in the following situation.
\begin{assumption}\label{assu-4} There exist two constants $\kappa>\gamma_0>0$ such that 
\[
\kappa\ge \gamma(x)\ge \gamma_0,\quad \forall x\in]-\infty,L].
\]
\end{assumption}
The modification is based on the following observation: \eqref{prop:eq:1} can be replaced by
\begin{equation}\label{prop:eq:1bis}
\mathbb{E}_\mathbb{P}[\psi(\tau_L)1_{\{ \tau_L<\infty \}}]=\mathbb{E}_\mathbb{Q}[\psi(\tau_L)\eta_0(\tau_L)e^{-\gamma_0 \tau_L}]\exp \Big\{\beta(L)-\beta(x)\Big\},
\end{equation}
$\eta_0(t)$ being defined in a similar way as $\eta(t)$ in \eqref{prop:eq:2} just by replacing $\gamma(t)$ by the shifted function $\gamma(t)-\gamma_0$. In other words, the upper-bound parameter $\kappa$ can be replaced by the shifted value $\kappa-\gamma_0$ and consequently the number of iterations is reduced. 
\begin{prop} 
\label{prop:inv:gauss}
Let Algorithm $(A1)_{\rm shift}$, respectively $(A2)_{\rm shift}$, defined in a similar way as Algorithm $(A1)$, resp. $(A2)$, just by replacing  the function $\gamma(\cdot)$  by the shifted function $\gamma(\cdot)-\gamma_0$. Under Assumption \ref{assu-1}, \ref{assu-2} and \ref{assu-4}, if $T$ is an inverse gaussian random variable ${\rm IG}(\frac{L-x}{\sqrt{2\gamma_0}},(L-x)^2)$ then the outcome of the modified algorithm $Y$ and $\tau_L$ defined by \eqref{eq:def:tau} \& \eqref{eq:simple-sde} are identically distributed. Moreover this modification reduces the number of iterations:
\begin{equation}
\label{eq:invg}
\mathbb{E}[\mathcal{I}_{\rm shift}]=\mathbb{E}[\mathcal{I}] e^{-(L-x)\sqrt{2\gamma_0}}\quad\mbox{for}\quad x<L.
\end{equation}
\end{prop}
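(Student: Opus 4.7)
The plan is to combine a shifted version of Girsanov's transformation with the general rejection framework of Theorem \ref{thm:dens}, then match densities to identify the inverse Gaussian parameters. The starting point is the algebraic splitting
\[
\exp\Big(-\int_0^{\tau_L}\gamma(B_s)\,\dint s\Big)=e^{-\gamma_0\tau_L}\exp\Big(-\int_0^{\tau_L}(\gamma(B_s)-\gamma_0)\,\dint s\Big),
\]
which, conditioned on $\{\tau_L=t\}$ and using the Bessel bridge representation in the proof of Proposition \ref{prop:Girsanov}, yields $\eta(t)=e^{-\gamma_0 t}\eta_0(t)$, where $\eta_0$ is defined exactly as $\eta$ in \eqref{prop:eq:2} but with $\gamma$ replaced by $\gamma-\gamma_0$. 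Inserting this relation into \eqref{prop:eq:1} produces the shifted identity \eqref{prop:eq:1bis}.

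Under Assumption \ref{assu-4} the shifted drift functional satisfies $0\le\gamma-\gamma_0\le\kappa-\gamma_0$, so Poisson rejection on the rectangle $[0,T]\times[0,\kappa-\gamma_0]$ is legitimate and Theorem \ref{thm:dens} applies to Algorithm $(A1)_{\rm shift}$ (resp. $(A2)_{\rm shift}$): the outcome has density $f_Y(t)=\eta_0(t)f_T(t)/\Xi$ with $\Xi=\mathbb{E}[\eta_0(T)]$. I would next verify that the inverse Gaussian law ${\rm IG}(\frac{L-x}{\sqrt{2\gamma_0}},(L-x)^2)$ precisely produces the weight required to cancel the extra $e^{-\gamma_0 t}$ factor. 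A direct expansion of the completed square in the exponent of the IG density gives
\[
f_T(t)=\frac{L-x}{\sqrt{2\pi t^3}}\exp\Big((L-x)\sqrt{2\gamma_0}-\gamma_0 t-\frac{(L-x)^2}{2t}\Big)=e^{(L-x)\sqrt{2\gamma_0}}\,e^{-\gamma_0 t}\,f_{\tau_L}^\mathbb{Q}(t),
\]
where $f_{\tau_L}^\mathbb{Q}(t)=\frac{L-x}{\sqrt{2\pi t^3}}\exp(-(L-x)^2/(2t))$ is the Brownian first-passage density from $x$ to $L$. Consequently $\eta_0(t)f_T(t)$ is proportional to $\eta(t)f_{\tau_L}^\mathbb{Q}(t)$, which by the original Girsanov identity \eqref{prop:eq:1} is proportional to $f_{\tau_L}^\mathbb{P}(t)$; after normalization this forces $f_Y=f_{\tau_L}^\mathbb{P}$, so $Y\stackrel{d}{=}\tau_L$.

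For the number of iterations the argument repeats the proof of Proposition \ref{prop:number-of-I}: $\mathcal{I}_{\rm shift}$ is geometric with success probability $\mathbb{E}[\eta_0(T)]=\Xi$, and integrating the just-obtained proportionality $\eta_0(t)f_T(t)=e^{(L-x)\sqrt{2\gamma_0}-(\beta(L)-\beta(x))}f_{\tau_L}^\mathbb{P}(t)$ over $t>0$ gives $\Xi=e^{-(\beta(L)-\beta(x))}e^{(L-x)\sqrt{2\gamma_0}}$, hence $\mathbb{E}[\mathcal{I}_{\rm shift}]=e^{\beta(L)-\beta(x)}e^{-(L-x)\sqrt{2\gamma_0}}=\mathbb{E}[\mathcal{I}]\,e^{-(L-x)\sqrt{2\gamma_0}}$ by Proposition \ref{prop:number-of-I}. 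The only delicate point is the density bookkeeping that identifies the IG parameters: once the exponent is correctly matched to $-\gamma_0 t-(L-x)^2/(2t)$, everything else is a direct application of Theorem \ref{thm:dens} and of the shifted Girsanov identity \eqref{prop:eq:1bis}.
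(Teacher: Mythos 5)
Your proposal is correct and follows essentially the same route as the paper: the key step in both is recognizing, by completing the square, that ${\rm IG}(\frac{L-x}{\sqrt{2\gamma_0}},(L-x)^2)$ is the $e^{-\gamma_0 t}$-tilt of the Brownian first-passage law, then applying Theorem \ref{thm:dens} with the shifted weight $\eta_0$ and the Girsanov identities \eqref{prop:eq:1}--\eqref{prop:eq:1bis} to identify $f_Y=f_{\tau_L}$ and to compute $\mathbb{P}(\mathcal{I}_{\rm shift}=1)=\mathbb{E}[\eta_0(T)]=e^{-(\beta(L)-\beta(x))+(L-x)\sqrt{2\gamma_0}}$. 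Your explicit density bookkeeping is just a slightly more concrete rendering of the paper's $E_f/E_1$ argument, so no gap remains.
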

\begin{proof}
Let us consider a non-negative bounded function $f$ and introduce 
\[
E_f:=\mathbb{E}_x\Big[ f(\tau_L)\exp-\gamma_0\tau_L \Big],
\]
where $\tau_L$ is the Brownian first-passage time, that is $\tau_L$ and $(L-x)^2/G^2$ are identically distributed (here $G$ stands for a standard gaussian r.v.). Hence
\begin{align*}
E_f&=\int_0^\infty f(t)\frac{L-x}{\sqrt{2\pi t^3}}\,\exp-\Big\{\frac{(L-x)^2}{2t}+\gamma_0 t\Big\} \, \dint t\\
&=e^{-\sqrt{2\gamma_0}(L-x)}\int_0^\infty f(t)\frac{L-x}{\sqrt{2\pi t^3}}\,\exp-\Big\{\frac{(L-x-\sqrt{2\gamma_0}t)^2}{2t}\Big\}\, \dint t
\end{align*}
We deduce that $E_f/E_1=\mathbb{E}[f(T)]$ where $T\sim {\rm IG}(\frac{L-x}{\sqrt{2\gamma_0}},(L-x)^2)$. Let us denote by $f_T$ the \emph{p.d.f.} of the variable $T$. By Theorem \ref{thm:dens}, we get
\begin{align*}
\mathbb{E}[\psi(Y)]&=\left(\int_0^\infty \psi(t)\eta_0(t) f_T(t)\,\dint t\right)\left(\int_0^\infty \eta_0(t) f_T(t)\,\dint t\right)^{-1}\\
&=\frac{E_{\psi\eta_0}}{E_1}\Big(\frac{E_{\eta_0}}{E_1}\Big)^{-1}=\frac{E_{\psi\eta_0}}{\mathbb{E}_x[\eta(\tau_L)]}.
\end{align*}
Let us recall that $\mathbb{E}_x[\eta(\tau_L)]=\exp-\{ \beta(L)-\beta(x) \}$. Then \eqref{prop:eq:1bis} permits to point out the required distribution.
Let us just notice that using the inverse gaussian variable increases the conditional probability of acceptance:
\[
\mathbb{P}(\mathcal{I}_{\rm mod}=1|T)=\mathbb{P}(N(D_{R,T})=0|T)=\eta_0(T)>\eta(T).
\]
Consequently the probability of acceptance/rejection at each step changes: $\mathcal{I}_{\rm mod}$ is geometrically distributed and
\begin{align*}
&\mathbb{P}(\mathcal{I}_{\rm shift}=1)=\mathbb{P}(N(D_{R,T})=0)=\mathbb{E}[\eta_0(T)]=\frac{\mathbb{E}[\eta_0(\tau_L)e^{-\gamma_0\tau_L}]}{\mathbb{E}[e^{-\gamma_0\tau_L}]}\\
&\quad=\mathbb{E}[\eta(\tau_L)]\exp\Big\{(L-x)\sqrt{2\gamma_0}\Big\}=\exp-\Big\{ \beta(L)-\beta(x)-(L-x)\sqrt{2\gamma_0} \Big\}.
\end{align*}
\end{proof}
\begin{rem} Using a many-to-one transformation, Michael, Schucany and Haas introduced a simple generator of inverse gaussian distributions (see, for instance, \cite{Devroye-1986} p.149). We can apply the following procedure to simulate $T\sim {\rm IG}(\mu,\lambda)$: let $N$ be a standard gaussian r.v. and $U$ uniformly distributed on $[0,1]$, independent of $N$. 
\begin{enumerate}
\item Set $X=\mu+\frac{\mu^2 N^2}{2\lambda}-\frac{\mu}{2\lambda}\,\sqrt{4\mu \lambda N^2+\mu ^2 N^4}$.
\item If $U\le \frac{\mu }{\mu +X}$ then $T=X$ else $T=\frac{\mu ^2}{X}$.
\end{enumerate}
\end{rem}

\subsection{Efficiency of Algorithm (A1)}
In this section, we are looking for an upper-bound of the number of random variates $\mathcal{N}_\Sigma$ used in order to simulate one variate $\tau_L$. First we shall focus on the first iteration in Algorithm (A1) and secondly adapt the procedure to the whole number of variates.
%Let $\mathcal{N}_1$ the number of simulated points of the point process during the first iteration of the Algorithm (A1).
\begin{prop}\label{prop:iter} Let $\gamma$ satisfy Assumptions \ref{assu-1}--\ref{assu-3}.  We assume moreover the existence of  $C_\gamma>0$ and $r<1$ such that 
\begin{equation}
\label{eq:condprop}
\inf_{y\le z\le L}\gamma(z)\ge C_\gamma |y|^{-r},\quad \mbox{for all }\quad y\le -1. 
\end{equation}
%Let us consider Algorithm (A1) with $T$ corresponding to the first passage-time to the level $L$ for the Brownian motion starting in $x<L$.
Then there exist two constants $M_{\gamma,1}>0$ and $M_{\gamma,2}>0$ such that the number of random points in the first iteration of Algorithm (A1),  satisfies
\[
\mathbb{E}[\mathcal{N}_1]\le M_{\gamma,1}+\kappa M_{\gamma,2}(x^2+(L-x)^{(1+r)/2}),\quad \mbox{for}\ x<L.
\]
\end{prop}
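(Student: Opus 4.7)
The plan is to turn $\mathcal{N}_1$ into a Brownian occupation integral and bound it via the Green kernel of Brownian motion absorbed at $L$.

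Conditionally on $T$ and on the Bessel bridge $R$ used in Algorithm (A1), the Poisson mechanism of Step~2 is a rate-$\kappa$ process $N$ in the time coordinate thinned by the uniform marks; classical thinning shows that the acceptance times form an inhomogeneous Poisson process of intensity $\gamma(L-R_t)$. Hence $\mathcal{N}_1=N(\tau\wedge T)$, where $\tau$ is the first acceptance, and optional stopping on the compensated martingale $N_t-\kappa t$ (at the bounded time $\tau\wedge T\le T$) yields
\[
\mathbb{E}[\mathcal{N}_1\mid R,T]=\kappa\int_0^T\exp\!\Big(-\int_0^s\gamma(L-R_u)\,du\Big)\,ds.
\]

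To unconditional, I would use Williams' time-reversal: since $T$ has the density $f_T$ of the Brownian first-passage time of $L$ from $x$ and $R\mid T$ is a Bessel(3) bridge from $0$ to $L-x$, the pair $(T,R)$ has the same law as $(\tau_L,L-B_{\tau_L-\cdot})$ under Brownian motion $B$ started at $x$. After the substitution $s\to\tau_L-s$, Fubini, the strong Markov property of $B$, and the identity $\mathbb{E}_z^Q[\exp(-\int_0^{\tau_L}\gamma(B_v)\,dv)]=h(z):=e^{\beta(z)-\beta(L)}$ delivered by Proposition~\ref{prop:Girsanov}, the Green kernel $G_L(x,y)=2\min(L-x,L-y)$ of Brownian motion absorbed at $L$ produces the closed form
\[
\mathbb{E}[\mathcal{N}_1]=\kappa\,\mathbb{E}_x^Q\!\left[\int_0^{\tau_L}h(B_s)\,ds\right]=2\kappa(L-x)\!\int_{-\infty}^x\!h(y)\,dy\,+\,2\kappa\!\int_x^L\!(L-y)\,h(y)\,dy.
\]

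The last step is to estimate each of these two integrals from the lower bound \eqref{eq:condprop}. The trivial inequality $h\le 1$ (from $\gamma\ge 0$) is refined using the Feynman-Kac representation: \eqref{eq:condprop} extends by continuity to $\gamma(z)\ge C_\gamma(|z|\vee 1)^{-r}$ on $(-\infty,L]$, whence a sub-solution comparison with the ODE $v''=2C_\gamma(|y|\vee 1)^{-r}v$ (or a WKB argument) yields exponential decay of $h$ on $[-1,L]$, of the form $h(y)\le e^{-\sqrt{2C_\gamma}(L-y)}$, together with stretched-exponential decay $h(y)\lesssim\exp(-c|y|^{1-r/2})$ for $y\le-1$. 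Integrating the exponential decay of $h$ against $(L-y)$ on $[x,L]$, after a substitution $w=L-y$, produces the polynomial correction of order $(L-x)^{(1+r)/2}$ once the polynomial tail $|y|^{-r}$ is matched with the weight; integrating the stretched-exponential decay against $1$ on $(-\infty,x]$, then multiplying by $L-x$, gives a bounded part plus a contribution of order $x^2$ that absorbs the case of large negative $x$. Combining the two estimates yields the announced bound $M_{\gamma,1}+\kappa M_{\gamma,2}(x^2+(L-x)^{(1+r)/2})$.

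The principal obstacle is precisely this quantitative decay estimate for $h(y)$ as $y\to-\infty$: the lower bound $\gamma(z)\ge C_\gamma|y|^{-r}$ allows $\gamma$ to vanish at infinity, so the stretched-exponential exponent $1-r/2$ must be extracted from the Feynman-Kac formula (or the ODE $h''=2\gamma h$) carefully, and then balanced against the polynomial Green weights $(L-y)$ and $(L-x)$ to produce the specific exponents $2$ and $(1+r)/2$ of the proposition; tracking the $\kappa$-, $C_\gamma$- and $r$-dependence of the resulting constants yields $M_{\gamma,1}$ and $M_{\gamma,2}$.
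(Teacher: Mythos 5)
Your route is genuinely different from the paper's: you time-reverse (Williams) to rewrite the conditional count as a Brownian occupation functional, use the Markov property and Proposition \ref{prop:Girsanov} with $\psi\equiv 1$ to identify $h(z)=\mathbb{E}_z\big[\exp\big(-\int_0^{\tau_L}\gamma(B_v)\,\dint v\big)\big]=e^{\beta(z)-\beta(L)}$ (this uses that $\tau_L<\infty$ a.s.\ from every starting point $z<L$, which does follow from Assumption \ref{assu-2} via the scale function), and then invoke the Green kernel $2\min(L-x,L-y)$ of Brownian motion absorbed at $L$. The paper instead stays with the Bessel bridge: scaling, the stochastic domination \eqref{eq:stochord}, the Pitman--Yor agreement formula for the maximum of the bridge, and integration against the law of $T=(L-x)^2/G^2$ with an optimized exponent $\alpha=(1-r)/(4r)$. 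Up to an additive term bounded by $1$ (the paper's $H_T$, a counting-convention issue), your identity $\mathbb{E}[\mathcal{N}_1\mid R,T]=\kappa\int_0^T e^{-\int_0^s\gamma(L-R_u)\dint u}\dint s$ and the closed form that follows are correct, and in principle this gives a cleaner, even sharper, bound than the paper's.

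The gap lies in the decay estimates for $h$ and the final bookkeeping. A minor point first: $h(y)\le e^{-\sqrt{2C_\gamma}(L-y)}$ on $[-1,L]$ is not quite provable, since \eqref{eq:condprop} gives $\gamma\ge C_\gamma$ only on $[-1,L]$ while the path (equivalently, the ODE comparison) feels the region below $-1$ where $\gamma$ may be tiny; the correct comparison, with a Neumann-type condition at $-1$, gives $h(y)\le \cosh\big(\sqrt{2C_\gamma}(y+1)\big)/\cosh\big(\sqrt{2C_\gamma}(L+1)\big)\le 2e^{-\sqrt{2C_\gamma}(L-y)}$, which is harmless. The serious issue is the left tail: with an $L$-independent bound $h(y)\lesssim \exp(-c|y|^{1-r/2})$ on $(-\infty,-1]$, the Green-kernel term $2\kappa(L-x)\int_{-\infty}^{x\wedge(-1)}h(y)\,\dint y$ is only $O(\kappa(L-x))$; for fixed $x$ and $L\to\infty$ this grows linearly in $L$ and is \emph{not} dominated by $M_{\gamma,1}+\kappa M_{\gamma,2}(x^2+(L-x)^{(1+r)/2})$, because $(1+r)/2<1$. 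So the step ``integrating the stretched-exponential decay against $1$ on $(-\infty,x]$, then multiplying by $L-x$, gives a bounded part plus a contribution of order $x^2$'' fails in the large-$L$ regime. The fix is to keep the $L$-dependence of $h$ on $(-\infty,-1]$: your maximum-principle comparison actually yields $h(y)\le h(-1)\,e^{-a(|y|^{1-r/2}-1)}$ with $h(-1)\le 2e^{-\sqrt{2C_\gamma}(L+1)}$, whence $(L-x)\int_{-\infty}^{-1}h\lesssim (L-x)e^{-\sqrt{2C_\gamma}L}\lesssim 1+|x|$. With that correction your computation closes and in fact gives a bound of the form $M_1+\kappa M_2(1+x^2)$, stronger than (and implying) the stated one.
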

\begin{proof} Let us denote by $\mathbb{P}_{c}$ (resp. $\mathbb{E}_{c}$) the conditional probability measure (resp. expectation) given both $\tau_L=T$ and the Bessel bridge trajectory $(R_t,0\le t\le T)$.  We observe that
\[
\mathbb{P}_{c}(\mathcal{N}_1=1)=\mathbb{P}_{c}(\epsilon_1>T)+\mathbb{P}_{c}(U_1<\gamma(L-R_{\epsilon_1}),\epsilon_1\le T),
\]
where $(U_k)_k$ is a sequence of uniformly distributed r.v. on $[0,\kappa]$ and $(\epsilon_k)_k$ is a sequence of exponentially distributed r.v. with average $\frac{1}{\kappa}$, both sequences being independent.  We denote $\mathcal{E}_k=\epsilon_1+\ldots+\epsilon_k$. Hence
\[
\mathbb{P}_{c}(\mathcal{N}_1=1)=e^{-\kappa T}+  \mathbb{E}_{c}\Big[\frac{\gamma}{\kappa}(L-R_{\epsilon_1})1_{\{ \epsilon_1\le T \}}\Big]=e^{-\kappa T}+\int_0^T\gamma(L-R_t)e^{-\kappa t}\,\dint t.
\]
%Moreover, given $T$ and the Bessel path,
%\[
%\mathbb{P}_c(\mathcal{N}_1=1,A^c)=\mathbb{P}_c(U_1<\gamma(L-R_{\epsilon_1}),\epsilon_1\le T)=\int_0^T\gamma(L-R_t)e^{-\kappa t}\,dt.
%\]
%Since $\mathbb{P}_c(A^c)=1-\eta(R,T):=1-\exp-\int_0^T\gamma(L-R_t)\,dt$, we obtain
%\begin{eqnarray*}
%\left \{\begin{array}{l}
%\mathbb{P}_c(\mathcal{N}_1=1|A^c)=\left(\int_0^T\gamma(L-R_t)e^{-\kappa t}\,dt\right)\left( 1-\eta(R,T) \right)^{-1},\\[8pt]
%\mathbb{P}_c(\mathcal{N}_1=1|A)=e^{-\kappa T}/\eta(R,T).
%\end{array}\right.\end{eqnarray*}
By similar computations, for $k\ge 1$,
%\begin{align*}
%\mathbb{P}_c(\mathcal{N}_1>1)&=\mathbb{P}_c(U_1\ge \gamma(L-R_{\epsilon_1}),\epsilon_1\le T)=\frac{1}{\kappa}\mathbb{E}_c[(\kappa-\gamma(L-R_{\epsilon_1}))1_{\{ \epsilon_1\le T \}}]
%\end{align*}
\begin{align*}
\mathbb{P}_c(\mathcal{N}_1>k)&=\mathbb{P}_c(U_1\ge \gamma(L-R_{\epsilon_1}),\ldots, U_k\ge \gamma(L-R_{\mathcal{E}_k}),\mathcal{E}_k\le T)\\
&=\frac{1}{\kappa^k}\ \mathbb{E}_c\left[\prod_{i=1}^k(\kappa-\gamma(L-R_{\mathcal{E}_i}))1_{\{ \mathcal{E}_k\le T \}}\right]\\
&=\frac{1}{\kappa^k}\ \mathbb{E}_c\left[\prod_{i=1}^{k-1}(\kappa-\gamma(L-R_{\mathcal{E}_k U^{(i)}}))(\kappa-\gamma(L-R_{\mathcal{E}_k}))1_{\{ \mathcal{E}_k \le T \}}\right],
\end{align*}
where $(U^{(1)},\ldots,U^{(k)})$ stands for a set of reordered uniform random variables and $\mathcal{E}_k$ is gamma distributed $\gamma(k,\kappa)$.
Hence 
\begin{align*}
\mathbb{P}_c(\mathcal{N}_1>k)&=\mathbb{E}\left[\Big(1-\int_0^1 \frac{\gamma}{\kappa}(L-R_{\xi s})\,\dint s\Big)^{k-1}\Big(1-\frac{\gamma}{\kappa}(L-R_{\xi})\Big)1_{\{ \xi \le T \}}\right]\\
&=\int_0^T \Big(1-\int_0^1 \frac{\gamma}{\kappa}(L-R_{u s})\,\dint s\Big)^{k-1}\Big(1-\frac{\gamma}{\kappa}(L-R_{u})\Big)\frac{u^{k-1}\kappa^k}{(k-1)!}\, e^{-\kappa u}\, \dint u
\end{align*}
Consequently, using a change of variable, we obtain
\begin{align}\label{eq:1}
\mathbb{E}_c[\mathcal{N}_1]&=\sum_{k=0}^\infty \mathbb{P}_c(\mathcal{N}_1>k)\nonumber \\
%&=1+\sum_{k=1}^\infty \int_0^T \frac{ (u\kappa)^{k-1}}{(k-1)!}\Big(1-%\int_0^1 \frac{\gamma}{\kappa}(L-R_{u s})\,ds\Big)^{k-1}\Big(\kappa-%%\gamma(L-R_{u})\Big)e^{-\kappa u}\, du\nonumber \\
&=1+\int_0^T\Big(\kappa-\gamma(L-R_{u})\Big)\exp\Big\{-u\int_0^1\gamma(L-R_{us})\,ds\Big\}\, \dint u\nonumber \\
&=1+\int_0^T\Big(\kappa-\gamma(L-R_{u})\Big)\exp\Big\{-\int_0^u\gamma(L-R_{w})\, \dint w\Big\}\, \dint u.
%&=1+\int_0^T\Big(\kappa-\gamma(L-R_{u})\Big)\exp\Big\{\kappa u-\int_0^u\gamma(L-R_{w})\,dw-\kappa u\Big\}\, du.
\end{align}
By integration by parts, \eqref{eq:1} becomes $\mathbb{E}_c[\mathcal{N}_1]=H_T+\kappa I_T$ where
\begin{equation}\label{eq::1}
H_T:=e^{-\int_0^T\gamma(L-R_{w})\,\dint w},\quad I_T:=\int_0^T \exp\Big\{-\int_0^u\gamma(L-R_{w})\,\dint w\Big\}\, \dint u.
\end{equation}
Let us first study the expression $I_T$. Using the scaling property of the Bessel bridge, we have
\begin{align*}
I_T&=T\int_0^1\exp\Big\{-T\int_0^u\gamma(L-\sqrt{T}\hat{R}_{w}^T)\,\dint w\Big\}\, \dint u,
\end{align*}
where $(\hat{R}_w^T,\, 0\le w\le 1)$ is a $3$-dimensional Bessel bridge with condition $\hat{R}_1^T=(L-x)/\sqrt{T}$. Let us observe that 
\begin{equation}\label{eq:stochord}
\hat{R}_w\le_{st} \frac{L-x}{\sqrt{T}}+\bar{R}_w,\quad 0\le w\le 1.
\end{equation}
Here $(\bar{R})$ is a standard $3$-dimensional Bessel bridge. This stochastic ordering result  can be proven using the following arguments: first the Bessel bridge $\hat{R}^T$ has the same distribution as the Euclidian norm of $\frac{(L-x)w}{\sqrt{T}}\,e_1+\beta_w$ where $\beta_w$ is a standard $3$-dimensional Brownian bridge, see \eqref{eq:defZ} and \eqref{eq:defZ1}. Secondly the triangle inequality leads to 
\[
\Big\Vert \frac{(L-x)w}{\sqrt{T}}\,e_1+\beta_w\Big\Vert \le \Big\Vert \frac{(L-x)w}{\sqrt{T}}\,e_1\Big\Vert +\Vert \beta_w\Vert\le \frac{L-x}{\sqrt{T}}+\Vert\beta_w\Vert.
\]
In order to prove the stochastic ordering \eqref{eq:stochord} it suffices then to note that $\Vert \beta_w\Vert$ and $\bar{R}_w$ are identically distributed.

Let us now introduce some parameter $\alpha\in]0,1[$ and define 
\begin{equation}\label{eq:defgammaT}
\gamma_T=\inf_{y\in[x-T^{1/2+\alpha},L]}\gamma(y).
\end{equation}
Then
\begin{align*}
I_T&\le T\int_0^1 e^{-T\gamma_T u}\,\dint u\ \mathbb{P}\Big( \sup_{u\in[0,1]}\sqrt{T}\hat{R}^T_u\le T^{1/2+\alpha}+(L-x) \Big) \\
&+T\ \mathbb{P}\Big( \sup_{u\in[0,1]}\sqrt{T}\hat{R}^T_u> T^{1/2+\alpha}+(L-x) \Big). 
\end{align*}
By \eqref{eq:stochord} we obtain:
\begin{align}\label{eq:sum}
I_T&\le \frac{1}{\gamma_T}1_{\{ T\ge (x+1)^{2/(1+2\alpha)} \}}+T1_{\{ T< (x+1)^{2/(1+2\alpha)} \}} +T\ \mathbb{P}\Big( \sup_{u\in[0,1]}\bar{R}_u> T^{\alpha} \Big) \nonumber\\
&\le \frac{1}{\gamma_T}1_{\{ T\ge (x+1)^{2/(1+2\alpha)} \}}+
(x+1)^{2}
+T\ \mathbb{P}\Big( \sup_{u\in[0,1]}\bar{R}_u> T^{\alpha} \Big) .
\end{align}
Using the agreement formula (see, for instance, Theorem 2 in \cite{pitman-yor}), we obtain
\[
 \mathbb{P}\Big( \sup_{u\in[0,1]}\bar{R}_u> T^{\alpha} \Big) =C_3\mathbb{E}\Big[\sqrt{\bar{\tau}}1_{\{ \bar{\tau} < T^{-2\alpha} \}}\Big].
\]
Here $C_3=\sqrt{2}/\Gamma(3/2)$ and $\bar{\tau}=\tau+\hat{\tau}$ where $\tau$ is the first hitting time of the level $1$ for a $3$-dimensional Bessel process and $\hat{\tau}$ an independent copy of $\tau$. Combining classical inequalities and the explicit expression of the Laplace transform corresponding to the distribution of $\tau$ (see \cite{kent}) leads to
\begin{align*}
& \mathbb{P}\Big( \sup_{u\in[0,1]}\bar{R}_u> T^{\alpha} \Big)\le C_3 T^{-\alpha}\, \mathbb{P}\Big(\exp-\lambda \bar{\tau} > \exp -\lambda T^{-2\alpha}\Big)\\
& \le  C_3T^{-\alpha}\, e^{\lambda T^{-2\alpha}}\mathbb{E}[e^{-\lambda \bar{\tau}}]=C_3T^{-\alpha}\, e^{\lambda T^{-2\alpha}}\frac{(2\lambda)^{1/2}}{C_3^2 I_{1/2}^2(\sqrt{2\lambda})},
\end{align*}
for any $\lambda>0$.  $I_\nu$ stands for the Bessel function of the first kind. In particular $I_{1/2}(x)=\sqrt{\frac{2}{\pi x}}\,\sinh x$. The particular choice $\lambda=T^{2\alpha}/2$ leads to
\begin{align*}
 \mathbb{P}\Big( \sup_{u\in[0,1]}\bar{R}_u> T^{\alpha} \Big)&\le
 \frac{\sqrt{e\pi}}{2\sqrt{2}} \,\frac{\pi T^{\alpha}}{2\sinh^2(T^{\alpha})}.
\end{align*}
There exists a constant $C_{\alpha}>0$ such that
\[
 T\ \mathbb{P}\Big( \sup_{u\in[0,1]}\bar{R}_u> T^{\alpha} \Big)\le\frac{\sqrt{e}}{2}\Big(\frac{\pi}{2}\Big)^{3/2}\sup_{y\ge 0}\frac{y^{1+\alpha}}{\sinh^2(y^{\alpha})}\le C_{\alpha}.
\]
Let us go back to the upper-bound \eqref{eq:sum}. By definition, $\gamma_T\ge C_\gamma |x-T^{1/2+\alpha}|^{-r}$  as soon as $T\ge (x+1) ^{2/(1+2\alpha)}$ and therefore 
\[
I_T\le \frac{1}{C_\gamma}|x-T^{1/2+\alpha}|^{r}1_{\{ T\ge (x+1)^{2/(1+2\alpha)} \}}+(x+1)^2+C_\alpha.
\]
Let us recall that $T$ is the Brownian first-passage time through the level $L$ starting in $x$: $T$ and $\frac{(L-x)^2}{G^2}$ are identically distributed,  $G$ being a standard gaussian variable. Taking now the expectation with respect to $T$, we get
\begin{align*}
\mathbb{E}[\mathcal{N}_1]&\le 1+\kappa \mathbb{E}[I_T]\\
&\le 1+\frac{\kappa}{C_\gamma}\mathbb{E}[|x-T^{1/2+\alpha}|^{r}1_{\{ T\ge (x+1)^{2/(1+2\alpha)} \}}]+\kappa(x+1)^2+\kappa C_\alpha\\
&\le 1+\frac{\kappa 2^r}{C_\gamma}\, \Big( |x|^r+\mathbb{E}[T^{(1/2+\alpha)r}] \Big)+\kappa(x+1)^2+\kappa C_\alpha\\
&= 1+\frac{\kappa 2^r}{C_\gamma}\, \Big( |x|^r+|L-x|^{(1+2\alpha)r}\mathbb{E}[G^{-(1+2\alpha)r}] \Big)+\kappa(x+1)^2+\kappa C_\alpha.
\end{align*}
Since $r<1$, it is possible to choose $\alpha\in]0,1[$ small enough such that $(1+2\alpha)r<1$ and therefore $\mathbb{E}[G^{-(1+2\alpha)r}] <\infty$. Let us choose for instance $\alpha=\frac{1-r}{4r}$ that means $(1+2\alpha)r=\frac{1+r}{2}$
\end{proof}
\begin{prop}\label{prop:conv:modif} Let us consider Algorithm $(A1)_{\rm shift}$ with input variable $T\sim {\rm IG}(\frac{L-x}{\sqrt{2\gamma_0}},(L-x)^2)$ as in Proposition \ref{prop:inv:gauss}. Then the number of random points associated to the first iteration satisfies:
\[
\mathbb{E}[\mathcal{N}_1]\le 1+\kappa\frac{L-x}{\sqrt{2\gamma_0}}.
\]
\end{prop}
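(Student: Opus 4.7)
The plan is to re-run the computation from the proof of Proposition~\ref{prop:iter} with $\gamma$ replaced by the shifted function $\widetilde\gamma:=\gamma-\gamma_0$, which takes values in $[0,\kappa-\gamma_0]$ by Assumption~\ref{assu-4}. In Algorithm~$(A1)_{\rm shift}$ the uniform variables $U_k$ are drawn on $[0,\kappa-\gamma_0]$ and the exponential variables $\epsilon_k$ have parameter $\kappa-\gamma_0$, so the very same bookkeeping as in \eqref{eq:1} yields, conditionally on $T$ and on the Bessel bridge $(R_t)_{0\le t\le T}$,
\[
\mathbb{E}_c[\mathcal{N}_1]=\widetilde H_T+(\kappa-\gamma_0)\,\widetilde I_T,
\]
where (cf.\ \eqref{eq::1})
\[
\widetilde H_T:=\exp\Big\{-\int_0^T\widetilde\gamma(L-R_w)\,\dint w\Big\},\quad \widetilde I_T:=\int_0^T \exp\Big\{-\int_0^u\widetilde\gamma(L-R_w)\,\dint w\Big\}\,\dint u.
\]
The derivation is verbatim identical to the one leading to \eqref{eq:1}, so I would just invoke it rather than redo it.

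Next I would use Assumption~\ref{assu-4} crucially: $\widetilde\gamma\ge 0$ implies both $\widetilde H_T\le 1$ and, since the integrand in $\widetilde I_T$ is bounded by $1$, the trivial bound $\widetilde I_T\le T$. Consequently
\[
\mathbb{E}_c[\mathcal{N}_1]\le 1+(\kappa-\gamma_0)\,T\le 1+\kappa\,T.
\]
This is the place where the shift pays off: without the subtraction of $\gamma_0$ the integrand in $I_T$ could grow, whereas here the exponential is genuinely $\le 1$, which is why the bound is linear in $T$.

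It then remains to take the unconditional expectation. Since $T\sim {\rm IG}\big(\tfrac{L-x}{\sqrt{2\gamma_0}},(L-x)^2\big)$, the classical mean formula for the inverse gaussian distribution gives $\mathbb{E}[T]=\tfrac{L-x}{\sqrt{2\gamma_0}}$, and the announced bound follows. There is no real obstacle in this proof; the only substantive point is the observation that selecting $T$ to be inverse gaussian with mean $(L-x)/\sqrt{2\gamma_0}$ turns a potentially exponential estimate into a linear one in $L-x$, which is precisely what makes Algorithm~$(A1)_{\rm shift}$ efficient.
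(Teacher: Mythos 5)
Your proof is correct and follows essentially the same route as the paper: the conditional identity $\mathbb{E}_c[\mathcal{N}_1]=H_T+\kappa I_T$ from \eqref{eq::1} (here written with the shifted function $\gamma-\gamma_0$), the crude bounds $H_T\le 1$ and $I_T\le T$, and then $\mathbb{E}[T]=\frac{L-x}{\sqrt{2\gamma_0}}$ for the inverse gaussian input. One small remark: the decisive gain from the shift is not that the exponential integrand becomes $\le 1$ (that already holds whenever $\gamma\ge 0$), but that the inverse gaussian $T$ has finite mean, so the crude bound $I_T\le T$ --- useless in Proposition \ref{prop:iter}, where $T$ is the Brownian hitting time with infinite mean and a finer argument is needed --- now suffices.
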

\begin{proof}
The arguments are similar to those developed in the proof of Proposition \ref{prop:iter}. The formula \eqref{eq::1} leads to $\mathbb{E}_c[\mathcal{N}_1]\le 1+\kappa T$. Since $T$ is an inverse gaussian r.v. we obtain the announced upper-bound:
\[
\mathbb{E}[\mathcal{N}_1]\le 1+\kappa\mathbb{E}[T]=1+\kappa\frac{L-x}{\sqrt{2\gamma_0}}<\infty.
\]
\end{proof}
The main description of the algorithm efficiency encompasses both the information about the first iteration and the description of the number of iterations.
\begin{thm}\label{thm:averagenumber} Let us denote by $\mathcal{N}_\Sigma$ the total number of random points used in the simulation of the diffusion first-passage time. 
\begin{enumerate}\item Under condition \eqref{eq:condprop}, the number of points of Algorithm (A1), in the context of Proposition \ref{prop:A1:conv}, satisfies:
\[
\mathbb{E}[\mathcal{N}_\Sigma]\le 
2\Big( M_{\gamma,1}+\kappa M_{\gamma,2}(x^2+(L-x)^{(1+r)/2})\Big)\ e^{\beta(L)-\beta(x)},
\]
 $M_{\gamma,1}$ and $M_{\gamma,2}$ being the constants introduced in Proposition \ref{prop:iter}.
 \item The number of points of the modified algorithm $(A1)_{\rm shift}$, in the context of Proposition \ref{prop:inv:gauss}, satisfies:
 \[
 \mathbb{E}[\mathcal{N}_\Sigma]\le 2\Big(1+\kappa\frac{L-x}{\sqrt{2\gamma_0}}\Big)\ e^{\beta(L)-\beta(x)-(L-x)\sqrt{2\gamma_0} }.
 \]
 \end{enumerate}
\end{thm}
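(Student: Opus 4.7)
The plan is to reduce the theorem to the estimates already established, via a Wald-type identity for the total count $\mathcal{N}_\Sigma$. Starting from the decomposition $\mathcal{N}_\Sigma = \mathcal{I} + \sum_{i=1}^{\mathcal{I}}\mathcal{N}_i$ in \eqref{eq:def:nsigm}, I would write
\[
\mathbb{E}[\mathcal{N}_\Sigma]=\mathbb{E}[\mathcal{I}] + \sum_{i=1}^{\infty}\mathbb{E}\big[\mathcal{N}_i\,1_{\{\mathcal{I}\ge i\}}\big],
\]
and exploit two facts: the random inputs of distinct iterations are independent; and the event $\{\mathcal{I}\ge i\}$ (all the first $i-1$ iterations were rejected) is determined by iterations $1,\ldots,i-1$ only. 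Hence $\mathcal{N}_i$ is independent of $\{\mathcal{I}\ge i\}$ and the $\mathcal{N}_i$ are identically distributed, so $\sum_{i\ge 1}\mathbb{E}[\mathcal{N}_i\,1_{\{\mathcal{I}\ge i\}}] = \mathbb{E}[\mathcal{N}_1]\,\mathbb{E}[\mathcal{I}]$. This yields the master identity $\mathbb{E}[\mathcal{N}_\Sigma]=(1+\mathbb{E}[\mathcal{N}_1])\,\mathbb{E}[\mathcal{I}]$, from which both bounds will follow.

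For part (1), I would then invoke Proposition \ref{prop:number-of-I} (the equality $\mathbb{E}[\mathcal{I}]=e^{\beta(L)-\beta(x)}$) and Proposition \ref{prop:iter} (the bound on $\mathbb{E}[\mathcal{N}_1]$). To collapse the additional ``$1$'' into the stated prefactor of~$2$, I would remark that the constant $M_{\gamma,1}$ from Proposition \ref{prop:iter} may be enlarged so that $M_{\gamma,1}\ge 1$ without loss of generality; the bracketed term $1+M_{\gamma,1}+\kappa M_{\gamma,2}(x^2+(L-x)^{(1+r)/2})$ is then majorised by $2\big(M_{\gamma,1}+\kappa M_{\gamma,2}(x^2+(L-x)^{(1+r)/2})\big)$. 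For part (2), the same decomposition applies with $\mathbb{E}[\mathcal{I}_{\rm shift}]=e^{\beta(L)-\beta(x)-(L-x)\sqrt{2\gamma_0}}$ from Proposition \ref{prop:inv:gauss} and $\mathbb{E}[\mathcal{N}_1]\le 1+\kappa(L-x)/\sqrt{2\gamma_0}$ from Proposition \ref{prop:conv:modif}; the resulting factor $2+\kappa(L-x)/\sqrt{2\gamma_0}$ is trivially bounded by $2\big(1+\kappa(L-x)/\sqrt{2\gamma_0}\big)$.

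The only delicate step is the Wald-type identity: one must resist the temptation to worry that $\mathcal{N}_i$ is correlated with the rejection indicator of iteration $i$. This correlation does exist, but is irrelevant here, because the event $\{\mathcal{I}\ge i\}$ depends strictly on iterations preceding iteration $i$ and therefore factorises out of the expectation. Once this point is settled, everything else is a direct substitution of the previously proved propositions, together with the harmless slack used to produce the constant $2$ displayed in the statement.
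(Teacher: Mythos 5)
Your argument is correct, and it reaches the stated bounds by a genuinely different decomposition than the paper's. You prove the exact Wald-type identity $\mathbb{E}[\mathcal{N}_\Sigma]=(1+\mathbb{E}[\mathcal{N}_1])\,\mathbb{E}[\mathcal{I}]$, using that $\{\mathcal{I}\ge i\}$ is measurable with respect to iterations $1,\dots,i-1$ and hence independent of $\mathcal{N}_i$; this is the right way to dispose of the correlation between $\mathcal{N}_i$ and the acceptance indicator of iteration $i$, and your handling of that point is exactly what makes the identity legitimate. The paper instead conditions on $\{\mathcal{I}=k\}$, writes $\mathbb{E}[\mathcal{N}_1+\dots+\mathcal{N}_k\mid \mathcal{I}=k]=(k-1)\mathbb{E}[\mathcal{N}_1\mid A^c]+\mathbb{E}[\mathcal{N}_1\mid A]$, bounds the conditional expectations crudely by $\mathbb{E}[\mathcal{N}_1]/(1-p)$ and $\mathbb{E}[\mathcal{N}_1]/p$ with $p=e^{-(\beta(L)-\beta(x))}$, and sums the geometric series to get $\mathbb{E}[\mathcal{N}_\Sigma]\le \tfrac{2}{p}\,\mathbb{E}[\mathcal{N}_1]$, after which Propositions \ref{prop:number-of-I}, \ref{prop:iter}, \ref{prop:inv:gauss} and \ref{prop:conv:modif} are substituted exactly as you do. What each route buys: yours is exact up to the very last step (the factor $2$ appears only when you majorize $1+\mathbb{E}[\mathcal{N}_1]$ by $2\,\mathbb{E}[\mathcal{N}_1]$), and it honestly keeps the $\mathcal{I}$ summand from the definition \eqref{eq:def:nsigm}, which the paper's first line silently drops (harmlessly, given the slack in the factor $2$); the paper's route, conversely, matches the stated constant without any normalization of $M_{\gamma,1}$. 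Your ``without loss of generality $M_{\gamma,1}\ge 1$'' is indeed harmless here, since the proof of Proposition \ref{prop:iter} produces a bound of the form $1+\kappa\,\mathbb{E}[I_T]$, so the additive constant can be taken at least $1$; in part (2) no such remark is even needed, as you note. Both proofs require $\mathbb{E}[\mathcal{N}_1]<\infty$ and $\mathbb{E}[\mathcal{I}]<\infty$, which the cited propositions provide, and the interchange of sum and expectation in your identity is justified by nonnegativity; so there is no gap.
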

Let us notice in particular that the average number of random points is finite  which actually plays an important role for numerical purposes. 
\begin{proof}
We shall just prove the first item, the second one is based on similar arguments. Let us recall that $\mathcal{I}$ is the number of iterations in Algorithm (A1) and $A$ stands for the event \emph{Acceptance} in the acceptance-rejection method. Therefore
\begin{align}\label{eq:dev}
\mathbb{E}[\mathcal{N}_\Sigma]&=\mathbb{E}[\mathcal{N}_1+\ldots+\mathcal{N}_\mathcal{I}]=\sum_{k=1}^\infty\mathbb{E}[\mathcal{N}_1+\ldots+\mathcal{N}_k|\mathcal{I}=k]\mathbb{P}(\mathcal{I}=k)\nonumber \\
&=\sum_{k=1}^\infty \Big((k-1)\mathbb{E}[\mathcal{N}_1|A^c]+\mathbb{E}[\mathcal{N}_1|A]\Big)\ \mathbb{P}(\mathcal{I}=k).
\end{align}
Let us denote by $p=e^{-(\beta(L)-\beta(x))}$ and observe that $\mathbb{P}(A)=p$. Moreover we get the following upper bound
\[
\mathbb{E}[\mathcal{N}_1|A^c]=\frac{\mathbb{E}[\mathcal{N}_11_{A^c}]}{1-p}\le \frac{\mathbb{E}[\mathcal{N}_1]}{1-p}.
\]
We deduce that \eqref{eq:dev} becomes
\begin{align*}
\mathbb{E}[\mathcal{N}_\Sigma]&\le \mathbb{E}[\mathcal{N}_1]\Big( \sum_{k=1}^\infty \Big\{\frac{k-1}{1-p}+\frac{1}{p}\Big\} (1-p)^{k-1}p \Big)=\frac{2}{p}\,\mathbb{E}[\mathcal{N}_1].
\end{align*}
Proposition \ref{prop:iter} permits to conclude the proof.
\end{proof}

\subsection{Efficiency of Algorithm (A2)}
 In this section, $\mathcal{N}_1$ still represents the number of random points used for the first iteration of the algorithm. 

\begin{thm}\label{thm:iter-2} Let $\gamma$ satisfies Assumptions \ref{assu-1}--\ref{assu-3} and the particular condition  
\eqref{eq:condprop}. We consider Algorithm (A2) with $T$ corresponding to the Brownian first-passage time to the level $L$, with $x<L$.
Then there exist two constants $M_{\gamma,1}>0$ and $M_{\gamma,2}>0$ such that the total number of random points $\mathcal{N}_\sigma$ in Algorithm (A2) satisfies
\[
\mathbb{E}[\mathcal{N}_\Sigma]\le 
2\Big( M_{\gamma,1}+\kappa M_{\gamma,2}(x^2+(L-x)^{(1+r)/2})\Big)\ e^{\beta(L)-\beta(x)}.
\]
\end{thm}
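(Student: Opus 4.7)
The plan is to mimic the proof of Theorem \ref{thm:averagenumber} for Algorithm (A2). The reduction $\mathbb{E}[\mathcal{N}_\Sigma]\le(2/p)\,\mathbb{E}[\mathcal{N}_1]$, with $p=\mathbb{P}(\mbox{accept})=e^{-(\beta(L)-\beta(x))}$, relies only on the independence of successive iterations together with the inequality $\mathbb{E}[\mathcal{N}_1\mid A^c]\le\mathbb{E}[\mathcal{N}_1]/(1-p)$. Since the acceptance event $\{N(D_{\beta,T})=0\}$ and the underlying Poisson point process of unit intensity on $[0,T]\times[0,\kappa]$ are the same in (A1) and (A2), this reduction transfers verbatim. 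The remaining task is therefore to establish the same polynomial upper bound on $\mathbb{E}[\mathcal{N}_1]$ as in Proposition \ref{prop:iter}, despite the fact that (A2) enumerates the Poisson points in order of increasing height rather than increasing time.

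For the first-iteration analysis I would condition on $T$ and on the Bessel bridge $(R_t)_{0\le t\le T}$. In Algorithm (A2) the heights $\mathcal{E}_k$ form a cumulated $\mathrm{Exp}(T)$ sequence and the times $U_k$ are independent $\mathrm{Unif}(0,T)$ random variables. The order-statistics representation (given $\mathcal{E}_k=y$, the previous heights $\mathcal{E}_1,\dots,\mathcal{E}_{k-1}$ are the order statistics of $k-1$ i.i.d.\ $\mathrm{Unif}(0,y)$) combined with an argument parallel to the derivation of \eqref{eq:1}, followed by an integration by parts and the layer-cake identity $\int_0^y\mu(v)\,\dint v=\int_0^T(\gamma(L-R_t)\wedge y)\,\dint t$ with $\mu(y):=\lambda\{t\in[0,T]:\gamma(L-R_t)\ge y\}$, yields
\[
\mathbb{E}_c[\mathcal{N}_1]=\exp\Bigl\{-\int_0^T\gamma(L-R_t)\,\dint t\Bigr\}+T\int_0^\kappa\exp\Bigl\{-\int_0^T\bigl(\gamma(L-R_t)\wedge y\bigr)\,\dint t\Bigr\}\,\dint y.
\]
The first summand is at most $1$, so only the second summand requires estimation.

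The hard part is a lower bound on $\int_0^T(\gamma(L-R_t)\wedge y)\,\dint t$, and I would handle it in the spirit of Proposition \ref{prop:iter}. Using the scaling $R_{uT}=\sqrt{T}\,\hat{R}^T_u$ together with the stochastic domination $\hat{R}^T\le_{st}(L-x)/\sqrt{T}+\bar{R}$, introduce $\gamma_T$ as in \eqref{eq:defgammaT} and the event $E_T=\{\sup_{u\in[0,1]}\bar{R}_u\le T^\alpha\}$ for some $\alpha\in(0,1)$. On $E_T$ the inequality $\gamma(L-R_t)\ge\gamma_T$ holds for every $t\in[0,T]$, and splitting the $y$-integral at $\gamma_T$ together with $ze^{-z}\le e^{-1}$ yields
\[
T\int_0^\kappa e^{-T(\gamma_T\wedge y)}\,\dint y\le 1+\frac{\kappa}{e\,\gamma_T}.
\]
On $E_T^c$ one uses the crude bound $T\int_0^\kappa\cdots\,\dint y\le T\kappa$ together with the tail estimate $T\,\mathbb{P}(E_T^c)\le C_\alpha$ established in Proposition \ref{prop:iter}. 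Finally, taking expectation with respect to $T\sim(L-x)^2/G^2$ and distinguishing $T\ge(x+1)^{2/(1+2\alpha)}$ --- where \eqref{eq:condprop} gives $1/\gamma_T\le|x-T^{1/2+\alpha}|^r/C_\gamma$ --- from the opposite regime --- where the trivial bound $T\kappa\le\kappa(x+1)^2$ applies --- the choice $\alpha=(1-r)/(4r)$ ensures $r(1+2\alpha)=(1+r)/2<1$ and hence $\mathbb{E}[T^{r(1/2+\alpha)}]=(L-x)^{(1+r)/2}\mathbb{E}[G^{-(1+r)/2}]<\infty$. Assembling the terms produces
\[
\mathbb{E}[\mathcal{N}_1]\le M_{\gamma,1}+\kappa M_{\gamma,2}\bigl(x^2+(L-x)^{(1+r)/2}\bigr),
\]
and inserting this into the first step completes the proof.
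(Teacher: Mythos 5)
Your proposal is correct and follows essentially the same route as the paper: the same $2/p$ iteration argument from Theorem \ref{thm:averagenumber}, the same conditional analysis of $\mathcal{N}_1$ via the order-statistics representation of the cumulated exponentials, and the same reduction to the estimates of Proposition \ref{prop:iter} through $\gamma_T$, the Bessel-bridge maximum and the choice $\alpha=(1-r)/(4r)$. Your intermediate identity for $\mathbb{E}_c[\mathcal{N}_1]$ is exactly the paper's formula \eqref{eq:expect=} after an integration by parts in the $y$-variable, so the two computations differ only cosmetically.
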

The statement of Theorem \ref{thm:iter-2} is similar to Theorem \ref{thm:averagenumber} but the arguments of proof are slightly different.
\begin{proof}
We recall that $\mathbb{P}_c$ stands for the conditional probability given the first-passage time $T$ and the Bessel bridge $(R_t)$ ending with $R_T=L-x$. We observe that
 \begin{equation}\label{eq:dev1}
 \mathbb{P}_c(\mathcal{N}_1>1)=\mathbb{P}_c(\gamma(L-R_{U_1})\le e_1\le \kappa),
 \end{equation}
 where $U_1$ is uniformly distributed on the interval $[0,T]$ and $e_1$ is an exponentially distributed random variable independent of $U_1$ and with average $1/T$. Hence \eqref{eq:dev1} becomes
\begin{align}
\label{eq:dev2}
&\mathbb{P}_c(\mathcal{N}_1>1)=\int_0^T\int_0^\infty 1_{\{ \gamma(L-R_{u})\le t\le \kappa \}}\ e^{-Tt}\,\dint u\,\dint t\nonumber\\
&\quad \quad=\frac{1}{T}\, \int_0^T \Big(  e^{-T\gamma(L-R_u)}-e^{-T\kappa} \Big)\ \dint u=\mathbb{E}_c[e^{-T\gamma(L-R_{U_1})}]-e^{-T\kappa}.
\end{align} 
Let us generalize these computations.
We introduce $(e_n,\ n\ge 0)$ a sequence of exponentially distributed random variables with average $1/T$, $\mathcal{E}_n:=e_1+\ldots+e_n$ and $(U_n,\ n\ge 0)$ a sequence of uniform distributed independent r.v. on the interval $[0,T]$, all r.v. being independent. Moreover let us denote by $F_c(t)=\gamma(L-R_t)$ for any $t\in[0,T]$ (deterministic function under the conditional distribution $\mathbb{P}_c$) and $G_c(y)=\frac{1}{T}\int_0^T1_{\{ y\ge F_c(t) \}}\,dt$.  For $k>1$, we get
\begin{align}
\label{eq:dev3}
\mathbb{P}_c(\mathcal{N}_1>k)&=\mathbb{P}_c\Big(F_c(U_1)\le \mathcal{E}_1,\ F_c(U_2)\le \mathcal{E}_2,\ldots, F_c(U_k)\le \mathcal{E}_k<\kappa\Big).
\end{align}
 Let us note that, under $\mathbb{P}_c$, we have on one hand $\mathcal{E}_k\sim\gamma(k,\frac{1}{T})$ and on the other hand:
 \[
 \Big(\mathcal{E}_1,\mathcal{E}_2,\ldots,\mathcal{E}_{k-1},\mathcal{E}_k\Big)\sim \Big(V^{(1)}\mathcal{E}_k, V^{(2)}\mathcal{E}_k,\ldots,V^{(k-1)}\mathcal{E}_k,\mathcal{E}_k\Big)
\] 
where $(V^{(1)} ,V^{(2)} ,\dots,V^{(k-1)} )$ are ordered variables corresponding to the sequence of uniform random on $[0,1]$: $(V_1,V_2,\ldots,V_{k-1}).$ 
\begin{align*}
\mathbb{P}_c(\mathcal{N}_1>k)&=\mathbb{P}_c(F_c(U_1)\le V^{(1)}\mathcal{E}_k,\ F_c(U_2)\le V^{(2)}\mathcal{E}_k,\ldots, F_c(U_k)\le \mathcal{E}_k<\kappa)\\
&=\mathbb{E}_c\Big[ G_c(V^{(1)}\mathcal{E}_k)G_c(V^{(2)}\mathcal{E}_k) \ldots G_c(V^{(k-1)}\mathcal{E}_k)G_c(\mathcal{E}_k)1_{\{ \mathcal{E}_k\le \kappa \}} \Big]\\
&=\mathbb{E}_c\Big[ G_c(V_1\mathcal{E}_k)G_c(V_2\mathcal{E}_k) \ldots G_c(V_{k-1}\mathcal{E}_k)G_c(\mathcal{E}_k)1_{\{ \mathcal{E}_k\le \kappa \}} \Big]\\
&=\mathbb{E}_c\Big[ \mathcal{G}_c^{k-1}(\mathcal{E}_k)G_c(\mathcal{E}_k)1_{\{ \mathcal{E}_k\le \kappa \}} \Big].
\end{align*}
Here $\mathcal{G}_c$ stands for $\mathcal{G}_c(y)=\int_0^1 G_c(ty)\,\dint t$.
\begin{align*}
\mathbb{P}_c(\mathcal{N}_1>k)&=\frac{T^k}{\Gamma(k)}\int_0^\kappa x^{k-1} \mathcal{G}_c^{k-1}(x)G_c(x)\ e^{-Tx}\ \dint x.
\end{align*}
We deduce
\begin{align*}
\mathbb{E}_c[\mathcal{N}_1]&=\sum_{k\ge 0}\mathbb{P}_c(\mathcal{N}_1>k)=1+T\sum_{k\ge 0}\int_0^\kappa \frac{(xT\mathcal{G}_c(x))^k}{k!}\ G_c(x) e^{-Tx}\ \dint x\\
&=1+T\int_0^\kappa G_c(x) \exp\Big\{ xT(\mathcal{G}_c(x)-1) \Big\}\ \dint x. 
\end{align*}
Taking the expectation with respect to both $T$ and the Bessel bridge $(R_t)$, we obtain
\begin{equation}
\label{eq:expect=}
\mathbb{E}[\mathcal{N}_1]=1+\int_0^\kappa \mathbb{E}\Big[T\ G_c(x) \exp\Big\{ xT(\mathcal{G}_c(x)-1) \Big\}\Big]\ \dint x.
\end{equation}
Let us now consider some upper-bound. First we note that $0\le G_c(x)\le 1$ for any $x\in\mathbb{R}$. Let us now consider $\mathcal{G}_c(x)$ using a change of variable:
\begin{align*}
\mathcal{G}_c(x)&=\frac{1}{T}\int_0^1\Big(\int_0^T1_{\{ ux\ge \gamma(L-R_t) \}}\,\dint t\Big)\ \dint u\\
&\overset{\Delta}{=}\int_0^1\int_0^1 1_{\{ ux\ge \gamma(L-\sqrt{T}\hat{R}_w^T) \}}\dint w\,\dint u,
\end{align*}
where $(\hat{R}_w^T,\, 0\le w\le 1)$ is a $3$-dimensional Bessel bridge  with condition $\hat{R}_1^T=(L-x)/\sqrt{T}$, as considered in the proof of Proposition \ref{prop:iter}. For $x>0$,
\[
\mathcal{G}_c(x)\le 1-\frac{1}{x}\inf_{y\in[L-\sqrt{T}\Pi_T,L]}\gamma(y),\quad
\mbox{with}\quad \Pi_T=\sup_{w\in[0,1]}\hat{R}_w^T.
\]
By \eqref{eq:expect=} and the previous inequalities, we obtain:
\begin{equation}\label{eq:adapt}
\mathbb{E}[\mathcal{N}_1]\le 1+\int_0^\kappa \mathbb{E}\Big[T \exp-T \inf_{y\in[L-\sqrt{T}\Pi_T,L]} \gamma(y)\Big]\,\dint x.
\end{equation}
Let us just note that the variable $\Pi_T$ depends on $x$.  Using the obvious upper-bound $Te^{-T\gamma_T}\le \int_0^1 Te^{-T\gamma_T u}\,\dint u$ and the definition of $\gamma_T$ in \eqref{eq:defgammaT}, we observe that all arguments developed in the proof of Proposition \ref{prop:iter} and in Theorem \ref{thm:averagenumber}, from \eqref{eq:defgammaT} on, can be adapted to  \eqref{eq:adapt}. 
\end{proof}

\mathversion{bold}
\section{When the diffusion hits the level $L$ before a given fixed time $t_0$}\label{sec:before} \mathversion{normal}
Both Algorithm (A1) and (A2) concern first-passage times for the diffusion $(X_t,\, t\ge 0)$ defined in \eqref{eq:simple-sde} under some particular conditions.  The boundedness of $\gamma:=\frac{b^2+b'}{2}$ on $]-\infty,L]$ is indisputably the most restrictive condition (Assumption \ref{assu-3}). Unfortunately the result presented so far cannot be generalized to the particular situation when $\gamma$ takes some negative values. Nevertheless we can adapt the algorithms by considering $\tau_L|\tau_L\le t_0$ instead of $\tau_L$, defined by \eqref{eq:def:tau}, for any fixed time $t_0$. 
 
 Let us consider a modification of the theoretical Algorithm $(A_0)$. The idea is to focus our attention to random times which are a.s. smaller than some given value $t_0$.  
\begin{assumption}\label{assu-5} There exists two constants $\kappa>0>-m$ such that
\[
-m\le \gamma(x)\le \kappa,\quad \forall x\le L.
\]
\end{assumption} 
 
\begin{framed}\emph{
\noindent {\bf\sc Algorithm (A3).} 
\\[3pt]
Let us fix the level $L>x$, $x$ being the initial value of the diffusion.\\
{\bf\rm  Step 1:} Simulate a non-negative random variable $T$ with p.d.f. $f_T$ of support $[0,t_0]$. \\[3pt]
{\bf\rm Step 2:} Simulate a $3$-dimensional Bessel process $(R_t)$ on the time interval $[0,T]$ with endpoint $R_T=L-x$. We define by $D_{R,T}^m$ the stochastic domain satisfying:
\[
D_{R,T}^m:=\Big\{(t,v)\in[0,T]\times\mathbb{R}_+:\ v\le \gamma(L-R_t) +\frac{m t_0}{T} \Big\}.
\]
{\bf\rm Step 3:} Simulate a Poisson point process $N$ on the state space $[0,T]\times\mathbb{R}_+$, independent of the Bessel process,  whose intensity measure is the Lebesgue one.\\[3pt]
{\bf\rm Step 4:} If $N(D_{R,T}^m)=0$ then set $Y=T$ otherwise go to Step 1.\\[5pt]
{\bf\rm Outcome:} the random variable $Y$.}
\end{framed}
It is obvious that the support of the random variable $Y$ is contained in $[0,t_0]$ by construction. 
\begin{prop}
\label{prop:exten-neg} Let $\gamma$ a function satisfying Assumptions \ref{assu-1}, \ref{assu-2} \& \ref{assu-5} and $G$ a standard gaussian r.v.  If $T$ has the same distribution as the conditional distribution of $(L-x)^2/G^2$ given $(L-x)^2/G^2\le t_0$, then the random variable $Y$ obtained by Algorithm (A3) has the same distribution as $\tau_L$ given $\tau_L\le t_0$. 
\end{prop}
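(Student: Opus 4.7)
The plan is to mimic the proof of Theorem \ref{thm:dens} almost verbatim, being careful about two modifications introduced in Algorithm (A3): the shift $m t_0/T$ appearing in the definition of $D_{R,T}^m$, and the fact that $T$ is supported on $[0,t_0]$. The first thing I would check is that the rejection step makes sense, i.e.\ that the intensity defining $D_{R,T}^m$ is non-negative. Since $T\le t_0$ almost surely by Step~1, we have $m t_0/T\ge m$, and hence
\[
\gamma(L-R_t)+\frac{m t_0}{T}\ \ge\ \gamma(L-R_t)+m\ \ge\ 0
\]
by Assumption \ref{assu-5}. So $D_{R,T}^m$ is indeed a subset of $[0,T]\times\mathbb{R}_+$ of finite Lebesgue measure (thanks to $\gamma\le\kappa$), and the Poisson point process construction goes through.

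Next I would repeat the computation of the conditional acceptance probability from the proof of Theorem \ref{thm:dens}. Conditioning on $T$ and on the Bessel bridge, the deterministic shift contributes a term $\int_0^T \frac{m t_0}{T}\,\dd t = m t_0$ that does not depend on $T$. Consequently
\[
\PP\big(N(D_{R,T}^m)=0\,\big|\,T\big)\ =\ e^{-m t_0}\,\E\Big[\exp\!-\!\int_0^T\gamma(L-R_t)\,\dd t\,\Big|\,R_T=L-x,T\Big]\ =\ e^{-m t_0}\,\eta(T),
\]
with $\eta$ as in \eqref{prop:eq:2}. This is precisely the point of scaling the shift by $t_0/T$ rather than taking a bare constant $m$: it yields a factor independent of $T$, which will cancel in normalization. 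Reproducing the geometric-sum argument of the proof of Theorem \ref{thm:dens} then gives, for any bounded measurable $\psi$,
\[
\E[\psi(Y)]\ =\ \frac{\E[\psi(T)\,\eta(T)]}{\E[\eta(T)]}.
\]

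It remains to identify the right-hand side with $\E[\psi(\tau_L)\mid \tau_L\le t_0]$. By hypothesis, $T$ is distributed as $(L-x)^2/G^2$ conditioned on being $\le t_0$, so its density is proportional to $f_{\tau_L^B}(t)\,\ind{[0,t_0]}(t)$, where $f_{\tau_L^B}$ denotes the Brownian first-passage density through level $L$ starting from $x$. Using Proposition \ref{prop:Girsanov} with $\psi$ replaced by $\psi\,\ind{[0,t_0]}$, the numerator $\E[\psi(T)\,\eta(T)]$ is proportional to
\[
\E_\mathbb{Q}\big[\psi(\tau_L)\,\ind{\tau_L\le t_0}\,\eta(\tau_L)\big]\ =\ e^{-(\beta(L)-\beta(x))}\,\E_\mathbb{P}\big[\psi(\tau_L)\,\ind{\tau_L\le t_0}\big],
\]
and choosing $\psi\equiv 1$ identifies the normalizing constant with $\PP(\tau_L\le t_0)$ times the same proportionality factor. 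The ratio therefore equals $\E_\mathbb{P}[\psi(\tau_L)\ind{\tau_L\le t_0}]/\PP(\tau_L\le t_0)$, which is exactly $\E[\psi(\tau_L)\mid\tau_L\le t_0]$.

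The only subtle step I anticipate is verifying the well-posedness of the Girsanov identity when restricted to $\{\tau_L\le t_0\}$, but this follows immediately from \eqref{prop:eq:1} by taking the bounded measurable function $\psi(\cdot)\ind{[0,t_0]}(\cdot)$ in place of $\psi$. So no genuine new obstacle arises beyond recognising that the scaling $m t_0/T$ is exactly what is needed to make the extra factor $T$-independent.
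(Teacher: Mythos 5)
Your proposal is correct and follows essentially the same route as the paper: you verify non-negativity of the shifted intensity via $T\le t_0$, compute the conditional acceptance probability as $e^{-mt_0}\eta(T)$ with the $T$-independent factor cancelling in the normalization, and then apply Proposition \ref{prop:Girsanov} with $\psi$ multiplied by the indicator of $\{\tau_L\le t_0\}$ (and $\psi\equiv 1$ for the normalizing constant) to identify the law of $Y$ with that of $\tau_L$ given $\tau_L\le t_0$. This is exactly the two-step argument of the paper's proof.
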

\begin{proof}
\emph{Step 1.} Let us first describe the dependence of the distribution of the outcome $Y$ with respect to the distribution of the input $T$. Let us also note that, due to the condition on the function $\gamma$, $\gamma(L-R_t) +\frac{m t_0}{T}\ge 0$ for any $t$ and $T\le t_0$. 
We deduce that
\begin{align*}
\mathbb{P}( N(D_{R,T}^m)=0| R, T )&=\exp-\int_0^T\Big(\gamma(L-R_t) +\frac{m t_0}{T} \Big)\,\dint t\\
&=e^{-mt_0}\exp-\int_0^T\gamma(L-R_t)\,\dint t. 
\end{align*}
Following the arguments developed in Theorem \ref{thm:dens}, we obtain,  for any non-negative or bounded function  $\psi$,
\begin{align}\label{eq:linkY}
\mathbb{E}[\psi(Y)]&=\frac{1}{\mathbb{P}(N(D_{R,T}^m)=0)}\mathbb{E}\Big[ \psi(T)\mathbb{P}(N(D_{R,T}^m)=0|T)\Big]\nonumber \\
&=\frac{\mathbb{E}[\psi(T) e^{-mt_0}\eta(T)]}{\mathbb{E}[e^{-mt_0}\eta(T)]}=\frac{\mathbb{E}[\psi(T)\eta(T)]}{\mathbb{E}[\eta(T)]}.
%&=\mathbb{E}\Big[ \psi(T^{(1)})\frac{1_{\overline{E}_1}}{\mathbb{P}(\overline{E}_1)}\Big],
\end{align}
where $\eta$ is defined in \eqref{prop:eq:2}.
In other words, the \emph{p.d.f.} of the outcome $Y$ is given by
\[
f_Y(t)=\frac{1}{\Xi}\,\eta(t)f_T(t), 
\]
where $\Xi$ is the normalization coefficient. The result is quite similar to the statement of Theorem \ref{thm:dens}. The differences are the following: on one hand, the function $\gamma$ can take negative values, on the other hand the support of the distribution of $T$ has to be compact.\\
\emph{Step 2.} Let us now link the conditional distributions of $Y$ and $\tau_L$. Girsanov's transformation, see Proposition \ref{prop:Girsanov}, implies
\begin{equation}\label{eq:numer}
\mathbb{E}_\mathbb{P}[\psi(\tau_L)1_{\{ \tau_L\le t_0 \}}]=\mathbb{E}_\mathbb{Q}[\psi(\tau_L)1_{\{ \tau_L\le t_0 \}}\eta(\tau_L)]\exp \Big\{\beta(L)-\beta(x)\Big\},
\end{equation}
where $\mathbb{P}$ (resp. $\mathbb{Q}$) corresponds to the distribution of the diffusion $X$ (resp. the Brownian motion $B$). The particular case $\psi \equiv 1$ leads to:
\begin{equation}\label{eq:denom}
\mathbb{P}(\tau_L\le t_0)=\mathbb{E}_\mathbb{Q}[1_{\{ \tau_L\le t_0 \}}\eta(\tau_L)]\exp \Big\{\beta(L)-\beta(x)\Big\}.
\end{equation}
Combining the ratio between \eqref{eq:numer} and \eqref{eq:denom} on one side and the equation \eqref{eq:linkY} on the other side leads to 
\begin{align*}
\mathbb{E}_{\mathbb{P}}[\psi(\tau_L)|\tau_L\le t_0]&=\frac{\mathbb{E}_\mathbb{Q}[\psi(\tau_L)1_{\{ \tau_L\le t_0 \}}\eta(\tau_L)]}{\mathbb{E}_\mathbb{Q}[1_{\{ \tau_L\le t_0 \}}\eta(\tau_L)]}=\frac{\mathbb{E}_\mathbb{Q}[\psi(\tau_L)\eta(\tau_L)| \tau_L\le t_0 ]}{\mathbb{E}_\mathbb{Q}[\eta(\tau_L)|\tau_L\le t_0 ]}\\
&=\frac{\mathbb{E}[\psi(T)\eta(T)]}{\mathbb{E}[\eta(T)]}=\mathbb{E}[\psi(Y)],
\end{align*}
where $T$ has the same law as $\tau_L|\tau_L\le t_0$ under the probability measure $\mathbb{Q}$ (Brownian hitting time). In order to conclude, it suffices to note that, under $\mathbb{Q}$, the random variable $\tau_L$ and $(L-x)^2/G^2$ are identically distributed.
\end{proof}
Of course, for simulation purposes, we don't use directly Algorithm (A3) since it is impossible to simulate the Poisson point process $N$ on the whole unbounded state space $[0,T]\times\mathbb{R}_+$. So we apply exactly the same procedures developed in Section \ref{sec:1} which transform Algorithm (A0) into (A1) or (A2). Such procedures modify Algorithm (A3) and permit to easily execute numerical simulations (Section \ref{sec:num:cond}).

\section{Unbounded drift terms}
\label{sec:unbounded}
The aim of this section is to handle with drift terms $b$ which do not satisfy the condition $\gamma(x)\le \kappa$ for any $x\le L$. From now on, we shall just assume that
\begin{equation}
\label{eq:cond:light}
\gamma(x)\ge 0,\quad \forall x\le L.
\end{equation}
The algorithms (A1) and (A2) cannot be used in such a situation since the domain associated to the Poisson point process is unbounded. One way to overcome this difficulty is to replace the drift term of the diffusion by a modified version: for a given large parameter $\rho$, 
\begin{equation}\label{eq:mod:drift}
b_\rho(x)=\left\{\begin{array}{ll}
b(x) &\mbox{if}\ -\rho\le x\le L\\
b(-\rho)+b'(-\rho)(x+\rho)e^{x+\rho}&\mbox{if}\ x<-\rho.
\end{array}
\right.
\end{equation}
In other words, it suffices to keep the function $b$ on the interval $[-\rho,L]$ and to extend this function in a regular way to $]-\infty,L]$ in order to get a bounded function. Here $\lim_{x\to-\infty}b_\rho(x)=b(-\rho)$, the boundedness being verified.

Obviously $b_\rho$ is a $\mathcal{C}^1$-continuous function and both Algorithm (A1) and (A2) can be applied to the diffusion associated to this new drift term. The diffusion is denoted $X^\rho$. %We can compute the associated function $\gamma$
%\[
%\gamma_\rho:=\frac{b_\rho^2+b_\rho'}{2}.
%\]
%Of course such a procedure does not lead to an exact simulation of $\tau_L$ but to an approximation of it denoted by $\tau_L^\rho$. Let us denote by Algorithm ${\rm (A1)}^\rho$ this modified algorithm ($\gamma$ replaced by $\gamma_\rho$ in Algorithm (A1)) and let us study this approximation.  We introduce the following function
%\[
%p(x):=\int_0^x e^{-2\beta(y)}\,dy \quad \mbox{with}\quad \beta(y)=\int_0^yb(r)\,dr.
%\]
\begin{prop}\label{prop:unbounded} Let the function $\gamma$ satisfy Assumption \ref{assu-1}. Moreover we assume that $\lim_{x\to-\infty}p(x)=-\infty$ where $p$ is defined in \eqref{eq:def:3}. Then the distribution of $\tau_L^\rho$ converges towards the distribution of $\tau_L$ as $\rho\to\infty$. Moreover the Kolmorogov distance satisfies:
\[
d(\tau_L,\tau_L^\rho):=\sup\Big\{|F_{\tau_L^\rho}(t)-F_{\tau_L}(t)   |\ : \ t\in\mathbb{R}_+\Big\}=\mathcal{O}\Big(-p(-\rho)\Big)\ \mbox{as}\ \rho\to\infty. 
\]
Here $F_Y(x)$ stands for the cumulative distribution function of the r.v. $Y$.
\end{prop}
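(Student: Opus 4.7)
The plan is to construct a pathwise coupling of $X$ and $X^\rho$ driven by the same Brownian motion, use pathwise uniqueness to show that the two processes coincide until the first exit from $[-\rho,L]$, and then bound the Kolmogorov distance by the probability of this coincidence failing. This last probability is computed explicitly through the scale function.

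First, let $B$ be the Brownian motion driving $X$, and define $X^\rho$ as the strong solution of $\dint X_t^\rho=b_\rho(X_t^\rho)\dint t+\dint B_t$ with $X_0^\rho=x$. A direct check at $y=-\rho$ in \eqref{eq:mod:drift} shows $b_\rho(-\rho^-)=b(-\rho)$ and $b_\rho'(-\rho^-)=b'(-\rho)$, so $b_\rho\in\mathcal{C}^1(\mathbb{R})$; together with Assumption \ref{assu-1} this provides pathwise uniqueness for both SDEs. Since $b\equiv b_\rho$ on $[-\rho,L]$, setting
\[
\sigma_{-\rho}:=\inf\{t\ge 0:X_t=-\rho\},
\]
uniqueness yields $X_t=X_t^\rho$ for every $t\in[0,\tau_L\wedge\sigma_{-\rho}]$. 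In particular, on the event $\{\tau_L\le\sigma_{-\rho}\}$ both processes reach $L$ at the same instant, so $\tau_L^\rho=\tau_L$.

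Second, I would deduce the uniform bound
\[
|F_{\tau_L}(t)-F_{\tau_L^\rho}(t)|=\Big|\mathbb{E}[\mathbf{1}_{\tau_L\le t}-\mathbf{1}_{\tau_L^\rho\le t}]\Big|\le \mathbb{P}(\sigma_{-\rho}<\tau_L),\quad \forall t\ge 0,
\]
since the two indicators agree on $\{\tau_L\le\sigma_{-\rho}\}$. Taking the supremum in $t$ controls the Kolmogorov distance by this single quantity, which is independent of $t$.

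Third, I would evaluate $\mathbb{P}_x(\sigma_{-\rho}<\tau_L)$ through the scale function. Since $p'(y)=e^{-\beta(y)}$, the function $p$ coincides (up to an affine transformation) with the scale function of $X$ on every interval contained in $(-\infty,L]$, so the classical two-sided exit formula yields
\[
\mathbb{P}_x(\sigma_{-\rho}<\tau_L)=\frac{p(L)-p(x)}{p(L)-p(-\rho)}.
\]
Under the assumption $\lim_{\rho\to\infty}p(-\rho)=-\infty$, the right-hand side is equivalent to $(p(L)-p(x))/(-p(-\rho))$ and hence tends to $0$ at the announced rate (the statement being read in terms of the natural scale $|p(-\rho)|\to\infty$). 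Weak convergence $\tau_L^\rho\Rightarrow\tau_L$ is then a direct consequence of this uniform convergence of cumulative distribution functions.

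The only delicate point is the justification of the pathwise coincidence: it relies on the $\mathcal{C}^1$-regularity of $b_\rho$ at the glueing point $-\rho$ (which the specific extension in \eqref{eq:mod:drift} is designed to ensure) together with the identity $b\equiv b_\rho$ on $[-\rho,L]$. Once this step is granted, the remaining arguments reduce to the standard scale function computation.
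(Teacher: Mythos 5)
Your proposal is correct and follows essentially the same route as the paper: both rest on the fact that $X$ and $X^\rho$ agree until the first visit to $-\rho$, bound the difference of distribution functions by the probability of reaching $-\rho$ before $L$, and evaluate that probability with the scale function $p$, giving the rate $\mathcal{O}\bigl(1/(-p(-\rho))\bigr)$. The only (minor) difference is that you phrase the agreement as a pathwise coupling via local pathwise uniqueness, which even sharpens the paper's distributional argument by replacing its factor $2\,\mathbb{P}(\tau_{-\rho}<\tau_L)$ with $\mathbb{P}(\tau_{-\rho}<\tau_L)$.
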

In practice, in order to approximate $\tau_L$ we simulate $\tau_L^\rho$ using one of the algorithms presented before, with the parameter $\rho$ large enough. Let us notice that the condition $\lim_{x\to-\infty}p(x)=-\infty$ appearing in the statement of Proposition \ref{prop:unbounded} implies Assumption \ref{assu-2} as mentioned in Remark \ref{rem}.
\begin{proof}
The arguments are quite straightforward: let us consider $x\in\mathbb{R}_+$, then
\begin{align*}
F_{\tau_L}(t)&=\mathbb{P}(\tau_L\le t,\, \tau_L\le \tau_{-\rho})+\mathbb{P}(\tau_L\le t,\, \tau_L> \tau_{-\rho}).
\end{align*}
Since $X^\rho$ and $X$ have the same distribution before exiting from the interval $[-\rho,L]$ (same drift term on this space subset), we obtain
\begin{align*}
|F_{\tau_L}(t)-F_{\tau_L^\rho}(t)|&=|\mathbb{P}(\tau_L\le t,\, \tau_L> \tau_{-\rho})-\mathbb{P}(\tau_L^\rho\le t,\, \tau_L^\rho> \tau_{-\rho}^\rho)|\\
&\le 2\, \mathbb{P}(\tau_L> \tau_{-\rho})=2\, \frac{p(L)-p(x)}{p(L)-p(-\rho)}\sim -\frac{2}{p(-\rho)}.
\end{align*}
\end{proof}
The efficiency of this algorithm is already described in Section \ref{sec:efficiency} and numerical results are presented in Section \ref{sec:num-unbouned}.

\section{Examples and numerics}
\label{sec:numerics}
In this section we study the performances of the different algorithms  proposed through the paper in order to simulate $\tau_L$ the first-passage time of the diffusion starting in $x$ with $x<L$. Let us just give a quick review in the following table describing which algorithm should be used when the function $\gamma$ satisfies particular bounds on the interval $]-\infty,L]$.\\[5pt]
\renewcommand{\arraystretch}{1.5}
\centerline{\begin{tabular}{|c|c|c|}
\hline
Condition on $\gamma$ & r.v. simulated & Algorithm and statement\\
\hline
\hline
$0\le \gamma(x)\le \kappa$ & $\tau_L$ &  (A1) or (A2) in Proposition \ref{prop:A1:conv}\\
\hline
$0<\gamma_0\le \gamma(x)\le \kappa$ &  $\tau_L$ & ${\rm (A1)_{shift}}$ or ${\rm (A2)_{shift}}$ in Prop. \ref{prop:inv:gauss} \\
\hline
$-m\le  \gamma(x)\le \kappa$ & $\tau_L$ given $\tau_L\le t_0$ &   (A3) in Prop. \ref{prop:exten-neg}\\
\hline
$0 \le \gamma(x)$ & $\tau_L^\rho$ (approx.) &   ${\rm (A1)^\rho}$ or ${\rm (A2)^\rho}$ in Prop. \ref{prop:unbounded}\\
\hline
\end{tabular}}\\[7pt]
Let us recall that the efficiency of all these algorithms can be increased using the \emph{space splitting} argument proposed in Section \ref{sec:number-ite}.

 In order to illustrate results stem from simulations, we consider the first hitting time of a Brownian motion with drift $\mu$. In this case, a closed form of the \emph{p.d.f.} for $\tau_L$ is available (see, for instance \cite{K-S} p. 197):
\[
\mathbb{P}(\tau_L\in \dint t)=\frac{L-x}{\sqrt{2\pi t^3}}\exp-\frac{(L-x-\mu t)^2}{2t}\, \dint t,\quad t> 0.
\] 
\noindent \begin{minipage}{6cm}
Since the level $L$ is larger than the starting position $x$, Assumption \ref{assu-2} will be satisfied if and only if $\mu>0$. \\

In the opposite figure, there is a comparison between the theoretical probability density function of $\tau_L$ and its normalized histogram ($10\, 000$ simulations) obtained by Algorithm (A1).\\

Here $b(x)\equiv 1$, $x=0$ and level $L=2$.
\end{minipage}\hfill
\begin{minipage}{7cm}
\centering
\includegraphics[width=7cm]{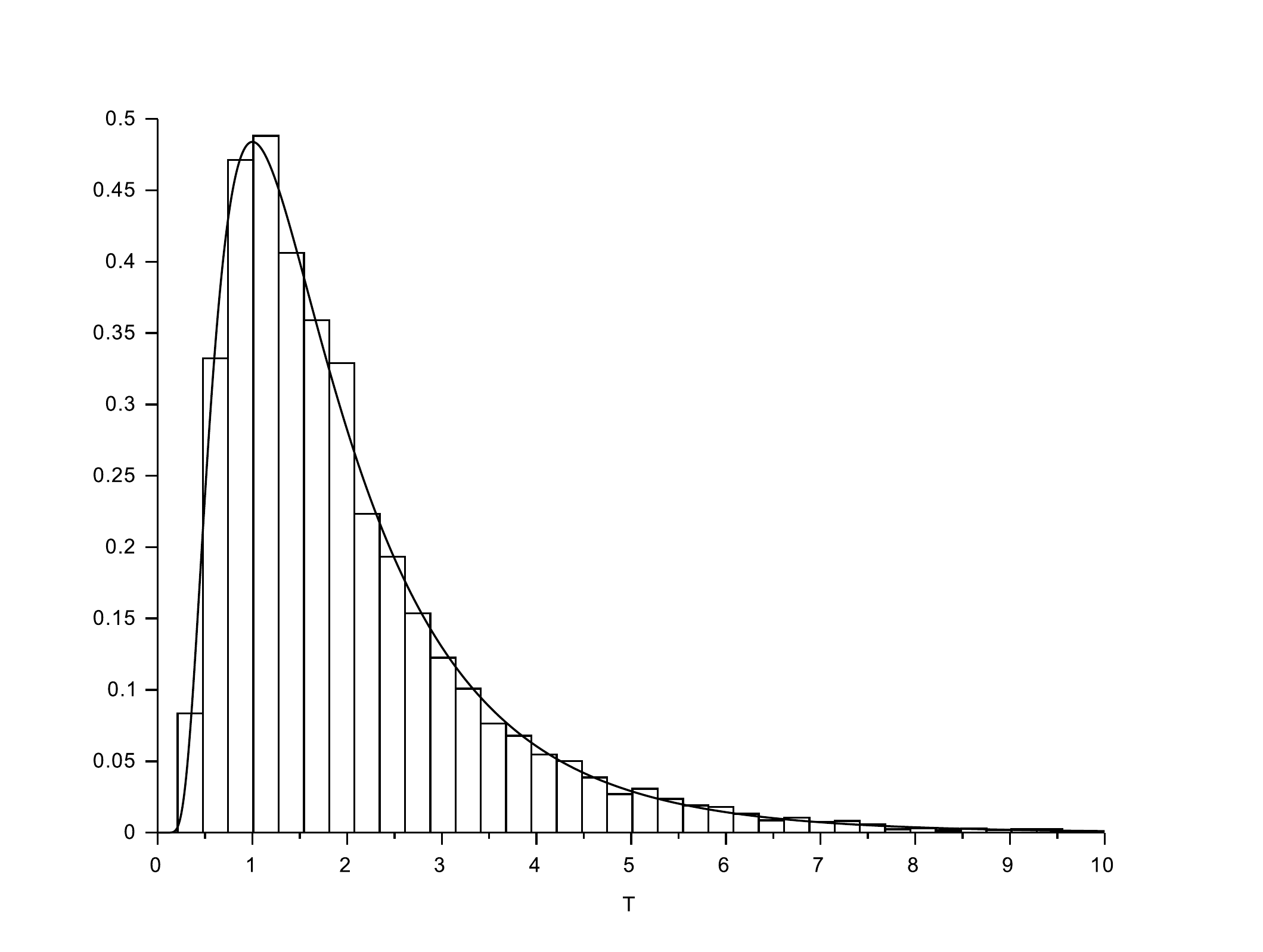}
\end{minipage}
 
\subsection{Algorithms (A1) and (A2)}
We shall compare Algorithm (A1) and (A2), both of them simulating exactly the first-passage time $\tau_L$ for a diffusion process satisfying Assumption \ref{assu-1}--\ref{assu-3}.\\[5pt]
{\bf Example 1.}
We consider the following stochastic differential equation: %already used in \cite{beskos2005exact} in order to obtain numerics:
\begin{equation}\label{eq:ex1}
dX_t=(2+\sin(X_t))\,dt+dB_t,\quad t\ge 0, \quad X_0=0.
\end{equation}
We first observe that $\gamma(x)=(b^2(x)+b'(x))/2=((2+\sin(x))^2+\cos(x))/2$ satisfies $0\le \gamma\le 5$. We use Algorithm (A1) in order to produce an histogram of the hitting time distribution (Figure \ref{Fig:1}).
\begin{figure}[h]
\includegraphics[width=6cm]{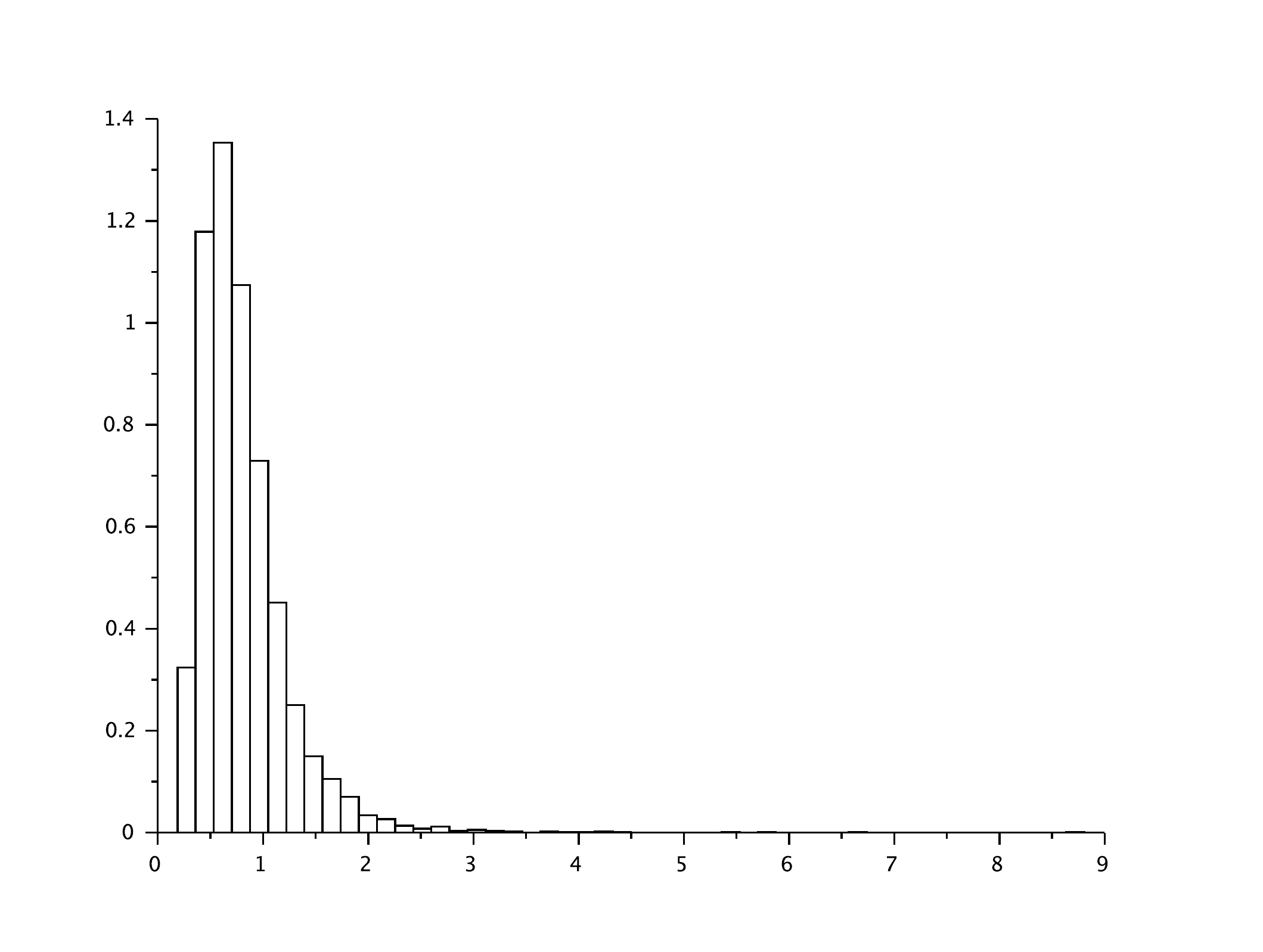}
\hspace*{0.1cm}
\includegraphics[width=6cm]{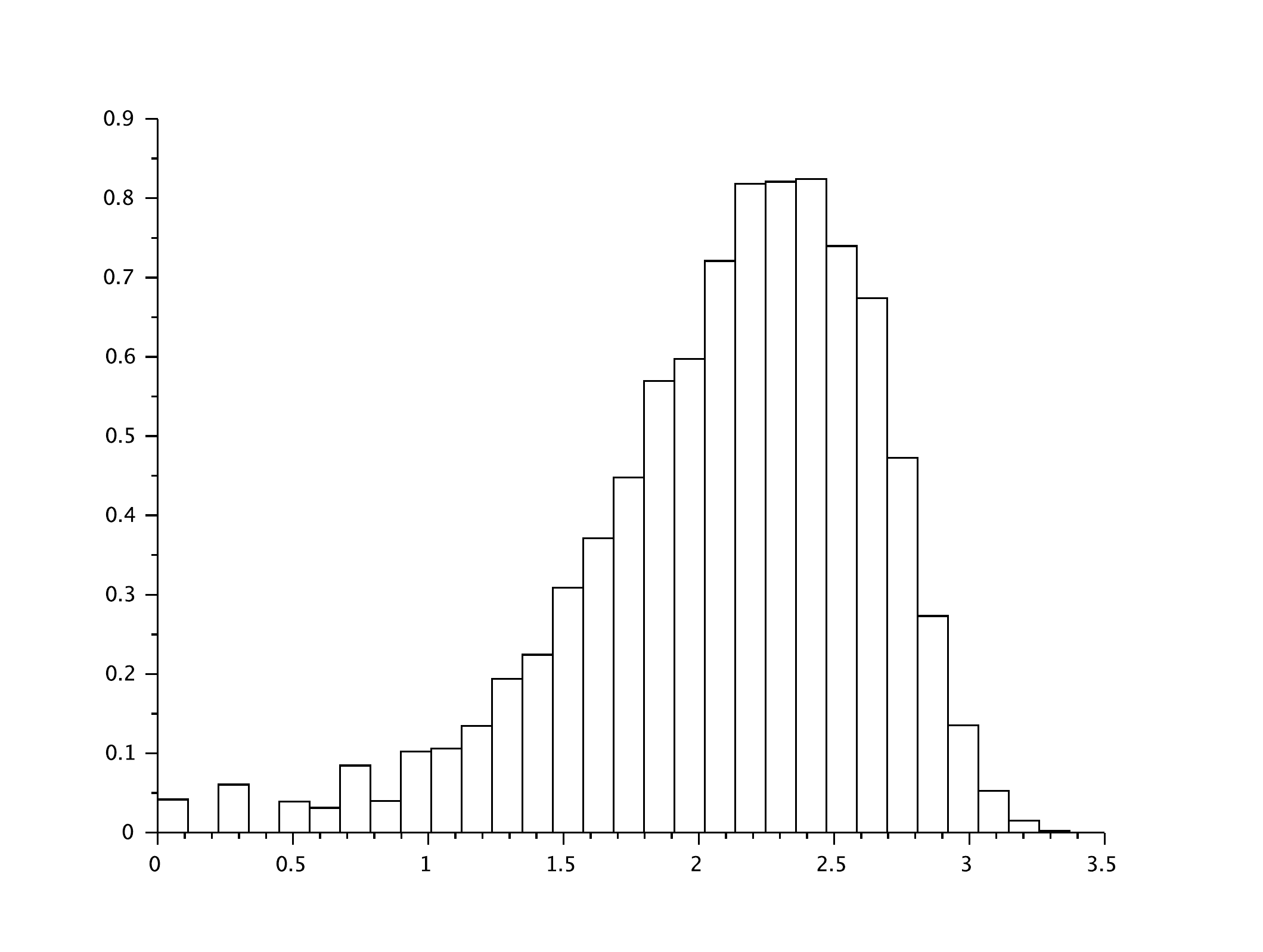}
\caption{Histogram of the hitting time distribution for $10\,000$ simulations corresponding to the level $L=2$ and starting position $X_0=0$ (left), histogram of the number of iterations in Algorithm (A1) in the $\log_{10}$-scale (right).}\label{Fig:1}
\end{figure}
The total number of random variables used in this algorithm is of course random and the associated histogram (Figure \ref{Fig:2}) illustrates its distribution. 

 \begin{figure}[ht]
\includegraphics[width=6cm]{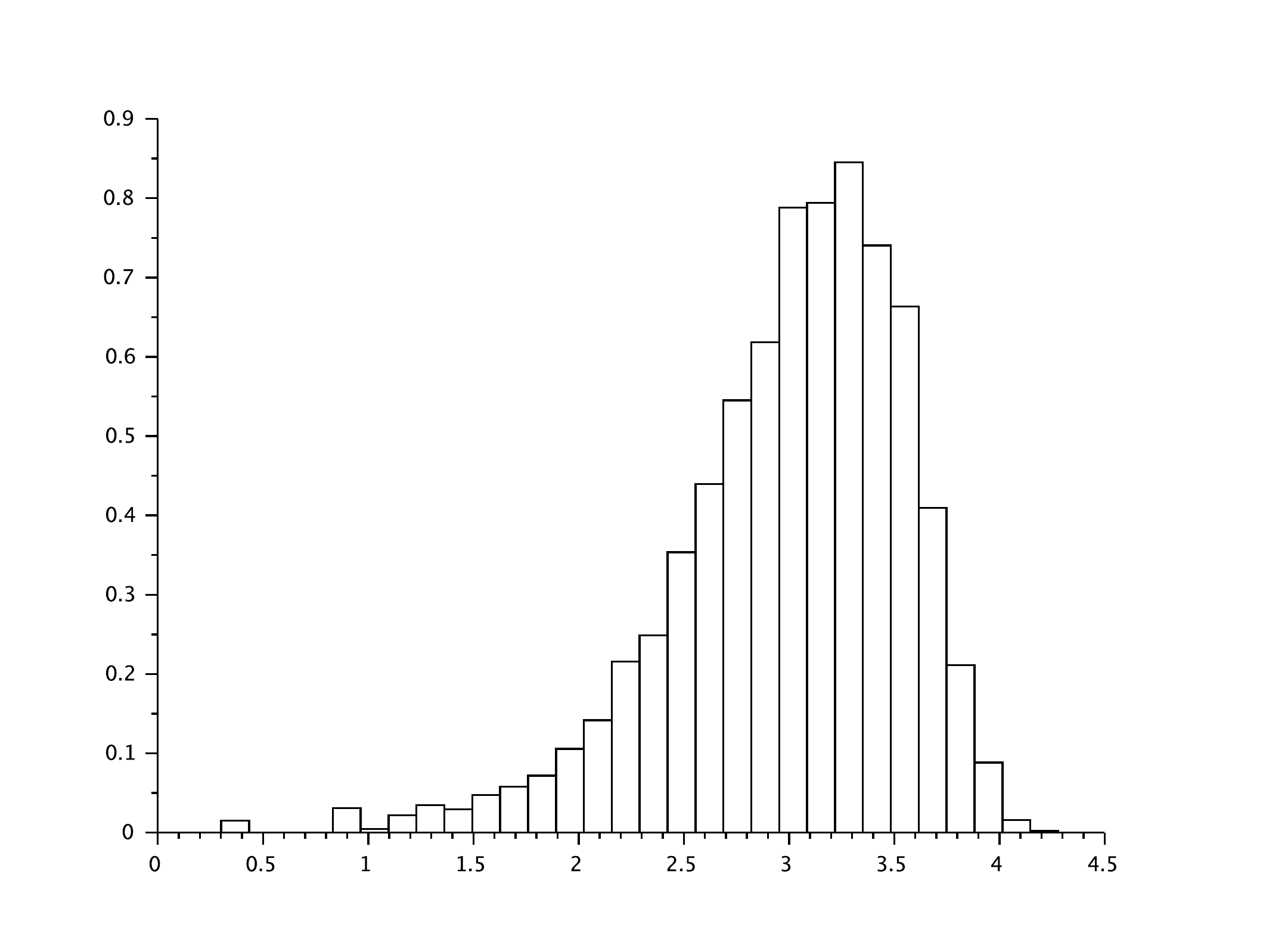}
\hfill
\includegraphics[width=6cm]{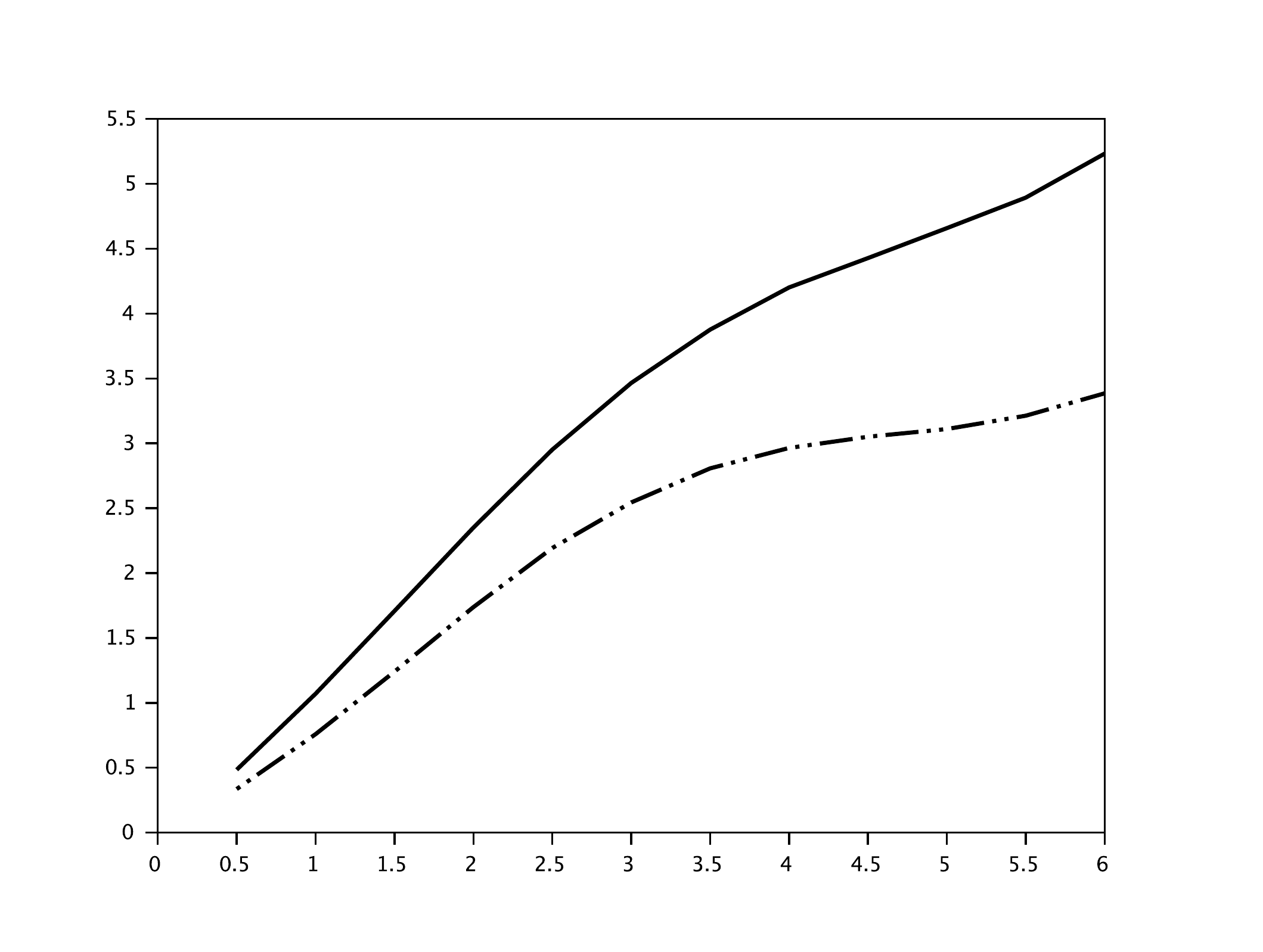}
\caption{Number of random variables used in Algorithm (A1) for $10\,000$ simulations with $L=2$, $X_0=0$ in the $\log_{10}$-scale (left) and mean number of iterations versus the level height $L$ for Algorithm ${\rm (A1)_{shift}}$ respectively \emph{(A1)} (dashed line resp. solid line), both curves are in the $\log_{10}$-scale ($10\,000$ simulations have been used for the average estimation).}
\label{Fig:2}
\end{figure}

Let us just note that Algorithm (A1) is quite time consuming since simulating $10\,000$ hitting times (corresponding to $L=2$ and $X_0=0$) requires about $300$ cpu. This time increases very fast as $L$ becomes large: the figure (right) presents the number of iterations versus $L$. In order to increase the algorithm efficiency and consequently reduce the computation time, one way is to shift the function $\gamma$ as described in Proposition \ref{prop:inv:gauss}. The function $\gamma$ introduced in \eqref{eq:ex1} satisfies \(\gamma(x)\ge \gamma_0:=1/4$.
%%Indeed the minimum of $\gamma$ is reached for $x_0$ satisfying $\cos(x_0)=\frac{\sin(x_0)}{4+2\sin(x_0)}$. Hence $\gamma(x)\ge \gamma(x_0)\ge 1/4$.  
The comparison of the efficiency of Algorithm $(A1)$ (solid curve) and Algorithm ${\rm (A1)_{shift}}$ (dashed curve) is presented in Figure \ref{Fig:2}.

\begin{figure}[h]
\includegraphics[width=6cm]{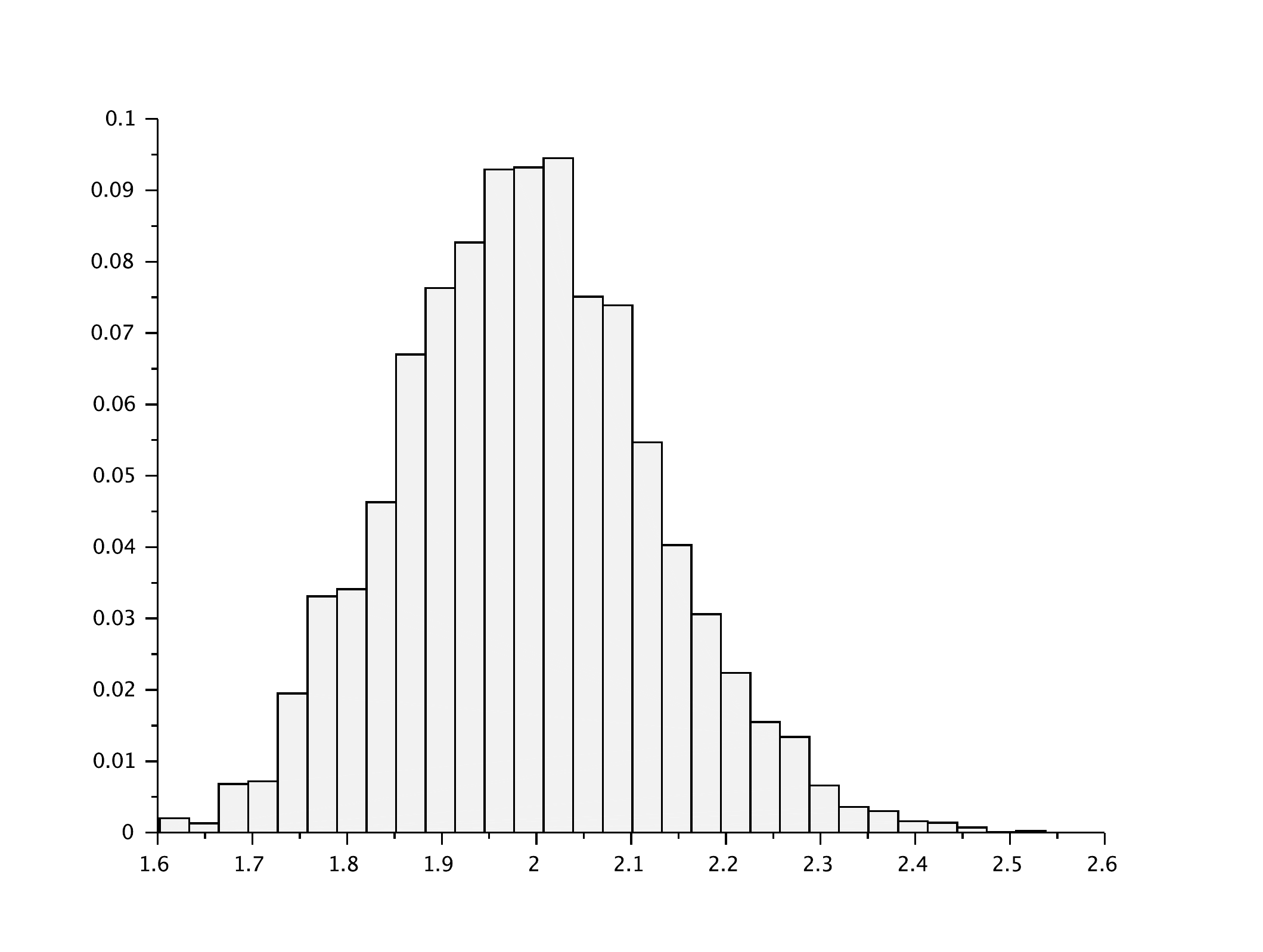}
\hfill
\includegraphics[width=6cm]{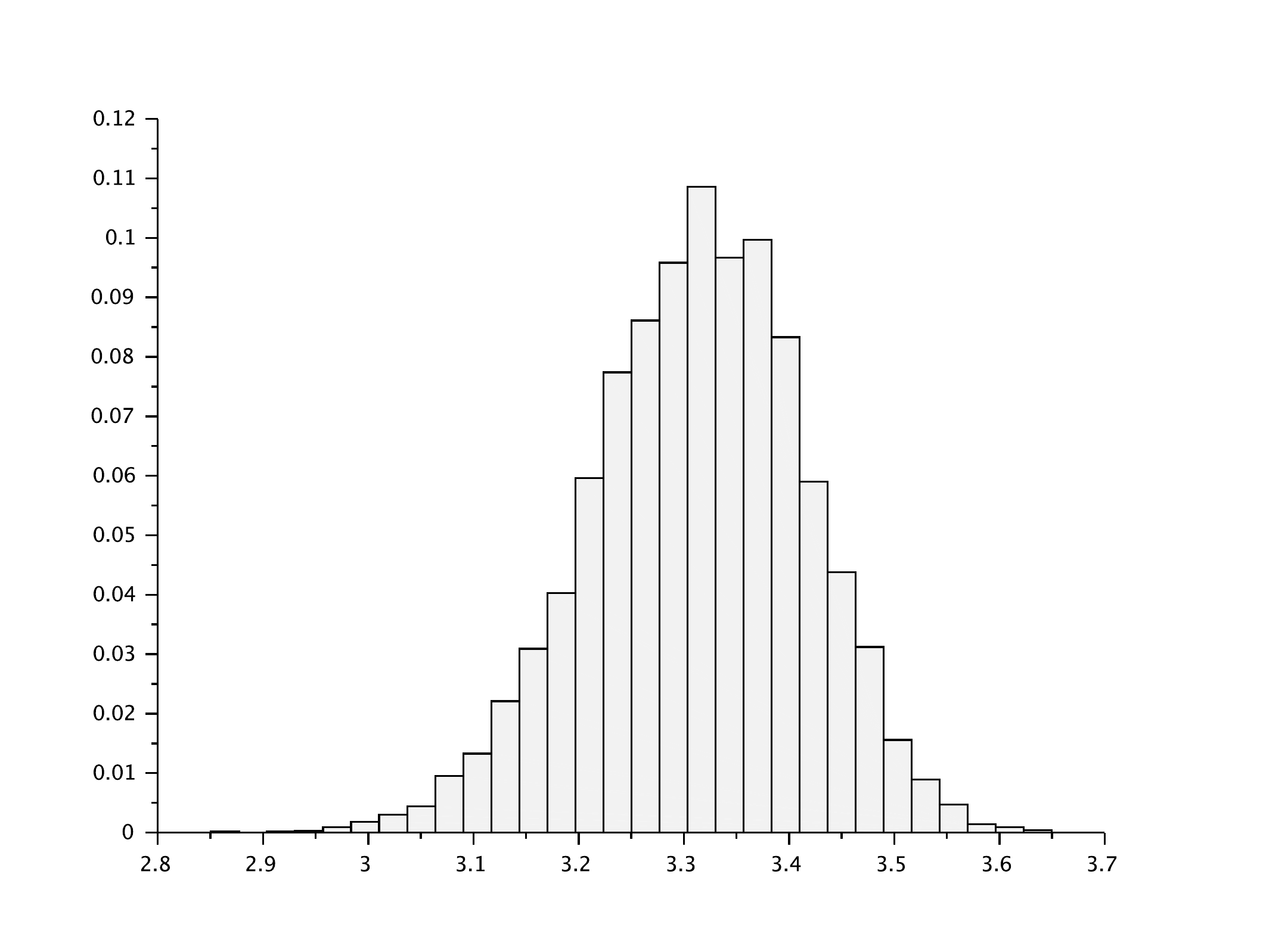}
%{histogram_Nb_iterations-10000-2-0.pdf}
\caption{Histogram of the number of random variables in Algorithm (A1) using space splitting for $10\,000$ with $L=2$, $X_0=0$, $k=20$ (left), $L=20$, $k=20$ (right) both in the $\log_{10}$-scale.}\label{Fig:4}
\end{figure}
The example \eqref{eq:ex1} also permits to illustrate the second modification of the algorithm developed in Section \ref{sec:number-ite}: the \emph{space splitting} method. Let us recall that we can replace the simulation of $\tau_L$ for a diffusion starting in $x$  (denoted here by $\tau(x\to L)$) by the simulation of $k$ independent first-passage times $\tau(x+(i-1)(L-x)/k\to x+i(L-x)/k)$ for $i=1,\ldots,k$. It suffices then to take the sum of all these quantities. 
Figure \ref{Fig:4} (left) represents the number of random variables for the particular case: $L=2$, $X_0=0$ and $k=20$. We can notice that such a procedure is slightly more efficient for this particular case: the average number of random variables (estimation with $10\,000$ simulations) namely goes from about $1791$ down to $102$. It permits also to deal with large level values (Figure \ref{Fig:4} -- right) which is a huge task with Algorithm (A1). For each level $L$ there is an optimal splitting (optimal value $k$) illustrates by Figure \ref{Fig:4bis}.

\begin{figure}[h]
%\centerline{\includegraphics[width=6cm]{Figures/FigModversusk-Nbrv-Ex1-Lev5-start0-N10000.pdf}}
\centerline{\includegraphics[width=6cm]{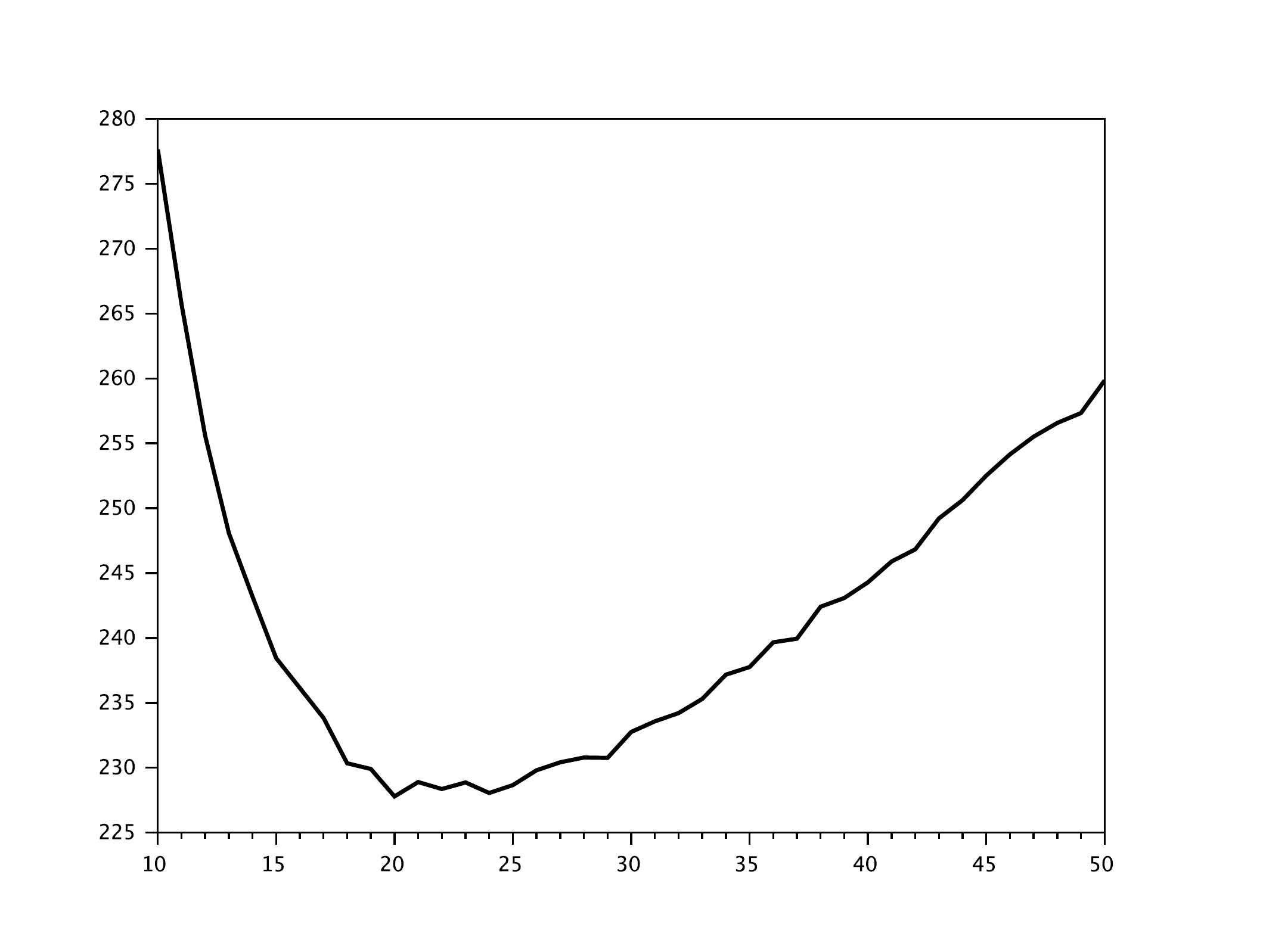}}
%\hspace*{0.1cm}
%\includegraphics[width=6cm]{}
%{histogram_Nb_iterations-10000-2-0.pdf}
\caption{Averaged number of random variables used in Algorithm (A1) versus the number of slices $k$ with $X_0=0$ and $L=5$. The averaging uses $10\,000$ simulations.}\label{Fig:4bis}
\end{figure}

Up to this point, all simulations use Algorithm (A1). Of course, it is also possible to deal with Algorithm (A2) in  such a situation. We obtain the same  hitting time distribution and can therefore compare the number of iterations and the number of random variables used. Let us notice that both Algorithms (A1) and (A2) are based on the simulation of a Poisson point process defined on the rectangle $[0,T]\times[0,\kappa]$: for one algorithm, the simulation starts with the point with smallest abscissa and then goes from one point to the next one by increasing the abscissa, for the other one, the order of simulation depends on the ordinate. It is possible of course to have a one-to-one correspondance between these two simulation technics: each point $(t,x)\in[0,T]\times[0,\kappa]$ of the Poisson process is then transformed as follows
\[
(t,x)\mapsto(xT/\kappa,t\kappa/T).
\]
Consequently the same sequence of random points can be used for both Algorithms and it becomes easy to compare the total number of random points. Let us introduce $\mathcal{N}_1$ the total number of random variables used by  Algorithm (A1) and respectively $\mathcal{N}_2$ by Algorithm (A2), then we define the difference
\begin{equation}
\label{eq:def:diff}
\Delta:=\mathcal{N}_2-\mathcal{N}_1.
\end{equation}
In the diffusion case \eqref{eq:ex1}, the estimated mean $\overline{\Delta}$ is equal to 
$-196.0$ and its standard deviation equals $1853.2$ when observing $10\,000$ simulations with $x=0$ and $L=2$. We claim therefore that Algorithm (A2) is more efficient than Algorithm (A1) for Example 1 since $\overline{\Delta}$ is significantly negative (p-value: $1.9E-26$). However this difference is small in comparison to the mean number of random variables: $\overline{\Delta}/\overline{\mathcal{N}}_1\approx - 0.11$.  In other situations, this difference can be much stronger or positive. It essentially depends on the drift term $b$ of the diffusion.  
Let us consider a second situation.

\noindent {\bf Example 2.} Let us consider the following SDE:
\begin{equation}\label{eq:ex2}
dX_t=(1+\arctan(1-X_t))\,dt+dB_t,\quad t\ge 0 \quad X_0=0,
\end{equation}
associated to the first-passage time for $L=1$. Let us note that $\tau_1<\infty$ a.s.

\begin{figure}[h]
\includegraphics[width=6cm]{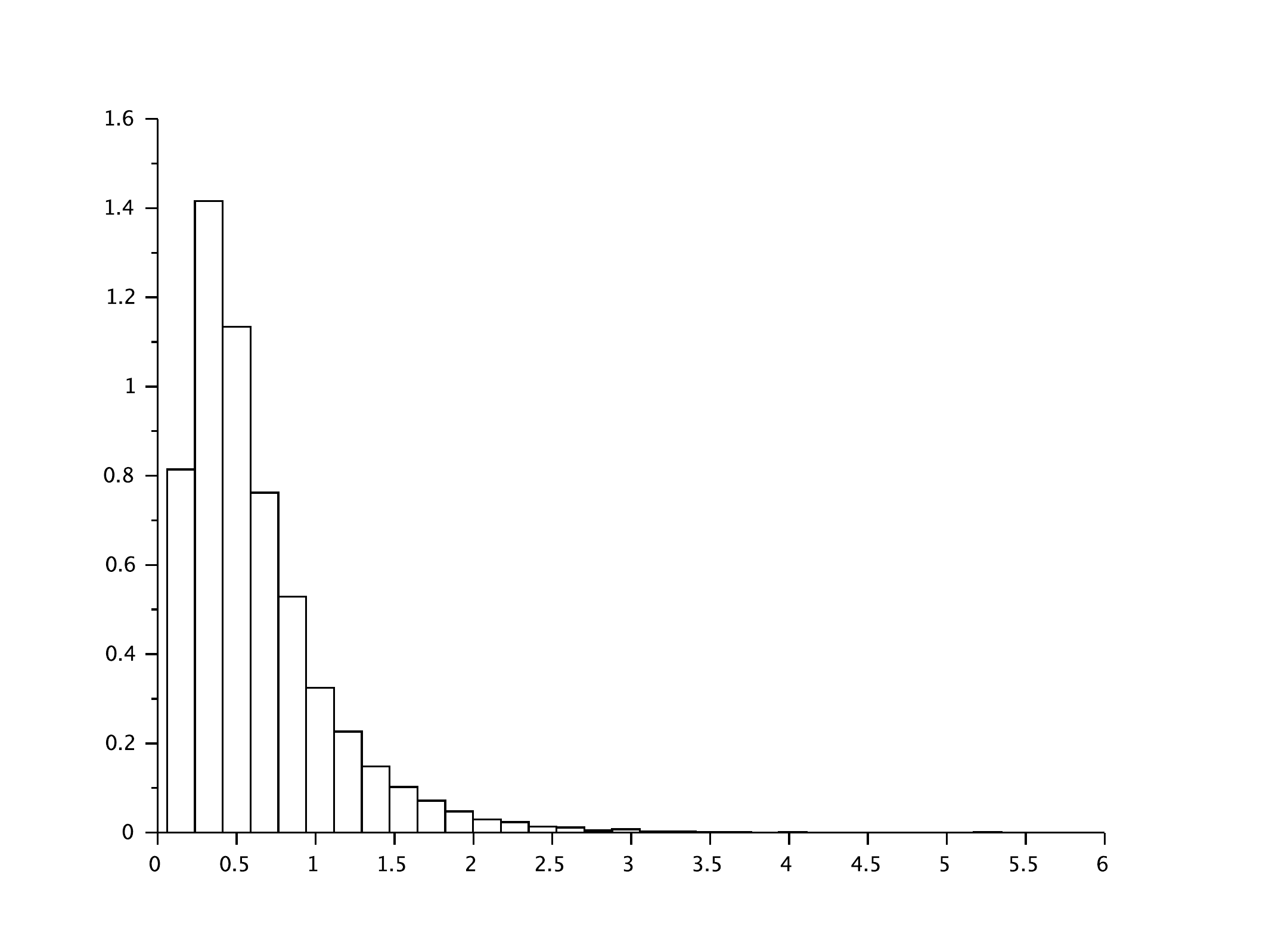}
%{histogram_hitting_time-10000-2-0.pdf}
\hfill
\includegraphics[width=6cm]{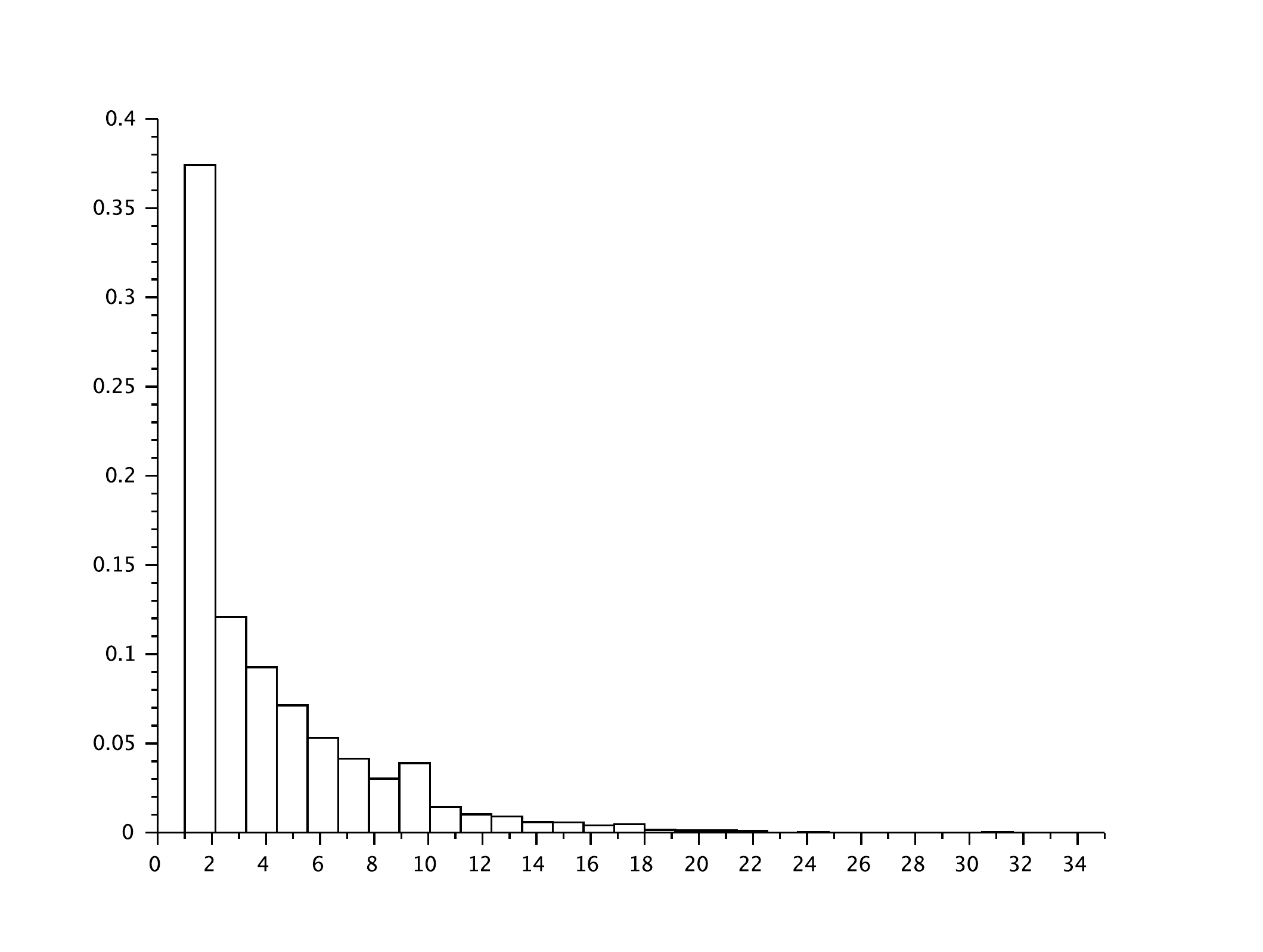}
\caption{Histogram of the hitting time distribution using Algorithm (A1) (left) and histogram of the number of iterations (right) for $X_0=0$, $L=2$ and $10\,000$ simulations.}\label{Fig:ex2_T}
\end{figure}

We observe that 
$\gamma(x)=\frac{1}{2}\left((1+\arctan(1-x))^2-\frac{1}{1+(1-x)^2}\right)$ satisfies $0\le \gamma\le \frac{1}{2}(1+\pi/2)^2$.
%We observe that $\gamma(x)=\frac{1}{2}\left(\arctan(x)^2+\frac{1}{1+x^2}\right)$ satisfies $0\le \gamma\le 1.25$.
Both Algorithms (A1) and (A2) can be used in order to simulate the first-passage time $\tau_L$ and the same kind of study is achieved: here, the estimated difference  $\overline{\Delta}=-169.7$, its standard deviation $235.2$ and finally the reduction rate $\overline{\Delta}/\overline{\mathcal{N}}_1\approx - 0.67$ for $x=0$, $L=1$ and $10\,000$ simulations. The choice between $(A1)$ and $(A2)$, in such a situation, becomes an important question (the hitting time distribution is illustrated by Figure \ref{Fig:ex2_T}).

% mean(Nb_rv2) ans  = 84.3818
% mean(Nb_rv1) ans  = 254.0572
% Delta=Nb_rv2-Nb_rv1;
% Delta_average=mean(Delta) Delta_average  = -169.6754
% Delta_stdev=stdev(Delta) Delta_stdev  =  235.17652
% Delta_average*sqrt(10000)/Delta_stdev ans  =  -72.148104
% Delta_average/mean(Nb_rv1) ans  = -0.667863

\mathversion{bold}
\subsection{Simulation of $\tau_L$ given $\tau_L\le  t_0.$}
\label{sec:num:cond}
\mathversion{normal}
In this section, we illustrate Algorithm (A3) presented in Section \ref{sec:before} which permits to overcome the condition $\gamma \ge 0$. Indeed this condition can be weaken (Assumption \ref{assu-5}) provided the study only concerns  the distribution of the first-passage time $\tau_L$ conditioned on the event $\tau_L\le t_0$. Here $t_0$ is any fixed time.

We therefore introduce the following SDE:
\begin{equation}
\label{eq:exbefore}
dX_t=-\arctan(X_t)\,dt+dB_t,\quad 0\le t\le t_0.
\end{equation}
We focus our attention to the case $X_0=0$, $L=1$ and different values for $t_0$. Let us observe that $\gamma(x)=(\arctan(x)^2-1/(1+x^2))/2$ satisfies $-1/2\le \gamma(x)\le \pi^2/8$ and that the first-passage time is almost surely finite (Assumption \ref{assu-2}). As described in the statement of Proposition \ref{prop:exten-neg}, the simulation of $\tau_L$ given $\tau_L\le t_0$ is based on a two-steps acceptance/rejection method: 
\begin{itemize}
\item In order to simulate $G^2$ given $G^2\ge (L-x)^2/t_0$ (here $G$ stands for a standard gaussian variable), we use shifted exponentially distributed random variables with an optimal (classical) rejection rule.
\item Using the acceptance/rejection procedure introduced in Algorithm (A3) we transform the distribution of $(L-x)^2/G^2$ given $(L-x)^2/G^2\le t_0$ into $\tau_L$ given $\tau_L\le t_0$.
\end{itemize}
\begin{figure}[h]
\includegraphics[width=6cm]{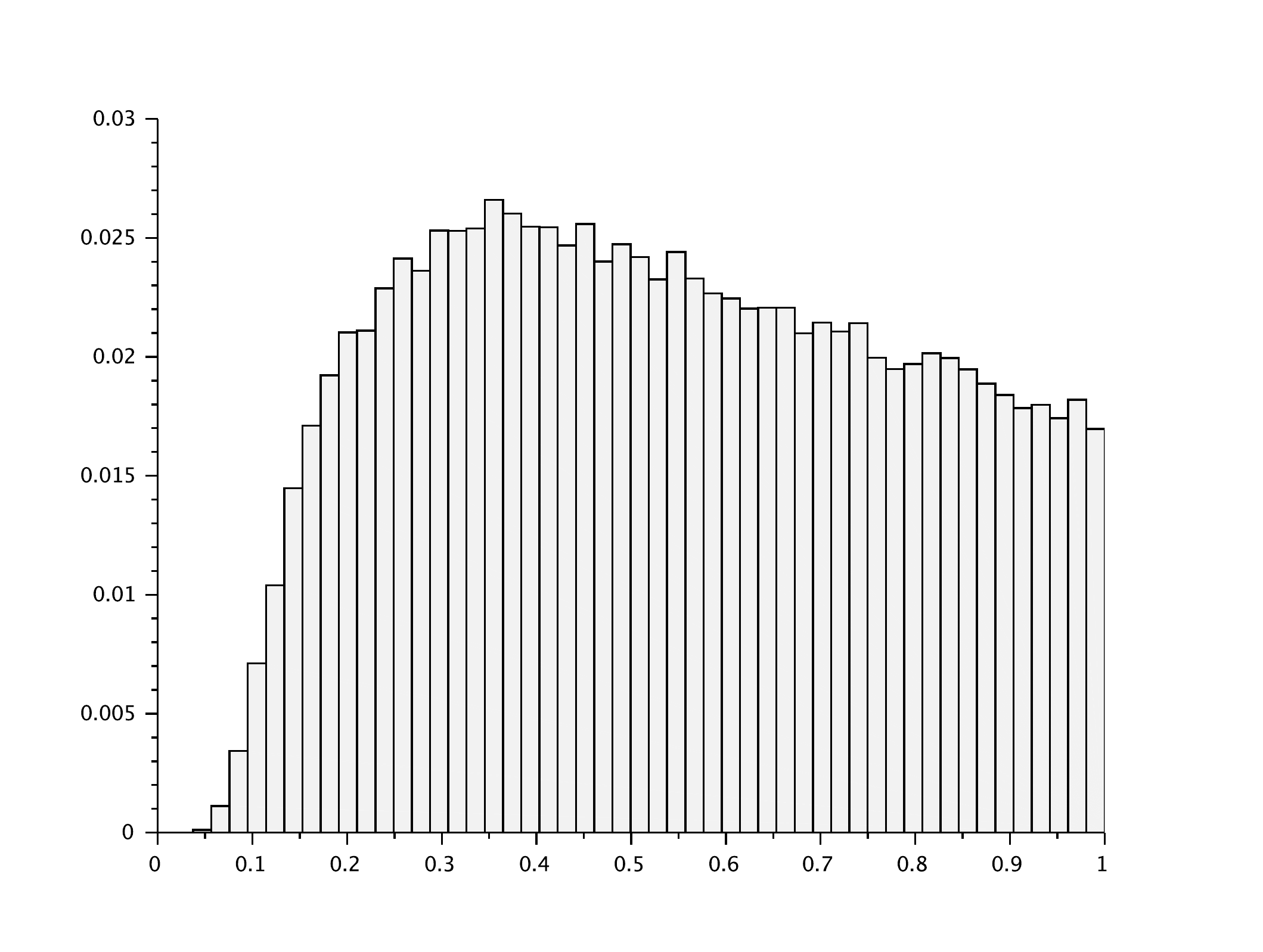}
%{histogram_hitting_time-10000-2-0.pdf}
\hfill
\includegraphics[width=6cm]{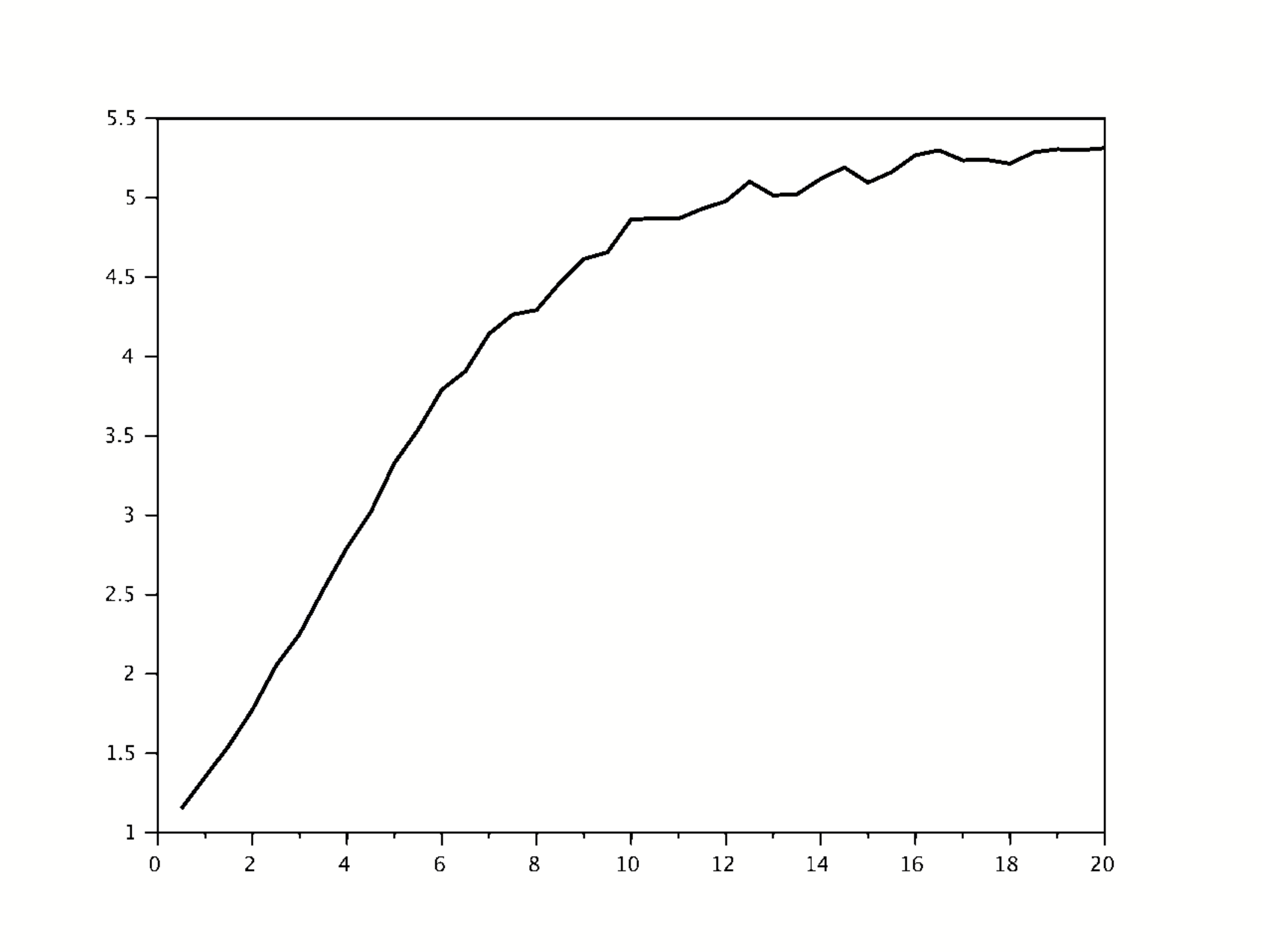}
\caption{Histogram of the hitting time distribution using Algorithm (A3) for $t_0=1$  and $100\,000$ simulations (left) and averaged number of iterations in Algorithm (A3) versus $t_0$ (right) for $X_0=0$, $L=1$ and $10\,000$ simulations.}\label{Fig:ex2_Tbis}
\end{figure}
In this illustration, we choose for the Poisson point process appearing in Algorithm (A3) the same kind of simulation as Algorithm (A1). Of course, it is also possible to adapt the procedure in order to follow the technics of Algorithm (A2) and finally to compare the efficiency of both Algorithms (we let this task to the reader).

\mathversion{bold}
\subsection{Modified Algorithm ${\rm (A1)}^\rho$}\label{sec:num-unbouned}
\mathversion{normal}

In this section we consider diffusion processes $(X_t,\ t\ge 0)$ satisfying \eqref{eq:0} with unbounded drift terms $b$. We apply the modified algorithm described in Section \ref{sec:unbounded} in order to approximate the first-passage time through the level $L$.  Let us consider as example an Ornstein-Uhlenbeck process with $b(x)=-\alpha x+\beta$ where $\alpha$ and $\beta$ are positive constants. We compute easily 
\begin{equation}
\gamma(x)=\frac{1}{2}(-\alpha x+\beta)^2-\frac{\alpha}{2},
\end{equation}
and observe that in general $\gamma$
is neither a bounded function on the whole interval $]-\infty,L]$ nor a positive one. The following choice of parameters $\alpha=0.3$, $\beta=1$ with starting position $X_0=0$ and boundary $L=1$ ensures that $\gamma$ is a positive function but $b$ remains unbounded, that's why we replace the original drift term by its modified version (\ref{eq:mod:drift}):
\begin{equation}\label{eq:mod:driftOU}
b_\rho(x)=\left\{\begin{array}{ll}
-\alpha x+\beta &\mbox{if}\ -\rho\le x\le L,\\
\alpha \rho+\beta-\alpha (x+\rho)e^{x+\rho}&\mbox{if}\ x<-\rho.
\end{array}
\right.
\end{equation}
The modified $\gamma$ satisfies $\gamma_\rho(x)=\gamma(x)$ for $x\in[-\rho,L]$ and
\begin{equation}\label{eq:mod:gammaOU}
\gamma_\rho(x)=\frac{1}{2}(\alpha \rho+\beta-\alpha (x+\rho)e^{x+\rho})^2-\frac{\alpha}{2} (1+x+\rho)e^{x+\rho}\quad \mbox{for}\ x<-\rho.
\end{equation}
The function $\gamma_\rho$ is now positive on the whole interval $]-\infty,L]$ and admits the following upper-bound: 
\[
\kappa=\frac{1}{2}\Big(\alpha\rho+\beta+\frac{\alpha}{e}\Big)^2+\frac{\alpha}{2e^2}.
\]
To obtain such an expression, it suffices to upper-bound separately both main terms of \eqref{eq:mod:gammaOU}. We can therefore apply Algorithm (A1) or Algorithm (A2) in order to simulate the approximated first-passage time $\tau_L^\rho$.

Figure \ref{Fig:mod} presents a comparison between the histograms of the hitting time distribution  ($10\,000$ independent simulations and $\rho=5$) with the approximation of the hitting time density obtained via the numerical algorithm proposed in \cite{Buonocore-1987}. We can notice that the histogram and the numerical \emph{pdf} perfectly fit. Moreover, the choice of different larger values for $\rho$ does not significantly affect the results.

\begin{figure}[h]
\centerline{\includegraphics[width=6cm]{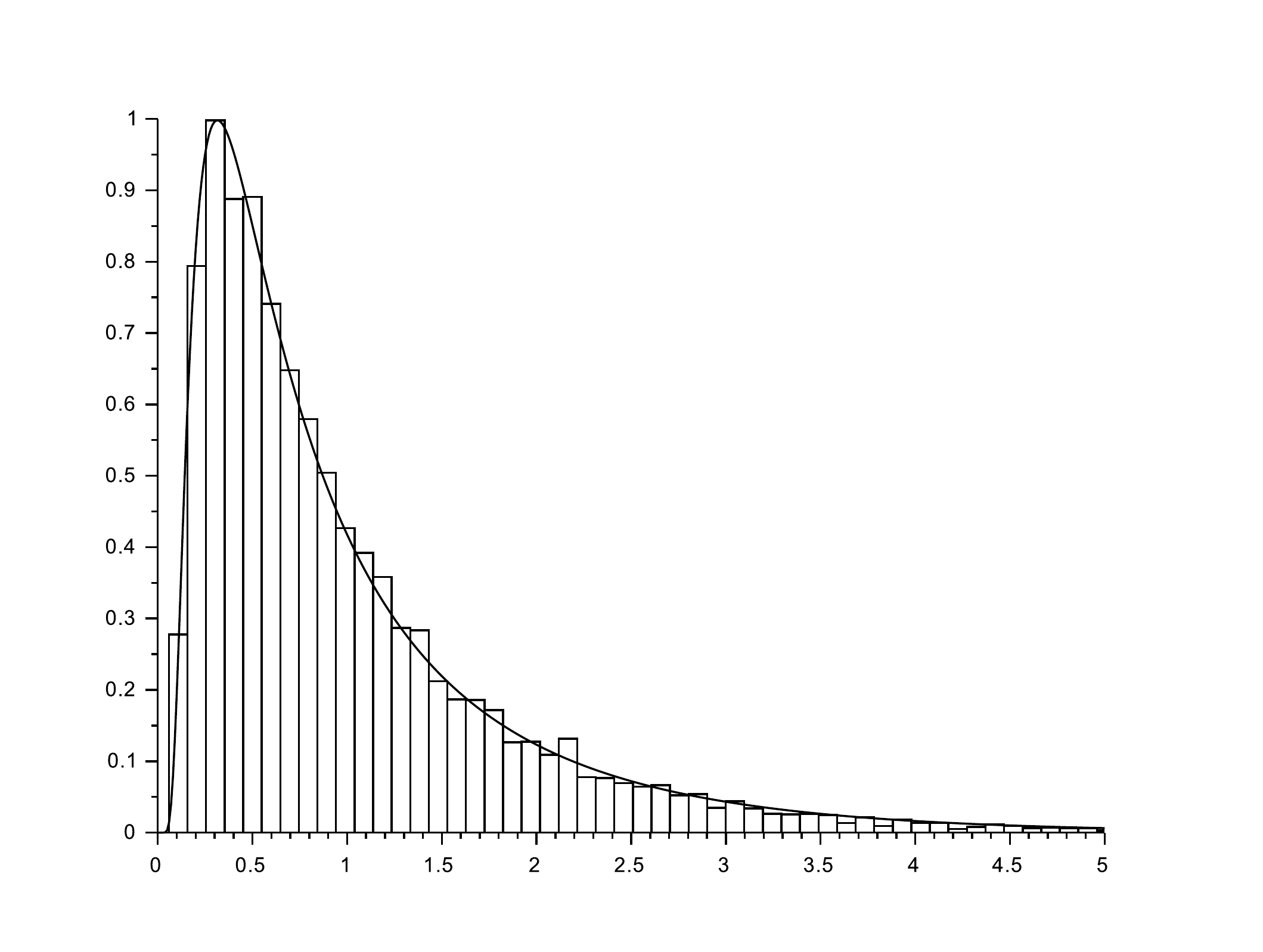}}
\caption{Histogram of the hitting time distribution for 10000 simulations corresponding to the level $L=1$, starting position $X_0=0$ and parameters $a=0.3$, $b=1$ for $\rho=5$ using Algorithm ${\rm (A1)}$ with modified drift, compared with the approximation of the hitting time density obtained via a numerical algorithm.}\label{Fig:mod}
\end{figure}

\FloatBarrier

%\bibliographystyle{plain}
%\bibliography{biblio}

\begin{thebibliography}{10}

\bibitem{Redner}
{\em A Guide to First-Passage Processes}.
\newblock Cambridge University Press, 2001.

\bibitem{Baldi-Caramellino-2002}
P.~Baldi and L.~Caramellino.
\newblock Asymptotics of hitting probabilities for general one-dimensional
  pinned diffusions.
\newblock {\em Ann. Appl. Probab.}, 12(3):1071--1095, 2002.

\bibitem{Benedetto2013}
E.~Benedetto, L.~Sacerdote, and C.~Zucca.
\newblock A first passage problem for a bivariate diffusion process: Numerical
  solution with an application to neuroscience when the process is
  gauss-markov.
\newblock 242:41--52, April 2013.

\bibitem{Beskos-2006}
A.~Beskos, O.~Papaspiliopoulos, and G.O. Roberts.
\newblock Retrospective exact simulation of diffusion sample paths with
  applications.
\newblock {\em Bernoulli}, 12(6):1077--1098, 2006.

\bibitem{Beskos-2008}
A.~Beskos, O.~Papaspiliopoulos, and G.O. Roberts.
\newblock A factorisation of diffusion measure and finite sample path
  constructions.
\newblock {\em Methodol. Comput. Appl. Probab.}, 10(1):85--104, 2008.

\bibitem{beskos2005exact}
A.~Beskos and G.O. Roberts.
\newblock Exact simulation of diffusions.
\newblock {\em The Annals of Applied Probability}, 15(4):2422--2444, 2005.

\bibitem{Broadie-Glasserman-Kou-1997}
M.~Broadie, P.~Glasserman, and S.~Kou.
\newblock A continuity correction for discrete barrier options.
\newblock {\em Math. Finance}, 7(4):325--349, 1997.

\bibitem{Buonocore-1987}
A.~Buonocore, A.G. Nobile, and L.M. Ricciardi.
\newblock A new integral equation for the evaluation of first-passage-time
  probability densities.
\newblock {\em Advances in Applied Probability}, pages 784--800, 1987.

\bibitem{Burkitt-2006}
A.N. Burkitt.
\newblock A review of the integrate-and-fire neuron model: I. homogeneous
  synaptic input.
\newblock {\em Biological cybernetics}, 95(1):1--19, 2006.

\bibitem{Devroye-1986}
L.~Devroye.
\newblock {\em Nonuniform random variate generation}.
\newblock Springer-Verlag, New York, 1986.

\bibitem{Durbin-1985}
J.~Durbin.
\newblock The first-passage density of a continuous {G}aussian process to a
  general boundary.
\newblock {\em J. Appl. Probab.}, 22(1):99--122, 1985.

\bibitem{Durbin-1992}
J.~Durbin.
\newblock The first-passage density of the {B}rownian motion process to a
  curved boundary.
\newblock {\em J. Appl. Probab.}, 29(2):291--304, 1992.
\newblock With an appendix by D. Williams.

\bibitem{Ferebee-1983}
B.~Ferebee.
\newblock An asymptotic expansion for one-sided {B}rownian exit densities.
\newblock {\em Z. Wahrsch. Verw. Gebiete}, 63(1):1--15, 1983.

\bibitem{Giorno-al-1989}
V.~Giorno, A.~G. Nobile, L.~M. Ricciardi, and S.~Sato.
\newblock On the evaluation of first-passage-time probability densities via
  nonsingular integral equations.
\newblock {\em Adv. in Appl. Probab.}, 21(1):20--36, 1989.

\bibitem{Giraudo-Sacerdote-1998}
M.T. Giraudo and L.~Sacerdote.
\newblock Simulation methods in neuronal modeling.
\newblock {\em BioSystems}, 48:77--83, 1998.

\bibitem{Giraudo-Sacerdote-1999}
M.T. Giraudo and L.~Sacerdote.
\newblock An improved technique for the simulation of first passage times for
  diffusion processes.
\newblock {\em Comm. Statist. Simulation Comput.}, 28(4):1135--1163, 1999.

\bibitem{Giraudo-al-2001}
M.T. Giraudo, L.~Sacerdote, and C.~Zucca.
\newblock A {M}onte {C}arlo method for the simulation of first passage times of
  diffusion processes.
\newblock {\em Methodol. Comput. Appl. Probab.}, 3(2):215--231, 2001.

\bibitem{Gobet-2000}
E.~Gobet.
\newblock Weak approximation of killed diffusion using {E}uler schemes.
\newblock {\em Stochastic Process. Appl.}, 87(2):167--197, 2000.

\bibitem{Gobet-Menozzi-10}
E.~Gobet and S.~Menozzi.
\newblock Stopped diffusion processes: boundary corrections and overshoot.
\newblock {\em Stochastic Process. Appl.}, 120(2):130--162, 2010.

\bibitem{Herrmann-Tanre-2016}
S.~Herrmann and E.~Tanr{\'e}.
\newblock The first-passage time of the {B}rownian motion to a curved boundary:
  an algorithmic approach.
\newblock {\em SIAM J. Sci. Comput.}, 38(1):A196--A215, 2016.

\bibitem{Hu2012}
Q.~Hu, Y.~Wang, and X.~Yang.
\newblock {The Hitting Time Density for a Reflected Brownian Motion}.
\newblock {\em Computational Economics}, 40(1):1--18, 2012.

\bibitem{ichiba2011efficient}
T.~Ichiba and C.~Kardaras.
\newblock Efficient estimation of one-dimensional diffusion first passage time
  densities via monte carlo simulation.
\newblock {\em Journal of Applied Probability}, pages 699--712, 2011.

\bibitem{Janssen}
J.~Janssen, O.~Manca, and R.~Manca.
\newblock {\em Applied Diffusion Processes from Engineering to Finance}.
\newblock Wiley, 2013.

\bibitem{Jeanblanc-2009}
M.~Jeanblanc, M.~Yor, and M.~Chesney.
\newblock {\em Mathematical methods for financial markets}.
\newblock Springer Finance. Springer-Verlag London, Ltd., London, 2009.

\bibitem{Jenkins}
P.~A. {Jenkins}.
\newblock {Exact simulation of the sample paths of a diffusion with a finite
  entrance boundary}.
\newblock {\em ArXiv e-prints}, November 2013.

\bibitem{K-S}
I.~Karatzas and S.E. Shreve.
\newblock {\em Brownian motion and stochastic calculus}, volume 113 of {\em
  Graduate Texts in Mathematics}.
\newblock Springer-Verlag, New York, second edition, 1991.

\bibitem{kent}
J.~Kent.
\newblock Some probabilistic properties of {B}essel functions.
\newblock {\em Ann. Probab.}, 6(5):760--770, 1978.

\bibitem{Linetsky}
V.~Linetsky.
\newblock Lookback options and diffusion hitting times: A spectral expansion
  approach.
\newblock {\em Finance Stoch.}, 8:373–398, 2004.

\bibitem{Navarro}
D.J. Navarro and I.G. Fuss.
\newblock Fast and accurate calculations for first-passage times in wiener
  diffusion models.
\newblock 53:222--230, 2009.

\bibitem{Pieper}
V.~Pieper, M.~Dominé, and P.~Kurth.
\newblock Level crossing problems and drift reliability.
\newblock {\em Math. Methods Oper. Res.}, 45:347–--354, 1997.

\bibitem{pitman-yor}
J.~Pitman and M.~Yor.
\newblock The law of the maximum of a {B}essel bridge.
\newblock {\em Electron. J. Probab.}, 4:no.\ 15, 35 pp.\, 1999.

\bibitem{Wang-Potzelberger-2001}
K.~P\"otzelberger and L.~Wang.
\newblock Boundary crossing probability for {B}rownian motion.
\newblock {\em J. Appl. Probab.}, 38(1):152--164, 2001.

\bibitem{Ricciardi-al-1984}
L.~M. Ricciardi, L.~Sacerdote, and S.~Sato.
\newblock On an integral equation for first-passage-time probability densities.
\newblock {\em J. Appl. Probab.}, 21(2):302--314, 1984.

\bibitem{Sacerdote-2013}
L.~Sacerdote and M.T. Giraudo.
\newblock Stochastic integrate and fire models: a review on mathematical
  methods and their applications.
\newblock In {\em Stochastic Biomathematical Models}, pages 99--148. Springer,
  2013.

\bibitem{SacTambZuc}
L.~Sacerdote, M.~Tamborrino, and C.~Zucca.
\newblock First passage times of two-dimensional correlated processes:
  Analytical results for the wiener process and a numerical method for
  diffusion processes.
\newblock {\em Journal of Computation and Applied Mathematics}, 2016.

\bibitem{Sacerdote-Tomassetti-1996}
L.~Sacerdote and F.~Tomassetti.
\newblock On evaluations and asymptotic approximations of first-passage-time
  probabilities.
\newblock {\em Adv. in Appl. Probab.}, 28(1):270--284, 1996.

\bibitem{Wang-Potzelberger-1997}
L.~Wang and K.~P\"otzelberger.
\newblock Boundary crossing probability for {B}rownian motion and general
  boundaries.
\newblock {\em J. Appl. Probab.}, 34(1):54--65, 1997.

\bibitem{Wang-Potzelberger-2007}
L.~Wang and K.~P\"otzelberger.
\newblock Crossing probabilities for diffusion processes with piecewise
  continuous boundaries.
\newblock {\em Methodol. Comput. Appl. Probab.}, 9(1):21--40, 2007.

\bibitem{Williams-1974}
D.~Williams.
\newblock Path decomposition and continuity of local time for one-dimensional
  diffusions. {I}.
\newblock {\em Proc. London Math. Soc. (3)}, 28:738--768, 1974.

\bibitem{ZucTav}
C.~Zucca and P.~Tavella.
\newblock The clock model and its relationship with the allan and related
  variances.
\newblock 52(2):289--296, 2005.

\bibitem{ZucTavPes}
C.~Zucca, P.~Tavella, and G.~Peskir.
\newblock {Detecting atomic clock frequency trends using an optimal stopping
  method}.
\newblock 53(3):S89--S95, 2016.

\end{thebibliography}

\end{document}